\newtheorem{Teo}{Theorem}[section]
\newtheorem{Prop}[Teo]{Proposition}
\newtheorem{Lema}[Teo]{Lemma}
\newtheorem{Cor}[Teo]{Corollary}
\newtheorem{Def}[Teo]{Definition}
\newtheorem{Obs}[Teo]{Remark}
\newtheorem{Exa}[Teo]{Example}
\newcommand{\Q}{\mathbb{Q}}
\newcommand{\R}{\mathbb{R}}
\newcommand{\Z}{\mathbb{Z}}
\newcommand{\N}{\mathbb{N}}
\newcommand{\F}{\mathbb{F}}
\newcommand{\Llr}{\Longleftrightarrow}
\newcommand{\lra}{\longrightarrow}
\newcommand{\VR}{\mathcal{O}}
\newcommand{\MI}{\mathfrak{m}}
\newcommand{\supp}{\mbox{\rm supp}}
\newcommand{\SU}{\mbox{\rm supp}}
\newcommand{\Appr}{\boldsymbol{ \rm A}}
\newcommand{\apprv}{ {\rm appr}_\nu(x,K)}
\begin{document}
\title{Parametrizations of subsets of the space of valuations}

\author{Josnei Novacoski}
\address{Departamento de Matem\'{a}tica,         Universidade Federal de S\~ao Carlos, Rod. Washington Luís, 235, 13565--905, S\~ao Carlos -SP, Brazil}
\email{josnei@ufscar.br}

\author{Caio Henrique Silva de Souza}
\address{Departamento de Matem\'{a}tica,         Universidade Federal de S\~ao Carlos, Rod. Washington Luís, 235, 13565--905, S\~ao Carlos -SP, Brazil}
\email{caiohss@estudante.ufscar.br}

\thanks{During the realization of this project the authors were supported by a grant from Funda\c c\~ao de Amparo \`a Pesquisa do Estado de S\~ao Paulo (process numbers 2017/17835-9 and 2021/13531-0).}

\begin{abstract} 
In this paper we present different ways to parametrize subsets of the space of valuations on $K[x]$ extending a given valuation on $K$. We discuss the methods using pseudo-Cauchy sequences and approximation types. The method presented here is slightly different than the ones in the literature and we believe that our approach is more accurate.
\end{abstract}

%\keywords{Key polynomials, graded algebras, MacLane-Vaqui\'e key polynomials, abstract key polynomials}
\subjclass[2010]{Primary 13A18}

\maketitle

\section{Introduction}
In this paper we present the relation between \textit{pseudo-Cauchy sequences}, \textit{approximation types} and \textit{extensions of valuations}. More precisely, for a valued field $(K,v)$, we define the sets $\mathbb S$ of pseudo-Cauchy sequences in $K$, $\mathbb{A}$ of approximation types over $K$ and $\mathbb V$ of classes of valuations on $K[x]$ whose restriction to $K$ is equivalent to $v$. Then we present natural maps
\[
\Psi: \mathbb S\lra \mathbb V, \Phi: \mathbb A\lra \mathbb V\mbox{ and }\iota: \mathbb S\lra \mathbb A
\]
such that $\Phi\circ\iota=\Psi$.

Pseudo-Cauchy sequences (also called pseudo-convergent sequences) were introduced by Ostrowski (in \cite{Ost}) and Kaplansky (in \cite{Kapl}) in order to understand the possible extensions of $v$ to simple extensions (both algebraic and transcendental) of $K$. However, different pseudo-Cauchy sequences can define the same extension. Another type of object used for the same purpose, studied by Kuhlmann in \cite{KuhlmannApprTypes}, are approximation types. One advantage of these objects is that different approximation types define different valuations.

In this paper we review the objects mentioned above. However, we propose a slightly different approach. The approaches by Kaplansky and Kuhlmann allow us to present this map only in particular cases (for instance, if $K$ is algebraically closed). However, neither of these objects are enough to parametrize all the valuations on $\mathbb V$ (in the general case).

By Mac Lane and Vaquié's approach (\cite{MacLane}, \cite{Vaq} and \cite{Vaq2}), every valuation $\mu$ on $K[x]$ can be obtained by a \textit{chain of augmentations} of one of the following forms:
\begin{description}
\item[(a)] $\mu_0\lra \mu_1\lra\ldots\lra\mu_n= \mu$,
\item[(b)] $\mu_0\lra \mu_1\lra\ldots\lra\mu_n\lra \mu$, or
\item[(c)] $\mu_0\lra \mu_1\lra\ldots\lra\mu_n\lra \ldots\lra \mu$.
\end{description}
For more details on the above notation and results, we recommend \cite[Section 5]{NartTree}. The valuations that can be obtained using Kaplansky's or Kuhlmann's methods are those of the forms
\[
\mu_0\lra \mu\mbox{ (i.e., of type \textbf{(b)} and }n=0)
\]
or
\[
\mu_0\lra \mu_1=\mu\mbox{ (i.e., of type \textbf{(a)} and }n=1)
\]
where $\mu_0\lra \mu_1$ is a limit augmentation of a suitable form.

One recent method, presented in \cite[Theorem 3.2]{NartJosnei}, allows us to parametrize all the valuations on $\mathbb V$ by ``geometric objects". More precisely, they consider a parametrization of $\mathbb{V}$, which is a bijection, by the set $\mathbb D$ of equivalence classes of \textit{nests of discoids}. One can embed $\mathbb A$ into $\mathbb D$ in a natural way. The map $\Phi$ defined here can be seen as the restriction of the isomorphism $\mathbb D\lra \mathbb V$ to $\mathbb A$.

Another important approach to parametrize a large subset of $\mathbb V$ is presented in \cite{Per}. There the authors define the concept of \textit{pseudo-monotone sequences}, which generalizes the concept of pseudo-Cauchy sequences. With this approach, they present an association, similar to our map $\Psi$, that parametrizes every valuation on $\mathbb V$ when the completion of $K$ is algebraically closed.

The main tool used here are increasing sequences of valuations. There exists a group extension $\Lambda$ of the value group of $v$ for which every class in $\mathbb V$ contains a valuation $\nu:K[x]\lra \Lambda_\infty$, extending $v$. If we consider the set $\mathcal V$ of all such valuations, then we can consider an order on $\mathcal V$ by setting
\[
\nu\leq \mu \Llr \nu(f)\leq \mu(f)\mbox{ for every }f\in K[x].
\]
Then we can consider increasing families of valuations in $\mathcal V$ and study their suprema. The maps we present here are obtained by simply taking the supremum over a naturally obtained increasing family in $\mathcal V$.

Each element of $\mathbb V$ is exclusively of one of the following types: \textit{non-trivial support}, \textit{residue-transcendental}, \textit{value-transcendental} or \textit{valuation-algebraic} (see Section \ref{Preli}). We denote by $\mathbb V_{\rm nt}$, $\mathbb V_{\rm rt}$, $\mathbb V_{\rm vt}$ and $\mathbb V_{\rm al}$ the respective subsets of $\mathbb V$. The valuations with non-trivial support correspond to extensions of $v$ to simple algebraic extensions of $K$. Kaplansky's method allows us to associate pseudo-Cauchy sequences to extensions of $v$. For this purpose, he distinguishes between sequences of \textit{transcendental} or \textit{algebraic} types. The sequences of transcendental type are associated to valuations in $\mathbb{V}_{\rm al}$ and the sequences of algebraic type to valuations in $\mathbb V_{\rm nt}$ (i.e., extensions to algebraic extensions).

Consider
\[
{\rm Irr}(K)=\{f\in K[x]\mid f\mbox{ is monic and irreducible}\}.
\]
There exists a bijection between $\mathbb V_{\rm nt}$ and ${\rm Irr}(K^h)$ where $K^h$ denotes the \textit{henselization} of $K$ (see for instance \cite[Theorem 3.3]{NartTree}). In particular, if $K$ is algebraically closed, then there exists a bijection between these sets and $K$. However, if $K$ is not algebraically closed, then this set can be very large, so one cannot expect to parametrize every valuation in $\mathbb V_{\rm nt}$ by a pseudo-Cauchy sequence.

The maps presented here coincide with Kaplansky's in some cases, for instance for sequences of transcendental type. However, for some pseudo-Cauchy sequences of algebraic type, the valuations obtained are distinct. We list a few differences between these two approaches:
\begin{enumerate}
\item The map presented here is ``canonical" and Kaplansky's is not. More precisely, for a pseudo-Cauchy sequence of algebraic type, the extension by Kaplansky depends on a choice of a polynomial of smallest degree \textit{not fixed} by it.

\item The map presented here is defined in a unified way. Indeed, in all the cases, the map is obtained by taking the supremum of a naturally defined increasing family of valuations.

\item  The map $\Psi$ presented here allows us to naturally divide $\mathbb S$ into subsets
\[
\mathbb S=\mathbb S_{\rm nt}\sqcup\mathbb S_{\rm rt}\sqcup\mathbb S_{\rm vt}\sqcup\mathbb S_{\rm al}
\]
whose images lie in the respective subsets of $\mathbb V$.

\item We extend the definition of pseudo-Cauchy sequences so that we can also obtain \textit{depth-zero} valuations, i.e. of type \textbf{(a)} and $n=0$.

\item In the case where $K$ is algebraically closed, our map coincides with the one by Kaplansky.
\end{enumerate}  

The method used by Kuhlmann is very similar to the one by Kaplansky. Hence, all the differences mentioned above also apply to the map $\Phi$. Moreover, the definition of approximation type presented here does not coincide with Kuhlmann's. More precisely, in his approach, the \textit{balls} appearing in the approximation type can be both open or closed. In our definition, only closed balls are considered. To each closed ball (of the form $B(a,\gamma)$ as in Section \ref{constructuon}) we associate a depth-zero valuation (namely $v_{a,\gamma}$ as in Section \ref{constructuon}) and take the supremum over these valuations. If $K$ is algebraically closed, then by \cite[Lemma 6.2]{Andrei} we have
\[
v_{a,\gamma}(f)=\min_{b\in B(a,\gamma)}f(b)\mbox{ for every }f\in K[x].
\]
In particular, open balls do not play a role in this association.
 
This paper is divided as follows. In Section \ref{Preli} we present some basic results about valuations. In Section \ref{SecV}, we review the construction and properties of the universal group $\Lambda$. In Section \ref{Secincreafamil} we review general properties of increasing families of valuations. Finally, the maps $\Psi$ and $\Phi$ are introduced in Sections \ref{PseudoCauchySeq} and \ref{approcxsec}. In these sections we prove their main properties.
\par\medskip
%\textbf{Acknowledgements.} We would like to thank the anonymous referee for carefully reading, providing useful suggestions and pointing out a few mistakes in an earlier version of this paper.

\section{Preliminaries}\label{Preli}

%definição valorização, notação

\begin{Def}
For a commutative ring $R$, a \index{Valuation}\textbf{valuation} on $R$ is a map
\[
\nu:R\lra \Gamma_\infty :=\Gamma \cup\{\infty\}
\]
where $\Gamma$ is a totally ordered abelian group (and the extension of addition and order to $\infty$ is done in the natural way), with the following properties:
\begin{description}
	\item[(V1)] $\nu(ab)=\nu(a)+\nu(b)$ for all $a,b\in R$.
	\item[(V2)] $\nu(a+b)\geq \min\{\nu(a),\nu(b)\}$ for all $a,b\in R$.
	\item[(V3)] $\nu(1)=0$ and $\nu(0)=\infty$.
\end{description}
\end{Def}

\vspace{0.2cm} 	

Let $\nu: R \lra\Gamma_\infty$ be a valuation. The set
\[
\supp(\nu)=\{a\in R\mid \nu(a )=\infty\}
\]
is called the \textbf{support} of $\nu$. The \textbf{value group} of $\nu$ is the subgroup of $\Gamma$ generated by $\{\nu(a)\mid a \in R\setminus \supp(\nu) \}$ and is denoted by $\nu R$ or $\Gamma_\nu$. A valuation $\nu$ is a \index{Valuation!Krull}\textbf{Krull valuation} if $\supp(\nu)=\{0\}$.  If $\nu$ is a Krull valuation, then $R$ is a domain and we can extend $\nu$ to $ K={\rm Quot}(R)$ on the usual way. In this case, 
 define the \textbf{valuation ring} as $\VR_\nu:=\{ a\in K\mid \nu(a)\geq 0 \}$. The ring $\VR_\nu$ is a local ring with unique maximal ideal $\MI_\nu:=\{a\in K\mid \nu(a)>0 \}. $ We define the \textbf{residue field} of $\nu$ to be the field $\VR_\nu/\MI_\nu$ and denote it by $ k_\nu$. The image of $a\in \VR_\nu$ in $ k_\nu$ is denoted by $a\nu$. 
 
 \vspace{0.2cm} 	

\begin{Obs}Take  a valuation $\nu$ on a field $ K$ and  a valuation  $\overline{\nu}$ on $\overline{ K}$, the algebraic closure of $ K$, such that $\overline{\nu}|_{ K}=\nu$. Then $\overline{\nu}\overline{ K}$ is a divisible group. Additionally, $\overline{\nu}\overline{ K}=\nu K\otimes_\Z \Q$ (see \cite{Eng}, p.79). The group $\nu K\otimes_\Z \Q$ is called the \textbf{divisible closure} of $\nu K$ and is also denoted by $(\Gamma_{\nu})_{\Q}$. It is known that $k_{\overline{\nu}} $ is the algebraic closure of $k_\nu$ (see \cite{Eng}, p.66).

\end{Obs}

Take
$\mu: R\longrightarrow \Lambda_\infty \text{ and } \mu': R\longrightarrow \Lambda^{'}_\infty  $ two
valuations on a ring $R$. We say that $\mu$ and $\mu'$ are \textbf{equivalent} if there is an order-preserving isomorphism $\tau: \mu R \lra \mu'R$ such that $\tau \circ \mu =\mu'$. 

\subsection{Constructing valuations}\label{constructuon}
Take $(K,v)$ a valued field. For simplicity, we will denote its residue field by $k$ and its value group by $\Gamma_v$. We will describe a universal way to build new valuations on $K[x]$, extending $v$. This method was used implicitly in many important results about valuations (for instance, in \cite{Kapl}, \cite{MacLane} and \cite{Vaq}).

For a given $d\in\N$ we define
\[
K[x]_d:=\{f(x)\in K[x]\mid\deg(f)<d\}.
\]
Consider the following situation.
\begin{equation}                           \label{sit}
\left\{\begin{array}{ll}
\mu & \mbox{is a map from }K[x]_d\mbox{ to }\Gamma_\infty\mbox{ for some group }\Gamma\\
q & \mbox{is a polynomial on }K[x]\mbox{ of degree }d\\
\gamma & \mbox{is an element in a group extension }\Gamma'\mbox{ of }\Gamma
\end{array}\right. .
\end{equation}
For every polynomial $f\in K[x]$ we can write uniquely
\[
f=f_0+f_1q+\ldots+f_rq^r\mbox{ with }f_i\in K[x]_d\mbox{ for every }i, 0\leq i\leq r.
\]
This expression is called the \textbf{$q$-expansion of $f$}. Under the situation \eqref{sit}, we can define a map
\[
\mu_{q,\gamma}:K[x]\lra \Gamma'_\infty
\]
by
\[
\mu_{q,\gamma}\left(f_0+f_1q+\ldots+f_rq^r\right)=\min_{0\leq i\leq r}\{\mu(f_i)+i\gamma\}.
\]
We want to impose conditions on $\mu$, $q$ and $\gamma$ so that $\mu_{q,\gamma}$ is a valuation. For this purpose we present a variation of conditions \textbf{(V1)} and \textbf{(V2)} of valuations.

Let $S$ be a subset of a ring $R$, $\Gamma$ an ordered abelian group and $\eta:S\lra \Gamma_\infty$ any map. We say that $\eta$ satisfies \textbf{(V1')}  if for every $f,g\in S$, if $fg\in S$, then
	\[
	\eta(fg)=\eta(f)+\eta(g).
	\]
We say that $\eta$ satisfies \textbf{(V2')}  if for every $f,g\in S$, if $f+g\in S$, then
	\[
	\eta(f+g)\geq\min\{\eta(f),\eta(g)\}.
	\]

\begin{Lema}\cite[Lemma 2.3]{josneimonomial}\label{Lemasobreextsvelidesitra}
Under the situation \eqref{sit}, if $\mu$ satisfies \textbf{(V2')}, then $\mu_{q,\gamma}$ also satisfies \textbf{(V2')}.
\end{Lema}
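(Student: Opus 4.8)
The plan is to verify the inequality \textbf{(V2')} for $\mu_{q,\gamma}$ directly from its definition via $q$-expansions, reducing everything to the assumption that $\mu$ satisfies \textbf{(V2')} on $K[x]_d$. First I would fix two polynomials $f,g\in K[x]$ with $q$-expansions $f=\sum_{i=0}^r f_iq^i$ and $g=\sum_{i=0}^s g_iq^i$, where all $f_i,g_i\in K[x]_d$. Setting $N=\max\{r,s\}$ and padding with zero coefficients, I would write $f+g=\sum_{i=0}^N (f_i+g_i)q^i$. The key observation is that, since each $f_i+g_i$ still lies in $K[x]_d$ (as $K[x]_d$ is closed under addition), this is precisely the $q$-expansion of $f+g$, so no further reduction is needed — this is the point where working with the \emph{sum} rather than a product keeps things clean, and it is why the lemma only concerns \textbf{(V2')}, not \textbf{(V1')}.

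Next I would compute, directly from the definition of $\mu_{q,\gamma}$,
\[
\mu_{q,\gamma}(f+g)=\min_{0\leq i\leq N}\{\mu(f_i+g_i)+i\gamma\}.
\]
For each index $i$, since $f_i,g_i,f_i+g_i\in K[x]_d$ and $\mu$ satisfies \textbf{(V2')}, we have $\mu(f_i+g_i)\geq\min\{\mu(f_i),\mu(g_i)\}$, hence
\[
\mu(f_i+g_i)+i\gamma\geq\min\{\mu(f_i)+i\gamma,\ \mu(g_i)+i\gamma\}.
\]
Taking the minimum over $i$ on both sides, and using the elementary fact that $\min_i\min\{a_i,b_i\}=\min\{\min_i a_i,\min_i b_i\}$, I obtain
\[
\mu_{q,\gamma}(f+g)\geq\min\Big\{\min_{0\leq i\leq N}\{\mu(f_i)+i\gamma\},\ \min_{0\leq i\leq N}\{\mu(g_i)+i\gamma\}\Big\}=\min\{\mu_{q,\gamma}(f),\mu_{q,\gamma}(g)\},
\]
which is exactly \textbf{(V2')} for $\mu_{q,\gamma}$. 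One should also handle the degenerate cases where $f$, $g$, or $f+g$ is zero, but these follow immediately from the convention $\mu(0)=\infty$ and the corresponding behavior of the minimum.

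There is essentially no serious obstacle here; the proof is a routine unwinding of definitions. The only point requiring a little care is the bookkeeping with indices — making sure that padding the shorter $q$-expansion with zeros does not affect the value of $\mu_{q,\gamma}$ (which it does not, since $\mu(0)+i\gamma=\infty$ never attains the minimum unless everything is $\infty$) and that the resulting expression for $f+g$ is genuinely the $q$-expansion, i.e. that the coefficients remain in $K[x]_d$. Once that is observed, the inequality propagates coefficient-by-coefficient through the minimum, and the statement follows.
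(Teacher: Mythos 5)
Your proof is correct in substance and is exactly the routine verification one expects: the paper does not reproduce an argument for this lemma but simply cites \cite[Lemma 2.3]{josneimonomial}, and your coefficient-wise application of \textbf{(V2')} together with the interchange of minima is the standard proof. One small caveat: your justification of the padding step appeals to $\mu(0)=\infty$, but in situation \eqref{sit} $\mu$ is only a map $K[x]_d\lra\Gamma_\infty$ satisfying \textbf{(V2')}, which does not force $\mu(0)=\infty$ (it does hold in all of the paper's applications, where $\mu$ comes from a valuation). The point is harmless and can be closed without that assumption: if $i$ exceeds the length of the $q$-expansion of $f$, then $f_i=0$, so the $i$-th coefficient of $f+g$ is just $g_i$ and $\mu(g_i)+i\gamma\geq\mu_{q,\gamma}(g)$ directly; and if leading coefficients cancel, the true $q$-expansion of $f+g$ is a truncation of your padded expression, so its minimum runs over fewer terms, each already bounded below by $\min\{\mu_{q,\gamma}(f),\mu_{q,\gamma}(g)\}$.
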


\begin{Teo}\cite[Theorem 1.1]{josneimonomial}\label{lemasobreooutrocoiso}
Assume that we are in the situation \eqref{sit} and let $S$ be a subset of $K[x]$ closed under multiplication with $K[x]_d\subseteq S$. Assume that $\mu:S\lra \Gamma_\infty$ satisfies \textbf{(V2')} and that for every $\overline f,\overline g\in K[x]_d$ we have
	\begin{description}
		\item[(i)] $\mu(\overline f\overline g)=\mu(\overline f)+\mu(\overline g)$; and
		\item[(ii)] if $\overline f\overline g=aq+c$ with $c\in K[x]_d$ (and consequently $a\in K[x]_d$), then
		\[
		\mu(c)=\mu(\overline f\overline g)<\mu(a)+\gamma.
		\]
	\end{description}
	Then $\mu_{q,\gamma}$ satisfies \textbf{(V1')} and \textbf{(V2')}.
\end{Teo}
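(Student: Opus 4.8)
\emph{Proof proposal.} The plan is to dispose of \textbf{(V2')} at once and then concentrate on \textbf{(V1')}. Since $\mu$ satisfies \textbf{(V2')} by hypothesis, Lemma \ref{Lemasobreextsvelidesitra} immediately gives that $\mu_{q,\gamma}$ satisfies \textbf{(V2')}, with nothing further to check. So the whole content of the theorem is the multiplicativity statement $\mu_{q,\gamma}(fg)=\mu_{q,\gamma}(f)+\mu_{q,\gamma}(g)$ for all $f,g\in K[x]$, which we may assume to be nonzero.

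First I would fix the $q$-expansions $f=\sum_i f_iq^i$ and $g=\sum_j g_jq^j$, write $\alpha=\mu_{q,\gamma}(f)$, $\beta=\mu_{q,\gamma}(g)$, and reduce the naive product to a genuine $q$-expansion. The point is that each $f_ig_j$ has degree $<2d-1$, so Euclidean division by $q$ yields $f_ig_j=a_{ij}q+c_{ij}$ with $c_{ij}\in K[x]_d$ and, by the degree count, also $a_{ij}\in K[x]_d$. Hypotheses \textbf{(i)} and \textbf{(ii)} then read precisely $\mu(c_{ij})=\mu(f_i)+\mu(g_j)<\mu(a_{ij})+\gamma$ (in particular \textbf{(ii)} forces $\mu$ to be finite on $K[x]_d\setminus\{0\}$). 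Collecting carries, the actual $q$-expansion of $fg$ is $fg=\sum_k(c_k+a_{k-1})q^k$ with $c_k=\sum_{i+j=k}c_{ij}$, $a_k=\sum_{i+j=k}a_{ij}$ and $a_{-1}=0$; this is legitimate because every $c_k+a_{k-1}$ lies in $K[x]_d$.

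Next, applying \textbf{(V2')} for $\mu$ to the sums defining $c_k$ and $a_{k-1}$, and using $\mu(c_{ij})+k\gamma=(\mu(f_i)+i\gamma)+(\mu(g_j)+j\gamma)$ for $i+j=k$ together with $\mu(a_{ij})>\mu(c_{ij})-\gamma$, one obtains $\mu(c_k)+k\gamma\ge\alpha+\beta$ and $\mu(a_{k-1})+k\gamma>\alpha+\beta$ for every $k$, hence $\mu_{q,\gamma}(fg)\ge\alpha+\beta$. For the reverse inequality I would pick the \emph{smallest} indices $i^{\ast}$ with $\mu(f_{i^{\ast}})+i^{\ast}\gamma=\alpha$ and $j^{\ast}$ with $\mu(g_{j^{\ast}})+j^{\ast}\gamma=\beta$, and examine the coefficient of $q^{k^{\ast}}$ for $k^{\ast}=i^{\ast}+j^{\ast}$. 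Any other pair $(i,j)$ with $i+j=k^{\ast}$ has either $i<i^{\ast}$ (so $\mu(f_i)+i\gamma>\alpha$) or $i>i^{\ast}$, i.e. $j<j^{\ast}$ (so $\mu(g_j)+j\gamma>\beta$); either way $\mu(c_{ij})+k^{\ast}\gamma>\alpha+\beta=\mu(c_{i^{\ast}j^{\ast}})+k^{\ast}\gamma$. Thus $c_{i^{\ast}j^{\ast}}$ is the unique term of strictly minimal value in $c_{k^{\ast}}=\sum c_{ij}$, so the strict triangle inequality (the usual consequence of \textbf{(V2')} together with $\mu(-1)=0$, which itself follows from \textbf{(i)} since the value group is torsion free) gives $\mu(c_{k^{\ast}})+k^{\ast}\gamma=\alpha+\beta$; since $\mu(a_{k^{\ast}-1})+k^{\ast}\gamma>\alpha+\beta$, the same principle applied to $c_{k^{\ast}}+a_{k^{\ast}-1}$ forces its value to be $\alpha+\beta-k^{\ast}\gamma$. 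Hence $\mu_{q,\gamma}(fg)\le\alpha+\beta$, and \textbf{(V1')} follows.

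I expect the main obstacle to be the bookkeeping of the carries $a_{ij}$: one must verify that, even after summing over all pairs with $i+j=k$, they never interfere with the minimum, and it is exactly hypothesis \textbf{(ii)} that guarantees this (the ``main part'' $c_{ij}$ strictly dominates its carry $a_{ij}$). A secondary technical point is the strict triangle inequality used to isolate a unique dominating term among the $c_{ij}$, which is why the small remark $\mu(-1)=0$ is needed. Everything else is routine estimation with the ultrametric inequality.
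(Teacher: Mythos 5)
Your proof is correct, but note that the paper itself offers no proof of this statement to compare against: it is quoted verbatim from \cite[Theorem 1.1]{josneimonomial}. Your route — settling \textbf{(V2')} at once via Lemma \ref{Lemasobreextsvelidesitra}, dividing each product $f_ig_j$ by $q$ so that \textbf{(i)}--\textbf{(ii)} make the carry $a_{ij}$ strictly dominated by the remainder $c_{ij}$, and then isolating the coefficient at $k^{*}=i^{*}+j^{*}$ via the smallest-index choice — is precisely the standard MacLane-type argument used in that source, and the estimates you give ($\mu(c_k)+k\gamma\geq\alpha+\beta$, $\mu(a_{k-1})+k\gamma>\alpha+\beta$, equality at $k^{*}$ by strict domination) are complete. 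The only point worth making explicit is the convention $\mu(0)=\infty$ (equivalently, reading \textbf{(i)}--\textbf{(ii)} for nonzero $\overline f,\overline g$ only), so that zero coefficients in the $q$-expansions do not affect the minima; with that convention your observation that \textbf{(ii)} forces $\mu$ to be finite on $K[x]_d\setminus\{0\}$, together with $\mu(-1)=0$, is exactly what legitimizes the strict triangle inequality at $k^{*}$.
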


For $a\in K$ and $\gamma\in \Gamma'$, we denote $v_{a,\gamma}$ instead of $v_{x-a,\gamma}$. The following result follows immediately from Theorem \ref{lemasobreooutrocoiso} (for $\mu=v$, $q=x-a$, $d=1$ and $K[x]_d=K$).

\begin{Cor}
	For any $a\in K$ and $\gamma\in \Gamma'$, the map 
	$$v_{a,\gamma}(a_0+a_1(x-a)+\ldots+a_r(x-a)^r):=\min_{0\leq i\leq r}\{v(a_i)+i\gamma\} $$
	is a valuation on $K[x]$. 
\end{Cor}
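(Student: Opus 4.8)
The plan is to deduce this corollary directly from Theorem \ref{lemasobreooutrocoiso} by verifying its hypotheses in the specific situation $\mu=v$, $d=1$, $q=x-a$ (so $K[x]_d=K$), and the group extension $\Gamma=\Gamma_v\hookrightarrow\Gamma'$ with the chosen $\gamma\in\Gamma'$. First I would take $S=K[x]$, which is of course closed under multiplication and contains $K[x]_1=K$. Since $v$ is a valuation on the field $K$, it satisfies \textbf{(V2)} on all of $K$, hence a fortiori it satisfies \textbf{(V2')} as a map $v:K\to(\Gamma_v)_\infty$, and it is even multiplicative on $K$, which is exactly hypothesis \textbf{(i)}: for $\overline f,\overline g\in K$ we have $v(\overline f\,\overline g)=v(\overline f)+v(\overline g)$.

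The only hypothesis requiring a small argument is \textbf{(ii)}. Here $\overline f,\overline g\in K$, so $\overline f\,\overline g\in K$, and its $(x-a)$-expansion is simply $\overline f\,\overline g=0\cdot(x-a)+\overline f\,\overline g$; that is, in the notation of the theorem, $a=0$ and $c=\overline f\,\overline g$. Thus the required inequality reads $v(c)=v(\overline f\,\overline g)<v(0)+\gamma$. Since $v(0)=\infty$ by \textbf{(V3)}, the right-hand side is $\infty+\gamma=\infty$, and the strict inequality $v(\overline f\,\overline g)<\infty$ holds precisely when $\overline f\,\overline g\neq 0$. One subtlety is the degenerate case $\overline f\,\overline g=0$: then the product is not of the form $aq+c$ with $c\in K[x]_d$ in any nontrivial way, or rather $c=0$ and the inequality $v(0)<v(0)+\gamma$ is $\infty<\infty$, which fails. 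But this case is harmless — one checks that Theorem \ref{lemasobreooutrocoiso}'s conclusion \textbf{(V1')} is about products, and when either factor is $0$ the identity $\mu_{q,\gamma}(0\cdot h)=\mu_{q,\gamma}(0)=\infty=\mu_{q,\gamma}(0)+\mu_{q,\gamma}(h)$ holds trivially; I would either note that the theorem is implicitly understood for nonzero arguments or observe that \textbf{(ii)} need only be checked when $c\neq 0$.

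Having verified \textbf{(i)} and \textbf{(ii)} together with \textbf{(V2')} for $\mu=v$, Theorem \ref{lemasobreooutrocoiso} yields that $v_{a,\gamma}:=v_{x-a,\gamma}$ satisfies \textbf{(V1')} and \textbf{(V2')} on all of $S=K[x]$. Since \textbf{(V1')} on the whole ring $K[x]$ is just \textbf{(V1)}, and likewise \textbf{(V2')} on $K[x]$ is \textbf{(V2)}, it remains only to record \textbf{(V3)}: $v_{a,\gamma}(1)=v(1)+0\cdot\gamma=0$ and $v_{a,\gamma}(0)=\infty$, both immediate from the defining formula $v_{a,\gamma}(a_0+a_1(x-a)+\cdots+a_r(x-a)^r)=\min_{0\le i\le r}\{v(a_i)+i\gamma\}$. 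Therefore $v_{a,\gamma}$ is a valuation on $K[x]$ extending $v$.

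I expect no real obstacle here — the statement is explicitly flagged in the excerpt as following ``immediately'' from Theorem \ref{lemasobreooutrocoiso}. The only point demanding care is the bookkeeping around the support, i.e.\ making sure the strict inequality in \textbf{(ii)} and the passage from \textbf{(V1')}/\textbf{(V2')} on $S=K[x]$ to the genuine valuation axioms \textbf{(V1)}/\textbf{(V2)}/\textbf{(V3)} are handled cleanly, including the trivial zero cases. Everything else is a direct substitution into the cited theorem.
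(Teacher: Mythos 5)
Your proposal is correct and follows exactly the paper's route: the paper gives no separate argument, deriving the corollary ``immediately'' from Theorem \ref{lemasobreooutrocoiso} with the same substitution $\mu=v$, $q=x-a$, $d=1$, $K[x]_d=K$, and your verification of \textbf{(V2')}, of hypotheses \textbf{(i)} and \textbf{(ii)} (including the harmless zero case), and of the passage from \textbf{(V1')}/\textbf{(V2')} on $K[x]$ to the axioms \textbf{(V1)}--\textbf{(V3)} is precisely the implicit content. One small correction: the set $S$ in the theorem is the domain of $\mu$, so you should take $S=K=K[x]_1$ (which is closed under multiplication and contains $K[x]_d$), not $S=K[x]$, since $v$ is not defined on all of $K[x]$; with that adjustment everything you wrote goes through verbatim.
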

The valuations $v_{a,\gamma}$ as defined above are called \textbf{monomial valuations}.

\begin{Def}
For $a\in K$ and $\gamma\in \Gamma_v$ the \textbf{(closed) ball centered at $a$ and with radius $\gamma$} is defined as
\[
B(a,\gamma)=\{b\in K\mid v(b-a)\geq \gamma\}.
\]
\end{Def}
\begin{Obs}
The concept of open ball is defined in a similar way:
\[
B^\circ(a,\gamma)=\{b\in K\mid v(b-a)> \gamma\}.
\]
This object will not play a role in this paper.
\end{Obs}
We present some easy properties of balls.
\begin{Lema}\label{lemasobrebola} $\,$
	
\begin{description}
\item[(i)] If $b\in B(a,\gamma)$, then $B(a,\gamma)=B(b,\gamma)$.
\item[(ii)] There exists $b\in B(a,\gamma)$ such that $v(b-a)=\gamma$.
\item[(iii)] If $B(a,\gamma)\subseteq B(a',\gamma')$, then $\gamma\geq\gamma'$.
\item[(iv)] If $B(a,\gamma)\subseteq B(a',\gamma')$ and $\gamma>\gamma'$, then $B(a',\gamma')\setminus B(a,\gamma)\neq \emptyset$.
\end{description}
\begin{proof}
In order to show \textbf{(i)}, assume that $b\in B(a,\gamma)$ and take $c\in B(b,\gamma)$. Then
\[
v(c-a)=v(c-b+b-a)\geq \min\{v(c-b),v(b-a)\}\geq\gamma.
\]
Hence $B(b,\gamma)\subset B(a,\gamma)$. Since $b\in B(a,\gamma)$ if and only if $a\in B(b,\gamma)$, the other inclusion follows by the symmetric argument.

We will now show \textbf{(ii)}. Take any $c\in B(a,\gamma)$. If $v(c-a)=\gamma$, then take $b=c$. If not, then we take any $d\in K$ such that $v(d)=\gamma$ and set $b=d+c$. Then
\[
v(b-a)=v(d+c-a)=v(d)=\gamma.
\]

If $B(a,\gamma)\subseteq B(a',\gamma')$, then $a\in B(a',\gamma')$ and by \textbf{(i)} we have $B(a,\gamma')=B(a',\gamma')$. By \textbf{(ii)} there exists $b\in B(a,\gamma)$ such that $v(b-a)=\gamma$. Since $B(a,\gamma)\subseteq B(a',\gamma')$, this shows that $b\in B(a,\gamma')$ and consequently
\[
\gamma=v(b-a)\geq \gamma'.
\]
This shows \textbf{(iii)}.

Finally, assume that $B(a,\gamma)\subseteq B(a',\gamma')$ and $\gamma>\gamma'$. By \textbf{(i)} we have $B(a,\gamma')=B(a',\gamma')$. By \textbf{(ii)}, there exists $b\in B(a,\gamma')$ such that $v(b-a)=\gamma'$. Since $\gamma>\gamma'$, this shows that $b\notin B(a,\gamma)$ and this completes the proof.
\end{proof}
\end{Lema}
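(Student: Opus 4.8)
The plan is to prove the four statements in the order given, since each later one uses the earlier ones, and to rely throughout only on the ultrametric inequality \textbf{(V2)} for $v$ together with the translation-invariance of $v(b-a)$ under adding elements of prescribed value. For \textbf{(i)}, I would take $b\in B(a,\gamma)$ and an arbitrary $c\in B(b,\gamma)$, write $c-a=(c-b)+(b-a)$, and apply \textbf{(V2)} to conclude $v(c-a)\geq\min\{v(c-b),v(b-a)\}\geq\gamma$; this gives $B(b,\gamma)\subseteq B(a,\gamma)$, and the reverse inclusion follows by symmetry once one notes $b\in B(a,\gamma)\iff v(b-a)\geq\gamma\iff a\in B(b,\gamma)$.

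For \textbf{(ii)} the idea is: pick any $c\in B(a,\gamma)$ (the center $a$ itself won't do since $v(a-a)=\infty$, so one must produce a genuine point of value exactly $\gamma$). If $v(c-a)=\gamma$ we are done; otherwise choose $d\in K$ with $v(d)=\gamma$ — such a $d$ exists because $\gamma\in\Gamma_v$ is in the value group of $v$ — and set $b=c+d$. Then $v(b-a)=v(d+(c-a))$, and since $v(c-a)>\gamma=v(d)$, the ultrametric inequality is an equality: $v(b-a)=\gamma$. Here the one subtle point to get right is the strict inequality $v(c-a)>\gamma$ in the nontrivial case, which is exactly what makes $v(d+(c-a))=\min=v(d)$ exact rather than just $\geq$.

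For \textbf{(iii)} I would use \textbf{(i)} and \textbf{(ii)} together: from $B(a,\gamma)\subseteq B(a',\gamma')$ we get $a\in B(a',\gamma')$, hence by \textbf{(i)} $B(a',\gamma')=B(a,\gamma')$; by \textbf{(ii)} there is $b\in B(a,\gamma)$ with $v(b-a)=\gamma$, and since $b\in B(a,\gamma)\subseteq B(a,\gamma')$ we get $\gamma=v(b-a)\geq\gamma'$. Statement \textbf{(iv)} is proved the same way: rewrite $B(a',\gamma')=B(a,\gamma')$ via \textbf{(i)}, use \textbf{(ii)} to find $b$ with $v(b-a)=\gamma'<\gamma$, and observe that $v(b-a)=\gamma'<\gamma$ forces $b\notin B(a,\gamma)$ while $b\in B(a,\gamma')=B(a',\gamma')$, so $b$ witnesses $B(a',\gamma')\setminus B(a,\gamma)\neq\emptyset$.

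None of these steps presents a real obstacle; the arguments are short ultrametric manipulations. The only place demanding a moment's care is \textbf{(ii)}: one must remember that the hypothesis $\gamma\in\Gamma_v$ (rather than in some extension) is what guarantees an element $d\in K$ of value exactly $\gamma$, and one must split into the two cases $v(c-a)=\gamma$ and $v(c-a)>\gamma$ so as to invoke the exact form of the ultrametric inequality in the second case. Everything else is a direct consequence of \textbf{(V2)} and the already-established parts.
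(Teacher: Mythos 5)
Your proposal is correct and follows essentially the same route as the paper: part (i) by the ultrametric inequality plus symmetry, part (ii) by adjusting with an element $d\in K$ of value exactly $\gamma$ (the paper leaves the equality $v(d+(c-a))=v(d)$ implicit where you spell it out), and parts (iii) and (iv) by combining (i) and (ii) exactly as in the paper's argument.
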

The next result allows us to determine when two monomial valuations are comparable.
\begin{Lema}\label{lemNuaGammaMenorIgual}
	Take $a,a'\in K$ and $\gamma,\gamma'\in \Gamma_v$. Then the following conditions are equivalent.
\begin{description}
\item[(i)] $v_{a',\gamma'}\leq v_{a,\gamma}$,
\item[(ii)] $\gamma\geq \gamma'$ and $v(a'-a)\geq \gamma'$,
\item[(iii)] $B(a,\gamma)\subseteq B(a',\gamma')$.
\end{description}
\end{Lema}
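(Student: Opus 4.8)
The plan is to prove the equivalences by establishing the cycle $\textbf{(i)}\Rightarrow\textbf{(ii)}\Rightarrow\textbf{(iii)}\Rightarrow\textbf{(i)}$, using the explicit formula for monomial valuations together with Lemma \ref{lemasobrebola}. The key observation throughout is that $v_{a,\gamma}(x-a')$ can be computed by expanding $x-a' = (x-a)+(a-a')$, which gives $v_{a,\gamma}(x-a') = \min\{\gamma, v(a-a')\}$.

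First I would show $\textbf{(i)}\Rightarrow\textbf{(ii)}$. Assuming $v_{a',\gamma'}\leq v_{a,\gamma}$, I evaluate both valuations on two test polynomials. Taking $f = x-a$, the $(x-a')$-expansion is $(x-a')+(a'-a)$, so $v_{a',\gamma'}(x-a) = \min\{\gamma', v(a'-a)\}$, while trivially $v_{a,\gamma}(x-a) = \gamma$; hence $\min\{\gamma', v(a'-a)\}\leq \gamma$. Taking $f = x-a'$ instead, we get $\gamma' = v_{a',\gamma'}(x-a') \leq v_{a,\gamma}(x-a') = \min\{\gamma, v(a-a')\}$, which already yields both $\gamma'\leq\gamma$ and $\gamma'\leq v(a-a')$, i.e. exactly condition \textbf{(ii)}. (So the first test polynomial is actually redundant, but it is harmless to keep as a sanity check; in the write-up I would just use $f = x-a'$.)

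Next, $\textbf{(ii)}\Leftrightarrow\textbf{(iii)}$ is essentially a restatement of parts \textbf{(i)} and \textbf{(iii)} of Lemma \ref{lemasobrebola}. If $\gamma\geq\gamma'$ and $v(a-a')\geq\gamma'$, then $a\in B(a',\gamma')$, so by Lemma \ref{lemasobrebola}\textbf{(i)} we have $B(a',\gamma') = B(a,\gamma')$; since $\gamma\geq\gamma'$, clearly $B(a,\gamma)\subseteq B(a,\gamma') = B(a',\gamma')$. Conversely, if $B(a,\gamma)\subseteq B(a',\gamma')$, then $a\in B(a',\gamma')$ gives $v(a-a')\geq\gamma'$, and Lemma \ref{lemasobrebola}\textbf{(iii)} gives $\gamma\geq\gamma'$.

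Finally, for $\textbf{(iii)}\Rightarrow\textbf{(i)}$ I need to show that $B(a,\gamma)\subseteq B(a',\gamma')$ implies $v_{a',\gamma'}(f)\leq v_{a,\gamma}(f)$ for every $f\in K[x]$. The cleanest route is to reduce to linear factors: it suffices to prove the inequality when $f = x-b$ for $b\in\overline{K}$ (working over the algebraic closure, extending $v$), since for general $f$ one factors $f = c\prod(x-b_i)$ over $\overline{K}$ and both sides are additive over such products — here I would invoke that $\mu_{q,\gamma}$ is a genuine valuation (the Corollary preceding the Definition of balls) so that $v_{a,\gamma}(f) = v(c) + \sum v_{a,\gamma}(x-b_i)$, and similarly for $v_{a',\gamma'}$, after fixing a common extension to $\overline{K}[x]$. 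For $f = x-b$ we have $v_{a,\gamma}(x-b) = \min\{\gamma, v(a-b)\}$ and $v_{a',\gamma'}(x-b) = \min\{\gamma', v(a'-b)\}$, so I must check $\min\{\gamma', v(a'-b)\}\leq\min\{\gamma, v(a-b)\}$ under the hypothesis $\gamma\geq\gamma'$ and $v(a-a')\geq\gamma'$ (which is \textbf{(ii)}, already shown equivalent to \textbf{(iii)}). This is a short case analysis: $\gamma'\leq\gamma$ handles the first term, and for the second, $v(a'-b)\geq\min\{v(a'-a), v(a-b)\}\geq\min\{\gamma', v(a-b)\}$, so in either case $\min\{\gamma', v(a'-b)\}\leq \min\{\gamma, v(a-b)\}$. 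I expect the main obstacle to be the reduction to linear factors: one must be careful that the monomial valuation on $K[x]$ really is the restriction of the monomial valuation on $\overline{K}[x]$ with the same center and radius (this is immediate from the defining formula once one checks the $(x-a)$-expansion of a polynomial in $K[x]$ is the same whether computed over $K$ or over $\overline{K}$), and that taking $v$-values of the factors $a-b_i$, $a'-b_i$ makes sense in the extended value group. Alternatively one can avoid $\overline{K}$ entirely by arguing directly with $q$-expansions, but the linear-factor argument is shorter and more transparent.
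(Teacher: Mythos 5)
Your argument is correct in substance, and for two of the three arrows it is the paper's own: your \textbf{(i)}$\Rightarrow$\textbf{(ii)} via the test polynomial $x-a'$ is just the contrapositive of the paper's computation $v_{a,\gamma}(x-a')=\min\{\gamma,v(a'-a)\}$, and your \textbf{(ii)}$\Leftrightarrow$\textbf{(iii)} via Lemma~\ref{lemasobrebola}\textbf{(i)},\textbf{(iii)} is exactly what the paper does. The genuine divergence is the direction \textbf{(ii)}$\Rightarrow$\textbf{(i)}: the paper never leaves $K$, writing $f=\sum a_i(x-a')^i$ and using only that $v_{a,\gamma}$ is a valuation with $v_{a,\gamma}(x-a')\geq\min\{\gamma,v(a'-a)\}\geq\gamma'$, so that $v_{a,\gamma}(f)\geq\min_i\{v(a_i)+i\gamma'\}=v_{a',\gamma'}(f)$ in one line; you instead fix an extension $\overline{v}$ to $\overline{K}$, check that the monomial valuations on $\overline{K}[x]$ restrict to those on $K[x]$, use multiplicativity to reduce to linear factors, and compare $\min\{\gamma',\overline{v}(a'-b)\}$ with $\min\{\gamma,\overline{v}(a-b)\}$. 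This works and buys a more geometric, root-by-root picture (consonant with the formula $v_{a,\gamma}(f)=\min_{b\in B(a,\gamma)}v(f(b))$ for algebraically closed $K$ quoted in the introduction), but at the price of extra setup that the $(x-a')$-expansion estimate avoids entirely, so contrary to your closing remark the expansion argument is in fact the shorter route here. One small repair is needed in your case analysis: the lower bound $\overline{v}(a'-b)\geq\min\{\overline{v}(a'-a),\overline{v}(a-b)\}$ does not by itself yield $\min\{\gamma',\overline{v}(a'-b)\}\leq\min\{\gamma,\overline{v}(a-b)\}$ when $\overline{v}(a-b)<\gamma'$; in that case you should invoke the ultrametric \emph{equality} $\overline{v}(a'-b)=\overline{v}(a-b)$ (valid since $\overline{v}(a'-a)\geq\gamma'>\overline{v}(a-b)$), after which both minima equal $\overline{v}(a-b)$.
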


\begin{proof}
Assume that $\gamma\geq \gamma'$ and $v(a'-a)\geq \gamma'$. Then
\[
v_{a,\gamma}(x-a')\geq\min\{\gamma,v(a'-a)\}\geq \gamma'.
\]
Hence, for any $f=a_0+a_1(x-a')+\ldots+a_r(x-a')^r$ we have
\[
v_{a,\gamma}(f)\geq \min_{0\leq i\leq r}\left\{v(a_i)+iv_{a,\gamma}(x-a')\right\}\geq \min_{0\leq i\leq r}\{v(a_i)+i\gamma'\}=v_{a',\gamma'}(f).
\]
This shows that \textbf{(ii)} implies \textbf{(i)}. For the converse, suppose that $\gamma< \gamma'$ or $v(a'-a)< \gamma'$. Then
\[
v_{a,\gamma}(x-a')=\min\{\gamma,v(a'-a)\}<\gamma'=v_{a',\gamma'}(x-a').
\]
Hence, $v_{a',\gamma'}\not\leq v_{a,\gamma}$.

Assume that \textbf{(ii)} is satisfied and take $b\in B(a,\gamma)$. Then
\[
v(b-a')=v(b-a+a-a')\geq \min\{v(b-a), v(a-a')\}\geq \gamma'. %\min\{\gamma,\gamma'\}=\gamma'.
\]
Consequently, \textbf{(iii)} is satisfied. The converse follows from the definition of ball and Lemma \ref{lemNuaGammaMenorIgual} \textbf{(iii)}.
\end{proof}

For a valuation $\nu$ on $K[x]$ we denote by $\Gamma_\nu$ its valued group and (if $\SU(\nu)=0$) its residue field by $k_\nu$. Any valuation $\nu$ on $K[x]$, extending $v$, is (exclusively) of one of the following types:
\begin{itemize}
	\item\textbf{Nontrivial support}: $\SU(\nu)=fK[x]$ for some irreducible $f\neq 0$.
	\item\textbf{Value-transcendental}: $\SU(\nu)=0$ and $\Gamma_\nu/\Gamma_v$ is not a torsion group.
	\item\textbf{Residue-transcendental}: $\SU(\nu)=0$ and  $k_\nu/k$  is transcendental.
	\item\textbf{Valuation-algebraic}: $\SU(\nu)=0$,  $\Gamma_\nu/\Gamma_v$ is a torsion group and $k_\nu/k$  is algebraic.
\end{itemize}

\begin{Def}\label{defValAlg} A valuation $\nu$ on $K[x]$  extending $v$  is called \textbf{valuation-transcendental} if it is value-transcendental or residue-transcendental.
	
\end{Def}
\begin{Obs}
By Abhyankar's inequality (see \cite{zar}, p.330) a valuation cannot be  value-transcendental and residue-transcendental at the same time. 
\end{Obs}

The next result is well-known.
\begin{Lema}\cite[Proposition 1.1]{popescu2}\label{lemqnSobrecTransc1}
If $a\in K$ and $\gamma\in \Gamma_v$, then the valuation $v_{a,\gamma}$ is residue-transcendental.
\end{Lema}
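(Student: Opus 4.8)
The plan is to verify directly that $v_{a,\gamma}$ has trivial support and transcendental residue field. Since $v_{a,\gamma}$ is already known to be a valuation (by the Corollary), I first note that $\supp(v_{a,\gamma})=0$: if $f=\sum_{i=0}^r a_i(x-a)^i$ is nonzero, then some $a_i\neq 0$, so $v(a_i)\neq\infty$ and hence $v_{a,\gamma}(f)=\min_i\{v(a_i)+i\gamma\}<\infty$. Thus $v_{a,\gamma}$ is a Krull valuation and extends to $K(x)$, and the four-way classification from Section \ref{Preli} applies.

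Next I would identify the value group. From the $(x-a)$-expansion formula, $v_{a,\gamma}(x-a)=\gamma$ and $v_{a,\gamma}|_K=v$, so $\Gamma_v+\Z\gamma\subseteq\Gamma_{v_{a,\gamma}}$; conversely every value $v_{a,\gamma}(f)$ is of the form $v(a_i)+i\gamma$, so in fact $\Gamma_{v_{a,\gamma}}=\Gamma_v+\Z\gamma$. Since $\gamma\in\Gamma_v$ by hypothesis, this gives $\Gamma_{v_{a,\gamma}}=\Gamma_v$, so $\Gamma_{v_{a,\gamma}}/\Gamma_v$ is trivial, in particular a torsion group. Hence $v_{a,\gamma}$ is not value-transcendental, and combined with the previous paragraph it is either residue-transcendental or valuation-algebraic; it therefore suffices to exhibit an element of the residue field $k_{v_{a,\gamma}}$ that is transcendental over $k$.

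To do this I would produce an explicit transcendental residue. Pick $c\in K$ with $v(c)=\gamma$ (possible since $\gamma\in\Gamma_v$) and set $y=(x-a)/c$. Then $v_{a,\gamma}(y)=\gamma-\gamma=0$, so $y\in\VR_{v_{a,\gamma}}$; let $\eta=y\,v_{a,\gamma}\in k_{v_{a,\gamma}}$ be its residue. I claim $\eta$ is transcendental over $k$. Indeed, take any nonzero polynomial $P(T)=\sum_{i=0}^r b_i T^i\in\VR_v[T]$ whose reduction $\overline P\in k[T]$ is nonzero; then $P(y)=\sum_i b_i c^{-i}(x-a)^i$, and by the defining formula $v_{a,\gamma}(P(y))=\min_i\{v(b_i)-i\gamma+i\gamma\}=\min_i v(b_i)$. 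Choosing $P$ so that this minimum is $0$ (e.g. scaling $P$ so that at least one $b_i$ is a unit, which is automatic once $\overline P\neq 0$ after clearing denominators), we get $v_{a,\gamma}(P(y))=0$, so $P(y)$ is a unit in $\VR_{v_{a,\gamma}}$ and in particular $P(\eta)=\overline P(\eta)\neq 0$ in $k_{v_{a,\gamma}}$. Since this holds for every nonzero $\overline P\in k[T]$, the element $\eta$ satisfies no nonzero polynomial over $k$, i.e. $\eta$ is transcendental. Therefore $k_{v_{a,\gamma}}/k$ is transcendental and $v_{a,\gamma}$ is residue-transcendental.

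The only mildly delicate point is the bookkeeping in the last paragraph: one must be careful that ``scaling $P$ so that some coefficient is a unit'' does not kill the reduction $\overline P$, which is why I phrased it via choosing $P$ with $\overline P\neq 0$ and $\min_i v(b_i)=0$ simultaneously — given any nonzero polynomial over $k$, lift its coefficients to $\VR_v$ and that lift automatically has $\min_i v(b_i)=0$. Everything else is a direct unwinding of the definition of $v_{a,\gamma}$ together with the classification of extensions recalled just before the statement. Alternatively, one could simply invoke Lemma \ref{lemNuaGammaMenorIgual} and the structure theory of monomial valuations, but the direct computation above is self-contained and short.
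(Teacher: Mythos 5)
Your proof is correct. There is nothing in the paper to compare it with: the lemma is simply quoted from \cite[Proposition 1.1]{popescu2} without an internal argument, so your self-contained verification is a genuine addition rather than a variant of the paper's route. The core of your argument is sound: the support is trivial because $\gamma\in\Gamma_v$ keeps every value $v(a_i)+i\gamma$ finite, and for $y=(x-a)/c$ with $v(c)=\gamma$ the $(x-a)$-expansion gives $v_{a,\gamma}(P(y))=\min_i v(b_i)=0$ for any lift $P=\sum_i b_iT^i\in\VR_v[T]$ of a nonzero polynomial over $k$, so the residue $\overline P(\eta)$ of $P(y)$ is nonzero (the residue map being a ring homomorphism), whence $\eta$ is transcendental over $k$. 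Two minor remarks: the value-group computation $\Gamma_{v_{a,\gamma}}=\Gamma_v$ is superfluous, since the classification recalled in Section \ref{Preli} defines residue-transcendental solely by trivial support together with $k_{v_{a,\gamma}}/k$ transcendental; and in your last paragraph the cleaner statement is that any lift of a nonzero polynomial over $k$ automatically has some coefficient of value $0$, which is exactly the formulation you settle on, so no extra ``scaling'' discussion is needed.
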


\section{Universal value group for small extensions of $\Gamma_v$}\label{SecV}
We review now some results from \cite{NartTree} that will be helpful in what follows. For simplicity of notation, in this section we denote $\Gamma=\Gamma_v$. The main goal is to present a universal ordered group $\Lambda$ containing, up to order-preserving isomorphism, all \textit{small extensions}\footnote{$\Gamma\hookrightarrow\Lambda$ is a small extension of abelian ordered groups if $\Gamma/\Delta$ is a cyclic group, where $\Delta\subset \Lambda$ is the relative divisible closure of $\Gamma$ in $\Lambda$. } of $\Gamma$. In particular, this universal group has the property that each class of valuations whose restriction to $K$ is equivalent to $v$ contains a valuation $\mu:K[x]\lra \Lambda_\infty$.

Given $\gamma\in \Gamma$, the \textit{principal convex subgroup} of $\Gamma$ generated by $\gamma$ is the  intersection of all convex subgroups of $\Gamma$ containing $\gamma$. 
Let
$$I:= {\rm Prin}(\Gamma) $$
be the totally ordered set of non-zero principal convex subgroups of $\Gamma$, ordered by decreasing inclusion.  
We take the Hahn product
$$\R_{\text{lex}}^I:=\{ y=(y_i)_{i\in I}\mid \supp(y) \text{ is well-ordered}\} \subset \R^I $$
where $\supp(y) = \{ i\in I\mid y_i\neq 0  \}$. 

Let ${\rm Init}(I)$ be the set of initial segments of $I$. For each $S\in {\rm Init}(I)$, we consider a formal symbol $i_S$ and the ordered set
$$I_S:= I\sqcup\{i_S\}, $$
with the order defined by $i<i_S$ for all $i\in S$ and $i_S<j$ for all $j\in I\setminus S$. We define
$$\mathbb{I}:= I\sqcup \{ i_S\mid S\in {\rm Init}(I)\}. $$
The order in $\mathbb{I}$ is such that $I_S \hookrightarrow \mathbb{I}$ preserves the order for all $S\in  {\rm Init}(I)$ and, for every $S, T\in {\rm Init}(I)$, we have $i_S<i_T$ if and only if $S\subsetneq T$. As above, we consider the Hahn product $$\R_{\text{lex}}^{\mathbb{I}}\subset \R^{\mathbb{I}}. $$ For all $S\in  {\rm Init}(I)$, since $I\subset I_S \subset \mathbb{I}$,  we have the embeddings
$$ \R_{\text{lex}}^I \hookrightarrow \R_{\text{lex}}^{I_S}\hookrightarrow \R_{\text{lex}}^{\mathbb{I}}$$
Also, \cite{NartTree} shows that  $$ \Gamma \hookrightarrow \Gamma_\Q \hookrightarrow \R_{\text{lex}}^I \hookrightarrow \R_{\text{lex}}^{\mathbb{I}}.  $$
\begin{Prop}\cite[Proposition 6.2]{NartTree}
Let $\mu$ be a valuation on $K[x]$ whose restriction to $K$ is equivalent to $v$. Then, there exists an embedding $j:\Gamma_\mu \hookrightarrow \R_{\text{lex}}^{\mathbb{I}}$ satisfying the following properties:
\begin{description}
	\item[(i)] the following diagram commutes:

	\begin{center}
			\begin{tikzcd}[row sep=large, column sep = large]
		K[x] \arrow[r, rightarrow, "j\circ \mu"]{}{}  & (\mathbb{R}_{\rm lex }^{\mathbb{I}})_\infty \\
	K \arrow [u, hookrightarrow ]{}{} \arrow[r , "v", labels=below]{}{} & \Gamma_\infty \arrow [u, hookrightarrow ]{}{}
	\end{tikzcd}
	\end{center}
	
	\item[(ii)] There exists $S\in  {\rm Init}(I)$ such that $j(\Gamma_\mu)\subset \R_{\text{lex}}^{I_S}$. 
\end{description}
\end{Prop}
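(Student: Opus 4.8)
The plan is to reduce the statement to the corresponding fact about value groups, which is essentially built into the construction of $\mathbb{I}$ and the Hahn products. Since the restriction of $\mu$ to $K$ is equivalent to $v$, there is an order-preserving isomorphism $\tau\colon v K \to \mu K$ with $\tau\circ v = \mu|_K$, so I may as well identify $\Gamma = vK$ with its image in $\Gamma_\mu = \mu K$; thus $\Gamma \hookrightarrow \Gamma_\mu$ is an extension of ordered abelian groups. The first step is to observe that this is a \emph{small} extension in the sense of the footnote: because $x$ generates $K[x]$ over $K$ as a ring and $\mu$ is a valuation, the quotient $\Gamma_\mu/\Delta$ (where $\Delta$ is the relative divisible closure of $\Gamma$ in $\Gamma_\mu$) is cyclic — this is the standard Abhyankar-type bound for valuations on a one-variable polynomial ring, using that $\mu$ is determined on $K[x]$ by the values $\mu(x-a)$ (or more precisely by a chain of key polynomials). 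I would cite the relevant lemma from \cite{NartTree} establishing that value groups of valuations on $K[x]$ extending $v$ are small extensions of $\Gamma$.

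The second step is to invoke the universality of $\Lambda = \R_{\mathrm{lex}}^{\mathbb{I}}$ for small extensions of $\Gamma$, which is precisely why $\mathbb{I}$ was built by adjoining the symbols $i_S$ for $S \in \mathrm{Init}(I)$: each small extension of $\Gamma$ embeds, compatibly with the fixed embedding $\Gamma \hookrightarrow \R_{\mathrm{lex}}^I$, into $\R_{\mathrm{lex}}^{I_S}$ for a suitable initial segment $S$. Concretely, if $\Delta$ is the relative divisible closure of $\Gamma$ in $\Gamma_\mu$, then $\Delta \hookrightarrow \R_{\mathrm{lex}}^I$ extending $\Gamma_\Q \hookrightarrow \R_{\mathrm{lex}}^I$ (since $\Delta$ is still a divisible-by-$\Gamma$ small piece sitting inside $\Gamma_\Q$ after tensoring, or rather $\Delta_\Q = \Gamma_\Q$), and then the extra cyclic generator of $\Gamma_\mu/\Delta$ is accommodated by the formal element $i_S$, where $S$ records exactly the convex position of that generator relative to the archimedean classes indexed by $I$. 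This gives the embedding $j\colon \Gamma_\mu \hookrightarrow \R_{\mathrm{lex}}^{\mathbb{I}}$ together with property \textbf{(ii)}, namely $j(\Gamma_\mu) \subset \R_{\mathrm{lex}}^{I_S}$, and by construction $j$ restricts on $\Gamma$ to the standard embedding $\Gamma \hookrightarrow \R_{\mathrm{lex}}^I \hookrightarrow \R_{\mathrm{lex}}^{\mathbb{I}}$, which is precisely the commutativity of the diagram in \textbf{(i)} after composing with $v$ on the bottom row.

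The main obstacle — and the place where real work is hidden — is the claim that the cyclic extension $\Gamma \subset \Gamma_\mu$ genuinely embeds into one of the $\R_{\mathrm{lex}}^{I_S}$, i.e.\ that one new formal symbol suffices and that its insertion point is a well-defined initial segment of $I$. This requires analyzing where the new generator $\delta$ of $\Gamma_\mu / \Delta$ falls: the set $S = \{\, i \in I : \text{the principal convex subgroup } i \text{ lies below the convex subgroup generated by } \delta\,\}$ must be checked to be an initial segment, and one must verify that the lexicographic order on $\R_{\mathrm{lex}}^{I_S}$ correctly reproduces the order of $\Gamma_\mu$ once $\delta$ is sent to a generator of the $i_S$-component (or a suitable real multiple, handling the cases $\delta$ of finite vs.\ infinite order modulo $\Delta$, equivalently whether $\Gamma_\mu/\Gamma$ has torsion). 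All of this is carried out in \cite{NartTree}; in a self-contained write-up I would restate their key lemma on small extensions and the embedding of Hahn products, and then assemble $j$ from the embedding of $\Delta$ and the choice of image for $\delta$, checking order-compatibility on a basis of $\Gamma_\mu$ relative to $\Delta$.
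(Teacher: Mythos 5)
This proposition is imported verbatim from \cite[Proposition 6.2]{NartTree}; the paper gives no proof of its own, and your sketch correctly reconstructs the strategy of that reference: first that $\Gamma_\mu$ is a small extension of $\Gamma$ (cyclic modulo the relative divisible closure), then the universal embedding property of $\R_{\text{lex}}^{\mathbb{I}}$, built exactly so that any such extension embeds over $\Gamma$ into $\R_{\text{lex}}^{I_S}$ for a suitable initial segment $S$. Since both key ingredients are correctly identified and the genuinely technical steps (smallness of the extension and the placement of the new generator, including the torsion versus torsion-free cases) are explicitly deferred to \cite{NartTree} and \cite{KuhlmannNart}, your proposal is consistent with how the result is actually established there.
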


Hence, $\mu$ is equivalent to the valuation $j\circ \mu$ on $K[x]$ and $v$ is equivalent to $\l \circ v$ on $K$. We can then assume that $v$ takes values in $\R_{\text{lex}}^{I}$. Moreover,  for each class of valuations in $ \mathbb{V}$ we can take a representative $\mu$ such that $\mu|_K =v$ and $\Gamma_\mu\subset  \R_{\text{lex}}^{I_S}$ for some $S\in  {\rm Init}(I)$ (\cite{NartTree}, p.35).

We define
$$\R_{\text{sme}}:=  \bigcup_{S\in  {\rm Init}(I) } \R_{\text{lex}}^{I_S} \subset \R_{\text{lex}}^{\mathbb{I}}  $$
and
$$\mathcal{V} = \{\nu: K[x] \rightarrow (\R_{\text{sme}})_\infty \mid \nu \text{ is a valuation extending } v  \}. $$
From what we have seen above, each equivalence class of valuations $[\nu]$ in $\mathbb{V}$ has a representative in $\mathcal{V}$.

For each $\alpha\in \R_{\text{sme}}$, let $\langle \Gamma, \alpha \rangle \subset \R_{\text{lex}}^{\mathbb{I}}$ be the subgroup generated by $\Gamma$ and $\alpha$. We define the following equivalence relation on $\R_{\text{sme}}$: for $\alpha, \beta\in \R_{\text{sme}}$, 
$ \alpha \sim_{\rm sme} \beta$ if and only if there exists an isomorphism of ordered groups $ \langle \Gamma, \alpha \rangle \to \langle \Gamma, \beta \rangle   $ which sends $\alpha$ to $\beta$ and acts as the identity on $\Gamma$. Take any subset $\Gamma_{\rm sme}\subset \R_{\text{sme}}$ of representatives of the quotient set $\R_{\text{sme}}/\sim_{\rm sme}$. According to \cite{NartTree}, we have
$$\Gamma \subset \Gamma_\Q \subset \Gamma_{\text{sme}} \subset \R_{\text{sme}}. $$
  The set $\Gamma_{\text{sme}}$ is complete when equipped with the order topology (\cite[Corollary 6.7]{NartTree}), that is, every non-empty subset of $\Gamma_{\text{sme}}$ has a supremum and an infimum. Indeed, the following canonical description for $\Gamma_{\text{sme}}$ (described in \cite{KuhlmannNart}  and \cite{NartTree}) guarantees this property.
  
 \subsection{Quasi-cuts} 
 
 \begin{Def}A \textbf{quasi-cut} in $\Gamma_\Q$ is a pair $\delta = (\delta^L,\delta^R)$ of subsets of $\Gamma_\Q$ such that
 	$$\delta^L\leq \delta^R \text{ and } \delta^L\cup \delta^R = \Gamma_\Q. $$
 	
 \end{Def}

\begin{Obs}We collect bellow some properties of the quasi-cuts in $\Gamma_\Q$. 
	
	\begin{itemize}
		\item The subset $\delta^L$ is an initial segment of $\Gamma_\Q$ and $\delta^L\cap \delta^R$ has at most one element. 
		
		\item We denote by ${\rm Qcuts}(\Gamma_\Q)$ the set of all quasi-cuts in $\Gamma_\Q$. We can define a total order in ${\rm Qcuts}(\Gamma_\Q)$ by setting
		$$\delta = (\delta^L,\delta^R)\leq \gamma = (\gamma^L,\gamma^R) \Llr \delta^L\subseteq \gamma^L \text{ and } \delta^R\supseteq \gamma^R. $$
		
		\item There is an embedding $\Gamma_\Q\hookrightarrow {\rm Qcuts}(\Gamma_\Q) $ that preserves the order, mapping $\gamma\in \Gamma_\Q$ to $((\Gamma_{\Q })_{\leq \gamma},(\Gamma_{\Q})_{\geq \gamma})$,
		where
		$$(\Gamma_{\Q })_{\leq \gamma} = \{ \sigma\in \Gamma_{\Q} \mid \sigma\leq \gamma \} \text{ and } (\Gamma_{\Q })_{\geq \gamma} = \{ \sigma\in \Gamma_{\Q} \mid \sigma\geq \gamma \}. $$
		This is called the \textbf{principal quasi-cut} associated to $\gamma$. We use this identification and assume that $\Gamma_\Q \subset {\rm Qcuts}(\Gamma_\Q)$.
		
		\item If $\delta = (\delta^L,\delta^R)$ is such that $\delta^L\cap \delta^R = \emptyset$, then $\delta$ is called a \textbf{cut} in $\Gamma_\Q$. Calling ${\rm Cuts}(\Gamma_\Q)$ the set of all cuts in $\Gamma_\Q$, we have 
		\[
		{\rm Qcuts}(\Gamma_\Q) = \Gamma_\Q \sqcup {\rm Cuts}(\Gamma_\Q).
		\]
		
	\item If $\delta = (\delta^L,\delta^R)$ is a cut, then $\delta^L$ does not have maximum or $\delta^R$ does not have minimum. Indeed, if $m^L\in \Gamma_{\Q}$ is a maximum of $\delta^L$ and $m^R\in \Gamma_{\Q}$ is a minimum for $\delta^R$, then by the divisibility of $\Gamma_{\Q}$ we have $m=(m^L+m^R)/2\in \Gamma_{\Q}$ and $m^L<m<m^R$, which is a contradiction.

	\end{itemize}
\end{Obs}

Take a cut $\delta =(\delta^L,\delta^R)$ on $\Gamma_\Q$.  We consider a formal symbol $x_\delta$ and look to the abelian group $\Gamma_\Q(\delta)= x_\delta\Z\oplus \Gamma_\Q$. 
%If $\delta^L$ does not have maximum, then we define the following order:
%$$mx_\delta +b\leq nx_\delta+a \Longleftrightarrow (m-n)I \leq a-b $$
%for some non-empty  final segment $I\subset \delta^L$.
%If $\delta^R$ does not have a minimum, then 
%we define the following order:
%$$mx_\delta +b\leq nx_\delta+a \Longleftrightarrow (m-n)I \leq a-b $$
%for some non-empty  initial segment $I\subset \delta^R$.
%Inequality  $(m-n)I \leq a-b$ means $(m-n)\gamma \leq a-b$ for every $\gamma\in I$. 

If $\delta^L$ does not have maximum, then we define the relation
$$mx_\delta +b\leq nx_\delta+a \Longleftrightarrow (m-n)I \leq a-b $$
for some non-empty  final segment $I\subset \delta^L$. 
On the other hand, if $\delta^L$ has a maximum, then 
we define the relation
$$mx_\delta +b\leq nx_\delta+a \Longleftrightarrow (m-n)I \leq a-b $$
for some non-empty  initial segment $I\subset \delta^R$.

\begin{Lema}
	The above relation on $\Gamma_\Q(\delta)$ is a total order which is compatible with the group structure and satisfies $\delta^L<x_\delta <\delta^R$. 
\end{Lema}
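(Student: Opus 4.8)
The plan is to verify separately the three requirements: that the relation is a total order, that it is compatible with the group operation on $\Gamma_\Q(\delta)$, and that it pins $x_\delta$ strictly between $\delta^L$ and $\delta^R$. I would treat the two cases ($\delta^L$ has no maximum, $\delta^L$ has a maximum) in parallel, since they are mirror images of one another; whatever is proven in the first case for final segments of $\delta^L$ transfers to the second case for initial segments of $\delta^R$ by the obvious symmetry $y\mapsto -y$ together with the fact that when $\delta^L$ has a maximum, $\delta^R$ has no minimum (noted in the Remark above). So below I describe the argument in the first case.

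First I would check that the relation is well defined, i.e.\ independent of the choice of the final segment $I\subset\delta^L$: if $(m-n)I\leq a-b$ holds for one nonempty final segment $I$, it holds for any nonempty final segment $I'\subseteq I$, and given two such segments their intersection is again a nonempty final segment, so the definition is unambiguous. Then for \emph{totality} and \emph{antisymmetry}: given $mx_\delta+b$ and $nx_\delta+a$, set $k=m-n$ and $c=a-b\in\Gamma_\Q$. If $k=0$ the comparison reduces to comparing $b$ and $a$ in $\Gamma_\Q$, which is already totally ordered. If $k\neq 0$, I want exactly one of $kI\leq c$ (eventually) and $kI\geq c$ (eventually) to hold; this is where I invoke that $\delta$ is a \emph{cut}: $c$ lies in $\delta^L$ or in $\delta^R$, and by shrinking $I$ one forces all of $kI$ (for $k>0$) or all of $kI$ (for $k<0$, which is an initial-segment-type condition after sign flip) onto one side of $c$ — here the crucial point is that $\delta^L$ has no maximum, so a final segment of $\delta^L$ can be taken strictly above any prescribed element of $\delta^L$, and if $c\in\delta^R$ then every element of $\delta^L$ is already $<c$. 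This simultaneously gives antisymmetry, since $kI\leq c$ and $kI\geq c$ on a common nonempty $I$ would force $c\in kI\subseteq\Gamma_\Q$ to be an element of a final segment of $k\delta^L$ that is also a lower bound for it, contradicting that $\delta^L$ has no maximum when $k\neq0$. For \emph{transitivity}, from $mx_\delta+b\leq nx_\delta+a$ via $I$ and $nx_\delta+a\leq \ell x_\delta +c$ via $J$, I intersect $I\cap J$ (a nonempty final segment) and add the two inequalities $(m-n)I\leq a-b$ and $(n-\ell)I\leq c-a$ to get $(m-\ell)I\leq c-b$; the only subtlety is the boundary subcase where one of the coefficient differences vanishes, handled directly.

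\emph{Compatibility with the group structure}: adding a fixed $px_\delta+r$ to both sides of $mx_\delta+b\leq nx_\delta+a$ changes $m-n$ and $a-b$ by nothing and by nothing respectively (the $p$'s and $r$'s cancel), so the defining condition is literally unchanged — this step is immediate. Finally, the \emph{location of $x_\delta$}: for $\gamma\in\delta^L$, the inequality $x_\delta>\gamma$ means $0\cdot x_\delta+\gamma \leq 1\cdot x_\delta+0$, i.e.\ $(0-1)I\leq 0-\gamma$, that is $-I\leq-\gamma$, i.e.\ $I\geq\gamma$ for some nonempty final segment $I\subset\delta^L$ — which exists precisely because $\delta^L$ has no maximum, so we may take $I$ to consist of elements above $\gamma$. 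Symmetrically $x_\delta<\gamma'$ for $\gamma'\in\delta^R$ because every element of $\delta^L$ is below $\gamma'$. I expect the main obstacle to be the bookkeeping in the totality/antisymmetry step, specifically making sure that the ``eventually on one side'' phenomenon is correctly formulated for negative coefficients (where a final segment of $\delta^L$ scaled by a negative integer becomes an initial-segment-type condition) and that the no-maximum hypothesis is invoked exactly where needed; once that case analysis is set up cleanly, transitivity and compatibility are routine, and the placement of $x_\delta$ falls out of the definitions.
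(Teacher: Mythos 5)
Most of your plan tracks the paper's proof closely: antisymmetry by intersecting the two witnessing segments and using that $\delta^L$ has no maximum (so the intersection has at least two elements) together with torsion-freeness, transitivity by adding the two inequalities on the intersection, compatibility being immediate, and the placement of $x_\delta$ exactly as in the paper. But your justification of \emph{totality} has a genuine gap. Writing $k=m-n$ and $c=a-b$, you propose to decide the comparison according to whether $c$ lies in $\delta^L$ or in $\delta^R$, arguing ``if $c\in\delta^R$ then every element of $\delta^L$ is already $<c$'' and, in the other case, taking a final segment of $\delta^L$ above the prescribed element. This only settles the question when $|k|=1$: the condition to be verified is $kI\leq c$ (or $kI\geq c$), and for $|k|\geq 2$ the side of the cut on which $c$ lies does not determine which of these holds. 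For instance, with $\Gamma_\Q=\Q$ and the cut at $\sqrt2$, take $k=2$, $c=2$: here $c\in\delta^R$, yet no final segment $I$ of $\delta^L$ satisfies $2I\leq 2$ (that would force $I\leq 1$), while $2I\geq 2$ does hold on the final segment $\{\gamma\in\delta^L\mid \gamma>1\}$. The correct pivot is $(a-b)/(m-n)$, not $a-b$, and to place it in $\delta^L$ or $\delta^R$ you must first know it belongs to $\Gamma_\Q$ — this is where the paper invokes the divisibility of $\Gamma_\Q$: if $(a-b)/(m-n)\in\delta^R$ take $I=\delta^L$, and if it lies in $\delta^L$ take $I=\{\gamma\in\delta^L\mid\gamma>(a-b)/(m-n)\}$, which is nonempty because $\delta^L$ has no maximum. (Alternatively one can avoid division by a cofinality argument: if no final segment satisfies $k I\geq c$, then $\{\gamma\in\delta^L\mid k\gamma<c\}$ is a downward-closed cofinal subset of $\delta^L$, hence all of $\delta^L$, and $I=\delta^L$ witnesses $kI\leq c$ — but you give neither this nor the division argument.)

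Two smaller points: your antisymmetry step tacitly uses that multiplication by $k\neq 0$ is injective on $\Gamma_\Q$ (torsion-freeness), which is exactly how the paper concludes $m=n$ there, so make that explicit; and the reduction of the ``$\delta^L$ has a maximum'' case to the other one via $y\mapsto -y$ is a reasonable substitute for the paper's ``the other case is analogous,'' provided you check that negation exchanges final segments of $\delta^L$ with initial segments of $\delta^R$ and reverses the defining inequality consistently.
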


\begin{proof}
We suppose $\delta^L$ has no maximum. The other case is analogous. 
The reflexivity of the relation is immediate. For the antisymmetry, take $mx_\delta +b$ and $nx_\delta+a$ such that $$mx_\delta +b\leq nx_\delta+a\text{ and } nx_\delta +a\leq mx_\delta+b.$$ We need to show that $n=m$ and $a=b$.  There exist non-empty final segments $I,J\subset \delta^L$  such that $(m-n)I \leq a-b$ and $(n-m)J\leq b-a$. The intersection $I\cap J$ is also a non-empty final segment and has at least two elements, because $\delta^L$ has no maximum. For every $\gamma \in I\cap J$, we have
	$$(n-m)\gamma\leq b-a \leq -(m-n)\gamma=(n-m)\gamma, $$
	that is, $(n-m)\gamma=b-a$. Hence, for different $\gamma$ and $\gamma'$ in $I\cap J$ we have $(n-m)\gamma = (n-m)\gamma'$, i.e., $(n-m)(\gamma-\gamma')=0$. This can only happens if $n=m$, since $\Gamma_\Q$ is torsion free.  Thus, $a=b$. 
	
	For the transitivity,  if $mx_\delta +b\leq nx_\delta+a$ and $nx_\delta +a\leq tx_\delta+c$, then there exist non-empty final segments $I,J\subset \delta^L$  such that $(m-n)I \leq a-b$ and $(n-t)J\leq c-a$. Hence, for every $\gamma \in I\cap J$ we have 	
	$$(m-t)\gamma = (m-n)\gamma+(n-t)\gamma \leq a-b +c-a = c-b. $$
	That is, $mx_\delta +b\leq tx_\delta+c$.  
	
	For the compatibility, take $mx_\delta +b\leq nx_\delta+a$ and any $tx_\delta +c$. We have, for every $\gamma \in I$,
	\begin{align*}
		(m-n)\gamma \leq a-b &\Longleftrightarrow ((m+t)-(n+t))\gamma \leq (a+c)-(b+c) \\
		&  \Longleftrightarrow (m+t)x_\delta +(b+c)\leq (n+t)x_\delta+(a+c)\\
		& \Longleftrightarrow (mx_\delta +b)+(tx_\delta +c)\leq  (nx_\delta +a)+(tx_\delta +c).
	\end{align*}
	
	Now we see that this is a total order. Take any $mx_\delta +b$ and $nx_\delta +a$ in $\Gamma(\delta)$. If $m=n$, then the total order on $\Gamma_\Q$ gives us that $a\leq b$ or $b\leq a$, hence by compatibility $mx_\delta +b\leq nx_\delta+a$ or $nx_\delta +a\leq mx_\delta+b$.  Suppose  $m-n>0$. If $(a-b)/(m-n)\in \delta^R$, then we take $I = \delta^L$. Hence,  for every $\gamma\in I$ we have
	$$\gamma < \frac{a-b}{m-n} \Longleftrightarrow (m-n)\gamma < a-b \Longleftrightarrow mx_\delta +b < nx_\delta+a. $$
	If $(a-b)/(m-n)\in \delta^L$, then we take $I = \{\gamma\in \delta^L\mid \gamma > (a-b)/(m-n)\}$. Hence, for every  $\gamma\in I$ we have
	$$\gamma > \frac{a-b}{m-n} \Longleftrightarrow (m-n)\gamma > a-b \Longleftrightarrow (n-m)\gamma < b-a \Longleftrightarrow nx_\delta +a < mx_\delta+b. $$
	The case where $m-n<0$ is analogous.  
	
	Last, we check that $\delta^L<x_\delta <\delta^R$. Take $b\in \delta^L$. Since $\delta^L$ does not have a maximum, we consider $I =  \{\gamma\in \delta^L\mid \gamma>b\}$. 
	%If  $\delta^R$ does not  have a minimum, take $I = \delta^R$.
	Hence, $b<\gamma$ for every $\gamma \in I$ and then $(0-1)\gamma <(0-b)$, which means $b<x_\delta$. Now take $a\in \delta^R$. Consider $I = \delta^L$.
	% If  $\delta^R$ does not  have a minimum, take $I =\{\gamma\in \delta^R\mid \gamma<a\}$.
	Hence, $a>\gamma$ for every $\gamma \in I$ and then $(1-0)\gamma <(a-0)$, which means $x_\delta<a$. 
	\end{proof}

%We now check that this a total order in $\Gamma_\Q(\delta)$ which is compatible with the group structure and satisfies $\delta^L<x_\delta <\delta^R$. 

Therefore, $x_\delta$ is a supremum for $\delta^L$ and an infimum for $\delta^R$ in $\Gamma(\delta)$. Also, note that no element $\gamma\in \Gamma_\Q$ can have the property $\delta^L<\gamma<\delta^R$, because we must have $\gamma\in \delta^L$ or $\gamma\in \delta^R$ (since $\Gamma_\Q = \delta^L\cup \delta^R$). 

\vspace{0.5cm}

For each $\alpha\in \R_{\text{sme}}$, let $\delta_\alpha$ be the quasi-cut defined by
\[
\delta_\alpha^L = \{ \gamma\in \Gamma_\Q\mid \gamma\leq \alpha  \}\mbox{ and }\delta_\alpha^R = \{ \gamma\in \Gamma_\Q\mid \gamma\geq \alpha  \}.
\]
The next lemma tells us that the equivalence relation $\sim_{\rm sme }$ can be described in terms of quasi-cuts.

\begin{Lema}\cite[Lemma 5.4]{KuhlmannNart}
	For all $\alpha,\beta\in \R_{\text{sme}}$, we have $\alpha \sim_{\rm sme } \beta$ if and only if $\delta_\alpha= \delta_\beta$. 
\end{Lema}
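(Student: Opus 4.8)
The plan is to unwind both sides into a statement about the order type of $\Gamma_\Q$ relative to the new element, and then produce the required isomorphism explicitly. First I would prove the easy direction. Suppose $\sigma:\langle\Gamma,\alpha\rangle \to \langle\Gamma,\beta\rangle$ is an order isomorphism with $\sigma|_\Gamma = \mathrm{id}$ and $\sigma(\alpha)=\beta$; since $\Gamma_\Q$ is the relative divisible closure of $\Gamma$, $\sigma$ extends uniquely to an order isomorphism $\langle\Gamma_\Q,\alpha\rangle \to \langle\Gamma_\Q,\beta\rangle$ fixing $\Gamma_\Q$ and sending $\alpha$ to $\beta$. For any $\gamma\in\Gamma_\Q$, $\gamma\leq\alpha$ in $\langle\Gamma_\Q,\alpha\rangle$ iff $\gamma=\sigma(\gamma)\leq\sigma(\alpha)=\beta$ in $\langle\Gamma_\Q,\beta\rangle$; this says exactly $\delta_\alpha^L=\delta_\beta^L$, and dually for the right parts, so $\delta_\alpha=\delta_\beta$.

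For the converse, assume $\delta_\alpha = \delta_\beta =: \delta$. I would split into two cases according to whether $\delta$ is a principal quasi-cut or a genuine cut. If $\delta$ is principal, say $\delta = (({\Gamma_\Q})_{\leq\gamma},({\Gamma_\Q})_{\geq\gamma})$ for some $\gamma\in\Gamma_\Q$, then $\gamma\in\delta_\alpha^L\cap\delta_\alpha^R$ forces $\gamma\leq\alpha\leq\gamma$, hence $\alpha=\gamma=\beta$ and there is nothing to prove. If $\delta$ is a proper cut, then no element of $\Gamma_\Q$ satisfies $\delta^L<\cdot<\delta^R$, so both $\alpha$ and $\beta$ lie strictly outside $\Gamma_\Q$, i.e. $\langle\Gamma,\alpha\rangle = \Gamma\oplus\Z\alpha$ and $\langle\Gamma,\beta\rangle = \Gamma\oplus\Z\beta$ are free, and the map $\sigma(a + n\alpha) = a + n\beta$ ($a\in\Gamma$, $n\in\Z$) is a well-defined group isomorphism fixing $\Gamma$ and sending $\alpha$ to $\beta$. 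It remains to check $\sigma$ is order-preserving.

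The main obstacle is precisely this last verification, and it is where the hypothesis $\delta_\alpha=\delta_\beta$ (as opposed to some weaker condition) gets used. One reduces, via compatibility with the group law, to showing: for $n>0$ and $a,b\in\Gamma$ with $b-a$ written (using divisibility of $\Gamma_\Q$) so that $(b-a)/n\in\Gamma_\Q$, one has $n\alpha \leq b-a$ in $\langle\Gamma,\alpha\rangle$ iff $n\beta\leq b-a$ in $\langle\Gamma,\beta\rangle$, and similarly with the inequality strict; and that $n\alpha < b$ versus $n\alpha > b$ for $b\in\Gamma$ is governed by whether $b/n \in \delta^L$ or $\delta^R$. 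Concretely, $n\alpha \leq \gamma$ for $\gamma\in\Gamma_\Q$ iff $\alpha\leq\gamma/n$ iff $\gamma/n \in \delta_\alpha^R$; since $\delta_\alpha^R = \delta_\beta^R$, this holds iff $\gamma/n\in\delta_\beta^R$ iff $n\beta\leq\gamma$, and the strict versions follow the same way after noting $\alpha,\beta\notin\Gamma_\Q$ so $\alpha = \gamma/n$ and $\beta=\gamma/n$ are both impossible. This shows $\sigma$ and $\sigma^{-1}$ are both order-preserving on a set of generators of the orders (comparisons of the form $mx_\delta+b$ versus $nx_\delta+a$ reduce, after subtraction, to comparisons of $(m-n)\alpha$ with $a-b$), hence $\sigma$ is an order isomorphism, giving $\alpha\sim_{\rm sme}\beta$.
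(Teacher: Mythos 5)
This lemma is only quoted in the paper from \cite[Lemma 5.4]{KuhlmannNart}; no proof is given here, so there is no internal argument to compare against. On its own merits, your plan is a correct and complete proof. The easy direction can be made even more direct: for $\gamma\in\Gamma_\Q$ choose $n>0$ with $n\gamma\in\Gamma$; since the ambient group $\R_{\text{lex}}^{\mathbb{I}}$ is an ordered $\Q$-vector space, $\gamma\leq\alpha$ iff $n\gamma\leq n\alpha$, and applying $\sigma$ (which fixes $n\gamma\in\Gamma$ and sends $n\alpha$ to $n\beta$) gives $\gamma\leq\alpha$ iff $\gamma\leq\beta$; this avoids the appeal to extending $\sigma$ to $\langle\Gamma_\Q,\alpha\rangle$, which is also legitimate but quietly uses uniqueness of the compatible order on (subgroups of) the divisible hull. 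Your converse is the right argument: the principal case forces $\alpha=\gamma=\beta$, and in the cut case the reduction of order-preservation to comparisons of $m\alpha$ with elements of $\Gamma$, decided by whether the corresponding element of $\Gamma_\Q$ lies in $\delta^L$ or $\delta^R$, is exactly where $\delta_\alpha=\delta_\beta$ is used, and strictness is correctly handled via $\alpha,\beta\notin\Gamma_\Q$.

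Three small points to tighten, none of them substantive gaps: (1) justify that $\langle\Gamma,\alpha\rangle=\Gamma\oplus\Z\alpha$ is a direct sum by noting that $n\alpha\in\Gamma$ for some $n\neq0$ would give $\alpha=(n\alpha)/n\in\Gamma_\Q$, contradicting that $\delta$ is a cut (and ``free'' is not quite the right word, since $\Gamma$ itself need not be free); (2) record the case $m-n<0$ in the order-preservation check, which is symmetric to $m-n>0$ after multiplying the inequality by $-1$; (3) make explicit that the orders on $\langle\Gamma,\alpha\rangle$ and $\langle\Gamma,\beta\rangle$ are those induced from $\R_{\text{sme}}\subset\R_{\text{lex}}^{\mathbb{I}}$, which is what licenses dividing by positive integers throughout.
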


It is possible to write the following isomorphism of ordered sets (see Section 6.4 of \cite{NartTree}):
\begin{align*} \Gamma_{\text{sme}} &\longrightarrow {\rm Qcuts}(\Gamma_\Q)\\
\alpha	& \longmapsto \delta_\alpha.
\end{align*}
If  $\delta\in {\rm Cuts}(\Gamma_\Q)$, then we can consider the formal symbol $x_\delta$ as the $\alpha\in \Gamma_{\text{sme}}$ such that $\delta = \delta_\alpha$.

\section{Limits of families of valuations}\label{Secincreafamil}
In this section we review some results about increasing families of valuations extending a fixed valuation $v$ on $K$. Let $\Lambda=\R_{\text{sme}}$ be the group constructed in the previous section. By what we saw before, every class of valuations in $\mathbb{V}$ admits a representative in
$$\mathcal{V} = \{\nu: K[x] \rightarrow \Lambda_\infty \mid \nu \text{ is a valuation extending } v  \}. $$
Also, since all the valuations on $\mathcal{V}$ are mapped to a common group, $\mathcal{V}$ admits a natural partial order.
\begin{Prop}\cite[Section 4]{Rig}\label{treescufutue}
The partially ordered set $\mathcal V$ is a tree, i.e., for every $\mu\in \mathcal V$, the set
\[
\left]-\infty, \mu\right[:=\{\nu\in \mathcal V\mid \nu< \mu\}
\]
is totally ordered.
\end{Prop}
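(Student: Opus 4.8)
The plan is to reduce the tree property to a comparability statement and then to prove the latter using the key-polynomial machinery behind the construction of Section~\ref{constructuon}. Since the elements of $\mathcal V$ are honest maps $K[x]\to\Lambda_\infty$, the relation $\leq$ is a genuine partial order ($\nu\leq\mu\leq\nu$ forces $\nu=\mu$), and every $\nu\in\,]-\infty,\mu[$ satisfies $\nu\leq\mu$; so it is enough to show: if $\nu_1,\nu_2\in\mathcal V$ satisfy $\nu_i\leq\mu$ for some $\mu\in\mathcal V$, then $\nu_1$ and $\nu_2$ are comparable. I would assume $\nu_1\neq\mu$ and $\nu_2\neq\mu$ (otherwise, say $\nu_1=\mu$ and then $\nu_2\leq\mu=\nu_1$), and argue by contradiction from the existence of $f$ with $\nu_1(f)<\nu_2(f)$ and $g$ with $\nu_1(g)>\nu_2(g)$.

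The main tool is the degree filtration $K[x]_1\subseteq K[x]_2\subseteq\cdots$. For $i=1,2$ let $d_i\geq 1$ be the least degree of a monic polynomial on which $\nu_i$ is strictly smaller than $\mu$; such a polynomial exists since $\nu_i\neq\mu$, and $d_i\geq 1$ because $\nu_i=\mu=v$ on $K$. Dividing out leading coefficients shows that $\nu_i$ and $\mu$ agree on $K[x]_{d_i}$, so after relabelling so that $d_1\leq d_2$ all three of $\nu_1,\nu_2,\mu$ agree on $K[x]_{d_1}$. The decisive step is a rigidity claim: one can choose a monic polynomial $\phi$ of degree $d_1$ with $\nu_1(\phi)<\mu(\phi)$ — and, if $d_1=d_2$, with $\nu_2(\phi)<\mu(\phi)$ as well (comparing $\phi$ with a key polynomial of $\nu_2$ of degree $d_2$ shows this can be arranged) — such that $\nu_i$ coincides with the augmented valuation $\theta_i$ defined by $\theta_i(f)=\min_j\{\mu(f_j)+j\,\nu_i(\phi)\}$ on the $\phi$-expansion $f=\sum_j f_j\phi^j$, $\deg f_j<d_1$. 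The inequality $\theta_i\leq\nu_i$ is the easy \textbf{(V2')} direction; that $\theta_i$ is a valuation should come from Theorem~\ref{lemasobreooutrocoiso} with $q=\phi$, $d=d_1$, $\gamma=\nu_i(\phi)$, whose hypothesis \textbf{(ii)} (that $\mu(c)=\mu(\overline f\,\overline g)<\mu(a)+\nu_i(\phi)$ whenever $\overline f\,\overline g=a\phi+c$ with all terms in $K[x]_{d_1}$) is exactly what pins down the admissible $\phi$ and uses the minimality of $d_1$ together with $\nu_i(\phi)<\mu(\phi)$; and $\nu_i\leq\theta_i$ is proved by a MacLane-type argument (a monic polynomial of least degree $>d_1$ on which $\nu_i$ exceeds $\theta_i$ would make the defining minimum occur at two indices, producing a monic polynomial of degree $\leq d_1$ with $\nu_i<\mu$, against the choice of $d_i$).

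Granting this, the conclusion is quick. If $d_1=d_2$: since $\nu_1(\phi),\nu_2(\phi)\leq\mu(\phi)$ lie in the totally ordered group $\Lambda$, say $\nu_1(\phi)\leq\nu_2(\phi)$, and then for every $f$
\[
\nu_1(f)=\min_j\{\mu(f_j)+j\,\nu_1(\phi)\}\leq\min_j\{\mu(f_j)+j\,\nu_2(\phi)\}=\nu_2(f),
\]
so $\nu_1\leq\nu_2$. If $d_1<d_2$: then $\phi$ has degree $<d_2$, so $\nu_2(\phi)=\mu(\phi)\geq\nu_1(\phi)$, and likewise $\nu_2(f_j)=\mu(f_j)$ for every coefficient $f_j$ of degree $<d_1$; hence for every $f$
\[
\nu_2(f)\geq\min_j\{\nu_2(f_j)+j\,\nu_2(\phi)\}=\min_j\{\mu(f_j)+j\,\nu_2(\phi)\}\geq\min_j\{\mu(f_j)+j\,\nu_1(\phi)\}=\nu_1(f),
\]
so again $\nu_1\leq\nu_2$. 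Either way we contradict the incomparability of $\nu_1$ and $\nu_2$.

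The step I expect to be the main obstacle is the rigidity claim — producing an admissible $\phi$ (a key polynomial for $\nu_i$ of degree $d_i$) and proving $\nu_i\leq\theta_i$ — since it amounts to the statement that a valuation lying below $\mu$ is a truncation of $\mu$, which is the technical core of the MacLane–Vaquié description of $\mathcal V$. Theorem~\ref{lemasobreooutrocoiso}, together with Lemma~\ref{lemNuaGammaMenorIgual} in the depth-one (monomial) situation, is exactly the machinery set up to carry this out.
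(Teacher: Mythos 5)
The paper offers no proof of this proposition (it is quoted from \cite[Section 4]{Rig}), so your argument must stand on its own. Its architecture is sound: reducing the tree property to comparability of $\nu_1,\nu_2\leq\mu$ and showing each $\nu_i$ is a truncation of $\mu$ is exactly the right strategy, and your treatment of the case $d_1<d_2$ is correct. The genuine gap is in the pivotal ``rigidity claim'' itself, which carries all the content and which you only sketch. For the hard direction $\nu_i\leq\theta_i$ you propose to reach a contradiction by ``producing a monic polynomial of degree $\leq d_1$ with $\nu_i<\mu$, against the choice of $d_i$''; that is not a contradiction, since monic polynomials of degree exactly $d_1$ with $\nu_i<\mu$ certainly exist ($\phi$ itself is one). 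The correct contradiction is with the hypothesis $\nu_i\leq\mu$, not with the minimality of $d_i$: if $f=\sum_j f_j\phi^j$ and $\nu_i(f)$ exceeded $\theta_i(f)$, collect the terms attaining the minimum, say $g=\sum_{j\in J}f_j\phi^j$ (the remaining terms have strictly larger $\nu_i$-value, so $\nu_i(g)>\theta_i(f)$ too), and factor $g=\phi^{j_0}\tilde g$ with $j_0=\min J$. In $\tilde g$ the $\phi$-free coefficient $f_{j_0}$ is the unique term of minimal $\mu$-value, because $\mu(\phi)>\nu_i(\phi)$ pushes every other term strictly above $\mu(f_{j_0})$; hence $\mu(\tilde g)=\mu(f_{j_0})$, while $\nu_i(\tilde g)=\nu_i(g)-j_0\nu_i(\phi)>\mu(f_{j_0})$, contradicting $\nu_i\leq\mu$. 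This argument shows moreover that \emph{any} monic $\phi$ of degree $d_i$ with $\nu_i(\phi)<\mu(\phi)$ works, and it makes Theorem~\ref{lemasobreooutrocoiso} unnecessary: once $\nu_i=\theta_i$ pointwise, $\theta_i$ is a valuation because $\nu_i$ is. As you set things up, by contrast, verifying hypothesis \textbf{(ii)} of that theorem for your $\phi$ is an unproved step of essentially the same difficulty as the rigidity claim.

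A second, smaller gap: when $d_1=d_2$ you assert that a common $\phi$ ``can be arranged'' by comparing with a key polynomial of $\nu_2$, with no argument. This can be bypassed rather than arranged: take $\phi$ admissible for $\nu_1$; either $\nu_2(\phi)<\mu(\phi)$, so $\phi$ is admissible for $\nu_2$ as well and your comparison of the two truncations applies, or $\nu_2(\phi)=\mu(\phi)$, in which case the very computation you use for $d_1<d_2$ already yields $\nu_1\leq\nu_2$. With these two repairs your proof is complete, and it is in substance the truncation argument behind the cited result.
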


\begin{Prop}\cite[Corollary 2.3]{novbarnabe} Let $\{\nu_i\}_{i\in I}$ be a totally ordered set in $\mathcal{V}$. For every $f\in K[x]$, either $\{\nu_i(f)\}_{i\in I}$ is strictly increasing, or there exists $i_0\in I$ such that $\nu_i(f)=\nu_{i_0}(f)$ for every $i\in I$ with $i\geq i_0$.  
\end{Prop}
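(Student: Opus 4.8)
The plan is to fix $f\in K[x]$ and analyze the family of values $\{\nu_i(f)\}_{i\in I}$, which lies in the totally ordered group $\Lambda=\R_{\text{sme}}$. Since $\{\nu_i\}_{i\in I}$ is totally ordered and $\nu\leq\mu$ means $\nu(g)\leq\mu(g)$ for every $g\in K[x]$, the family $\{\nu_i(f)\}_{i\in I}$ is a monotone (weakly increasing) net in $\Lambda$, indexed by the totally ordered set $I$. The dichotomy to be proved is: either this net is strictly increasing (i.e.\ never eventually stabilizes), or it is eventually constant. So the whole statement reduces to showing that a monotone net in a totally ordered set indexed by a totally ordered set cannot do anything ``in between'' — but of course this is false for arbitrary monotone nets, so the valuation-theoretic input must be used.

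The key valuation-theoretic fact I would invoke is the structure of $f$-expansions / the behaviour of values under the comparison of valuations in $\mathcal V$. Concretely, first I would reduce to the case where $f$ is irreducible (or even to $f=x-a$-type atoms) by writing a general $f$ as a product of irreducibles and using \textbf{(V1)}: if the claim holds for each irreducible factor $f_j$, then $\nu_i(f)=\sum_j m_j\nu_i(f_j)$, and a finite sum of families each of which is either strictly increasing or eventually constant is again of this form (if at least one summand is strictly increasing the sum is; otherwise all stabilize past some common index, using that $I$ is totally ordered so finitely many ``eventual'' indices have a common upper bound). Then for irreducible $f$ I would argue that if $\nu_{i_1}(f)=\nu_{i_2}(f)$ for some $i_1<i_2$, then in fact $\nu_i(f)=\nu_{i_1}(f)$ for all $i\geq i_1$: this is where one uses that $\nu_{i_1}\le\nu_i\le\nu_{i_2}$ forces $\nu_i(f)$ to be squeezed, $\nu_{i_1}(f)\le\nu_i(f)\le\nu_{i_2}(f)=\nu_{i_1}(f)$, hence equality. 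Wait — that squeeze argument already gives it directly without reducing to irreducibles: if the net is \emph{not} strictly increasing, there are $i_1<i_2$ in $I$ with $\nu_{i_1}(f)=\nu_{i_2}(f)$; then for every $i\ge i_1$, either $i\le i_2$ (so $\nu_{i_1}(f)\le\nu_i(f)\le\nu_{i_2}(f)$ gives equality) or $i\ge i_2$, and for those one still needs an argument.

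So the real content is the tail $i\ge i_2$. Here I would use a genuine valuation-theoretic lemma: for $\nu_{i_1}\le\nu_i$ in $\mathcal V$ with $\nu_{i_1}(f)=\nu_i(f)$ for a single $f$, one cannot conclude much in general, so instead I would appeal to the $f$-expansion / key-polynomial machinery — specifically, that if $\nu\le\mu$ in $\mathcal V$ and $\mu\ne\nu$ then there is a \emph{first} polynomial (a key polynomial $\phi$ for $\nu$) whose value strictly increases, and every polynomial whose value increases at all must be divisible, in the relevant graded sense, by such a $\phi$. Applying this with the pair $(\nu_{i_1},\nu_i)$: since $\{\nu_i\}$ is totally ordered, there is a coherent notion of ``the set of polynomials whose value has stopped increasing by stage $i_1$'', and once $\nu(f)$ stabilizes it stays stabilized for all larger $\nu$ in the family because the key polynomials only grow in degree. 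Concretely, I would cite or reprove: if $\nu_{i_1}(f)=\nu_{i_2}(f)$ and $\nu_{i_2}\le\nu_i$, then $\nu_{i_2}(f)\le\nu_i(f)$ and equality of $\nu_{i_1}(f)$ and $\nu_{i_2}(f)$ together with $\nu_{i_1}\le\nu_{i_2}$ implies $f$ is not divisible by the key polynomial governing the step from $\nu_{i_2}$ onward, forcing $\nu_i(f)=\nu_{i_2}(f)$.

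The main obstacle I anticipate is precisely making this tail argument rigorous for arbitrary (possibly limit) totally ordered families, without a top element and without assuming the family is a chain of augmentations in the Mac Lane--Vaquié sense: one must show that ``$\nu(f)$ has stabilized'' is an upward-closed condition along the chain. I expect the cleanest route is to show the contrapositive locally — if $\nu_{i_1}(f)<\nu_i(f)$ were to fail to be strict somewhere and then resume increasing, one could build, via the tree structure (Proposition~\ref{treescufutue}) and the graded-algebra description of each step, a key polynomial of degree $\le\deg f$ that reappears, contradicting the strict increase of degrees of successive key polynomials along a chain. Alternatively, and more simply, I would try to prove a two-valuation lemma first: \emph{if $\nu\le\nu'\le\nu''$ in $\mathcal V$ and $\nu(f)=\nu''(f)$, then $\nu'(f)=\nu(f)$} — which is just the squeeze — and then \emph{if $\nu\le\nu'$ and $\nu(f)=\nu'(f)$, then $\nu''(f)=\nu'(f)$ for all $\nu''\ge\nu'$ in the chain}; the second is the hard half and is where the key-polynomial degree argument is unavoidable. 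Granting those two lemmas, the proposition follows: if the net is not strictly increasing, pick the witnessing $i_1<i_2$, get equality on $[i_1,i_2]$ by the squeeze, and equality on $[i_2,\infty)$ by the second lemma, so it is eventually constant with $i_0=i_1$.
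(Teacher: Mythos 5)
First, note that the paper does not prove this proposition: it is quoted from \cite[Corollary 2.3]{novbarnabe}, so the only question is whether your argument actually establishes it. The parts of your proposal that are solid are the easy parts: the squeeze handles the interval between $i_1$ and $i_2$, and you correctly locate the entire content of the statement in the tail, namely in the lemma ``if $\nu<\mu\le\rho$ in $\mathcal V$ and $\nu(f)=\mu(f)$, then $\rho(f)=\mu(f)$'' (your ``second lemma''). But that lemma \emph{is} the cited result, and your sketch of it has a genuine gap. You justify it by (a) ``the key polynomials only grow in degree'' and (b) ``equality of $\nu_{i_1}(f)$ and $\nu_{i_2}(f)$ implies $f$ is not divisible by the key polynomial governing the step from $\nu_{i_2}$ onward''. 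Claim (a) is circular as stated: if $\phi$ is monic of minimal degree with $\nu_{i_1}(\phi)<\nu_{i_2}(\phi)$ and $\psi$ is monic of minimal degree with $\nu_{i_2}(\psi)<\nu_i(\psi)$, then $\deg\psi\ge\deg\phi$ says precisely that every $h$ with $\deg h<\deg\phi$ (which automatically satisfies $\nu_{i_1}(h)=\nu_{i_2}(h)$) also satisfies $\nu_{i_2}(h)=\nu_i(h)$ --- i.e.\ it is the very stability-propagation statement being proved, applied to lower-degree polynomials. It can only be used inside a genuine induction on degree, which you do not set up. Claim (b) is not derived at all: the hypothesis $\nu_{i_1}(f)=\nu_{i_2}(f)$ is (via the divisibility characterization you invoke) a non-divisibility statement by $\inv_{\nu_{i_1}}\phi$ in the graded ring of $\nu_{i_1}$, while the conclusion you need is non-divisibility by $\inv_{\nu_{i_2}}\psi$ in the graded ring of $\nu_{i_2}$; these live in different graded rings and concern different polynomials, and no bridge between them is given, so as written (b) is a restatement of the desired conclusion rather than an argument for it.

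To see that the missing step is where the real work lies, grant yourself the strongest version of the tool you cite: for any $\sigma<\tau$ in $\mathcal V$, the set $\{g:\sigma(g)<\tau(g)\}$ consists exactly of the $g$ whose initial form is divisible by that of a monic minimal-degree element. Applying this to the pair $(\nu_{i_1},\nu_i)$, its minimal-degree unstable polynomial $\theta$ satisfies $\deg\theta\le\deg\phi$, and $\inv_{\nu_{i_1}}\theta$ divides both $\inv_{\nu_{i_1}}\phi$ and $\inv_{\nu_{i_1}}f$. If $\deg\theta=\deg\phi$, the two initial forms are associates and you contradict $\nu_{i_1}(f)=\nu_{i_2}(f)$; but if $\deg\theta<\deg\phi$, then $\theta$ itself satisfies $\nu_{i_1}(\theta)=\nu_{i_2}(\theta)<\nu_i(\theta)$, i.e.\ you are staring at the same forbidden configuration one degree down, and closing this loop is exactly the induction your sketch omits (or, more simply, the content of Section 2 of \cite{novbarnabe}, which is what the paper relies on). So the proposal correctly identifies the crux but does not prove it; everything it does prove (the squeeze, the reduction to irreducible factors, the handling of the index set) is the routine part.
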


A polynomial $f\in K[x]$ is $\mathfrak{v}$\textbf{-stable} if there exists $i_f\in I$ such that $\nu_i(f)=\nu_{i_f}(f)$ for every $i\geq i_f$. In particular, $\nu_i(f)\leq \nu_{i_f}(f)$ for every $i\in I$.

\begin{Lema}\cite[Lemma 4.5]{josneicaio2}\label{lemMenorIgualMu} Let $\mathfrak{v}$ be an increasing family of valuations without largest element. Consider $\nu\in\mathcal{V}$ such that $\nu\geq \nu_i$ for every $\nu_i\in \mathfrak{v}$.  Fix $k\in I$. We have the following. 
	\begin{enumerate}
		
		\item If $\nu_k(f)<\nu(f)$, then $\nu_k(f)<\nu_j(f)$ for every $j>k$.
		
		\vspace{0.1cm} 
		
		\item We have $\nu_k(f) = \nu(f)$  if and only if $\nu_k(f)=\nu_j(f)$ for every $j\geq k$. In other words, $f$ is $\mathfrak{v}$-stable if and only if 
		$\nu_k(f)=\nu(f)$ for some $k\in I$.
	\end{enumerate}
\end{Lema}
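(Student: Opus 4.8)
\textbf{Proof proposal for Lemma \ref{lemMenorIgualMu}.}

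The plan is to prove both parts essentially as consequences of the dichotomy in the preceding Proposition (\cite[Corollary 2.3]{novbarnabe}), applied to the totally ordered family $\mathfrak{v}=\{\nu_i\}_{i\in I}$, together with a short order argument comparing the family to the upper bound $\nu$. First I would fix $f\in K[x]$ and record the basic monotonicity facts: since the $\nu_i$ form an increasing family and $\nu\geq\nu_i$ for all $i$, the sequence $\{\nu_i(f)\}_{i\in I}$ is nondecreasing and bounded above by $\nu(f)$.

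For part (1), suppose $\nu_k(f)<\nu(f)$. I want to show $\nu_k(f)<\nu_j(f)$ for every $j>k$. Suppose not; then there is some $j>k$ with $\nu_j(f)=\nu_k(f)$ (equality, by monotonicity). Applying the dichotomy of \cite[Corollary 2.3]{novbarnabe} to the truncated family $\{\nu_i\}_{i\geq k}$ (still totally ordered in $\mathcal V$), and using that $\nu_j(f)=\nu_k(f)$ rules out the strictly increasing alternative for this truncated family, I get an index $i_0\geq k$ with $\nu_i(f)=\nu_{i_0}(f)$ for all $i\geq i_0$; combined with monotonicity and $\nu_j(f)=\nu_k(f)$ this forces $\nu_i(f)=\nu_k(f)$ for \emph{all} $i\geq k$, hence for all $i\in I$ (as the family has no largest element and is totally ordered, any index is $\leq$ some index $\geq k$, so by monotonicity again all values coincide with $\nu_k(f)$). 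The remaining point is to upgrade this constant value to $\nu(f)$, i.e.\ to show $\nu(f)=\nu_k(f)$, which would contradict the hypothesis $\nu_k(f)<\nu(f)$. This is where I expect the main work: one must argue that the upper bound $\nu$ cannot exceed the eventual (here, constant) value of the family on $f$. The cleanest route is to invoke that $\nu$ agrees with the supremum of the family on $f$ — concretely, writing $f$ in a suitable expansion and using that the relevant ``stable'' coefficients have already stabilized, so no further increase is possible; alternatively, if the family is the one built in the later sections via augmentations, the key polynomials/expansions give this directly. So the heart of the proof is establishing that $\nu(f)$ equals the eventual value $\lim_i \nu_i(f)$ whenever the latter stabilizes.

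Granting that, part (2) is almost immediate. If $\nu_k(f)=\nu(f)$, then by monotonicity $\nu_k(f)\leq\nu_j(f)\leq\nu(f)=\nu_k(f)$ for all $j\geq k$, so $\nu_j(f)=\nu_k(f)$ for all $j\geq k$, which by definition says $f$ is $\mathfrak v$-stable (with $i_f=k$). Conversely, if $\nu_k(f)=\nu_j(f)$ for all $j\geq k$, then the family's value on $f$ is eventually constant equal to $\nu_k(f)$; by the fact highlighted above (that $\nu(f)$ equals the eventual value of the family), we get $\nu(f)=\nu_k(f)$. Thus $f$ is $\mathfrak v$-stable iff $\nu_k(f)=\nu(f)$ for some $k$, and the contrapositive of part (1) also follows: if $f$ is not $\mathfrak v$-stable then $\nu_k(f)<\nu(f)$ for every $k$, whence the values are strictly increasing cofinally. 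The main obstacle throughout is exactly the identification of $\nu(f)$ with the eventual value of $\{\nu_i(f)\}$; once that is in hand (either from the structure of $\nu$ as a supremum, or cited from the augmentation machinery), everything else is bookkeeping with monotone sequences and the Corollary's dichotomy.
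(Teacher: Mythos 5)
The paper itself does not prove this lemma (it is quoted from \cite[Lemma 4.5]{josneicaio2}), so the question is whether your argument would stand on its own; it does not, because the step you explicitly ``grant'' is the entire content of the statement. The claim that an \emph{arbitrary} upper bound $\nu\geq\nu_i$ cannot exceed the eventual value of $\{\nu_i(f)\}_{i\in I}$ once that sequence stabilizes is precisely the hard direction of part (2), and your proof of part (1) is routed through it; so the proposal reduces the lemma to itself. The justification you sketch for it is moreover circular: $\nu$ is not assumed to be the supremum of $\mathfrak{v}$, and the identification of the supremum via stable values (Proposition \ref{lemVstable}) is exactly the kind of statement that is proved \emph{using} this lemma, not the other way around; nothing in the dichotomy of \cite[Corollary 2.3]{novbarnabe} yields it. There is a second, independent gap in part (1): from $\nu_k(f)=\nu_j(f)$ for one $j>k$ you conclude $\nu_i(f)=\nu_k(f)$ for all $i\geq k$, but the cited dichotomy only provides constancy from some $i_0$, which may lie beyond $j$; a nondecreasing sequence that is constant on $[k,j]$, then increases, then stabilizes is not excluded by ``strictly increasing or ultimately constant.'' Ruling this out is essentially part (1) applied inside the family, so this step is as deep as what you are trying to prove.

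What is actually needed is a genuine comparison result for comparable valuations, coming from the MacLane--Vaqui\'e machinery of key polynomials and graded algebras rather than from monotone-sequence bookkeeping: for valuations $\mu<\rho\leq\nu$ in $\mathcal{V}$, the set $\{f\in K[x]\mid \mu(f)<\rho(f)\}$ coincides with $\{f\in K[x]\mid \mu(f)<\nu(f)\}$ (both are characterized by divisibility, in the graded algebra of $\mu$, by the initial form of a monic polynomial of minimal degree on which the values differ). Granting that, part (1) is immediate with $\mu=\nu_k$, $\rho=\nu_j$, and part (2) follows from part (1) together with the hypothesis that $\mathfrak{v}$ has no largest element --- your bookkeeping for that last implication is correct. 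So the structure ``(2) follows from (1) plus an agreement-with-the-limit principle'' is reasonable, but the principle itself is the theorem, and it cannot be obtained from the tools you invoke.
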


\begin{Def}
	Let $\mathfrak{v}= \{\nu_i\}_{i\in I}$ be a totally ordered set in $\mathcal{V}$ and $\mu\in \mathcal{V}$. We say that $\mu$ is the\textbf{ supremum} of $\mathfrak{v}$ if $\nu_i\leq \mu$ for every $i\in I$ and if $\mu'\in \mathcal{V}$ is such that $\nu_i\leq \mu'$ for every $i\in I$, then  $\mu\leq\mu'$. We denote $\mu = \displaystyle \sup_{i\in I}\nu_i = \sup \mathfrak{v}$.
\end{Def}

\begin{Prop}\cite[Proposition 4.9]{josneicaio2}\label{lemVstable} Let $\mathfrak{v}= \{\nu_i\}_{i\in I}$ be a totally ordered set in $\mathcal{V}$
	such that 
	%$\mathfrak{v}$ has no maximal element and
	every $f\in K[x]$ is $\mathfrak{v}$-stable. 
	Define
\[
\nu_\mathfrak{v} : K[x] \lra \Lambda_\infty\mbox{ by }\nu_\mathfrak{v}(f) :=  \nu_{i_f}(f).
\] 
	Then $\nu_\mathfrak{v}\in \mathcal V$ and it is the supremum of $\mathfrak v$. 
	
\end{Prop}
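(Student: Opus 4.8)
The plan is to verify that $\nu_{\mathfrak v}$ is a valuation taking values in $\Lambda_\infty$, and then to check the two defining properties of a supremum. First I would note that $\nu_{\mathfrak v}$ is well-defined: since every $f$ is $\mathfrak v$-stable, the value $\nu_{i_f}(f)$ does not depend on the choice of $i_f$ (any index $i \geq i_f$ gives the same value by the definition of stability), so the formula $\nu_{\mathfrak v}(f) = \nu_{i_f}(f)$ makes unambiguous sense and lands in $\Lambda_\infty$.

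Next I would check axioms \textbf{(V1)}, \textbf{(V2)}, \textbf{(V3)}. For \textbf{(V3)}, $\nu_{\mathfrak v}(1) = \nu_i(1) = 0$ and $\nu_{\mathfrak v}(0) = \nu_i(0) = \infty$ for any $i$, since each $\nu_i$ is a valuation and $1, 0$ are trivially stable with $i_1 = i_0$ arbitrary. For \textbf{(V1)} and \textbf{(V2)}, the key point is that since $I$ is totally ordered (so $\mathfrak v$ is a chain) and $f, g, fg, f+g$ are all $\mathfrak v$-stable, I can pick a common index $k \in I$ with $k \geq i_f, i_g, i_{fg}, i_{f+g}$ simultaneously (using that a finite subset of a totally ordered set has an upper bound among its elements — here just take the largest of the four indices). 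Then $\nu_{\mathfrak v}(f) = \nu_k(f)$, $\nu_{\mathfrak v}(g) = \nu_k(g)$, $\nu_{\mathfrak v}(fg) = \nu_k(fg)$ and $\nu_{\mathfrak v}(f+g) = \nu_k(f+g)$, and the desired identities/inequalities follow at once from the fact that $\nu_k$ is a valuation. This also shows $\nu_{\mathfrak v} \in \mathcal V$, since it is a valuation $K[x] \to \Lambda_\infty$ whose restriction to $K$ agrees with some (hence every) $\nu_k|_K = v$.

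For the supremum property, I would first show $\nu_i \leq \nu_{\mathfrak v}$ for every $i \in I$. Fix $i$ and $f \in K[x]$. Since $f$ is $\mathfrak v$-stable with stabilizing index $i_f$, and the sequence $\{\nu_j(f)\}_{j \in I}$ is increasing, we have $\nu_i(f) \leq \nu_{i_f}(f) = \nu_{\mathfrak v}(f)$ — splitting into the cases $i \leq i_f$ (monotonicity of the increasing chain) and $i \geq i_f$ (then $\nu_i(f) = \nu_{i_f}(f)$ by stability). Hence $\nu_i \leq \nu_{\mathfrak v}$. Next, suppose $\mu' \in \mathcal V$ satisfies $\nu_i \leq \mu'$ for all $i$; I must show $\nu_{\mathfrak v} \leq \mu'$, i.e. $\nu_{\mathfrak v}(f) \leq \mu'(f)$ for every $f$. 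But $\nu_{\mathfrak v}(f) = \nu_{i_f}(f) \leq \mu'(f)$ directly, since $\nu_{i_f} \leq \mu'$ by hypothesis. This completes the argument.

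I do not expect a serious obstacle here; the statement is essentially a bookkeeping exercise once one has the right viewpoint. The one point requiring a little care is the well-definedness together with the simultaneous choice of a common stabilizing index $k$ for the finitely many polynomials involved in \textbf{(V1)}/\textbf{(V2)} — this is exactly where the hypothesis that $I$ is totally ordered (Proposition \ref{treescufutue}, giving that $\mathfrak v$ is a chain) is used, so that finitely many indices have a maximum among themselves. If instead the subtlety is whether $\nu_{\mathfrak v}$ really extends $v$ rather than some other valuation on $K$, I would simply observe that each $\nu_i$ restricts to $v$ on $K$ and constants are stable with value independent of index, so $\nu_{\mathfrak v}|_K = v$.
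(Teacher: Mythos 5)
Your proposal is correct; note that the paper does not prove Proposition \ref{lemVstable} itself but cites it from \cite{josneicaio2}, and your argument --- well-definedness of $\nu_{\mathfrak v}$, verification of \textbf{(V1)}--\textbf{(V3)} at a common stabilizing index $k$ chosen as the largest of the finitely many indices involved, the restriction to $K$ being $v$, and the two defining properties of the supremum via the case split $i\leq i_f$ versus $i\geq i_f$ --- is exactly the standard argument for this statement. No gaps.
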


\begin{Cor}\cite[Theorem 1.2 and Lemma 4.1]{NartTree}\label{CorVstableAlgebraic}
The valuation $\nu_{\mathfrak v}$ in Proposition~\ref{lemVstable} is valuation-algebraic. 
\end{Cor}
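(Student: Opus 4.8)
The plan is to show that $\nu_{\mathfrak v}$, the supremum of a totally ordered family $\mathfrak v = \{\nu_i\}_{i\in I}$ in which \emph{every} polynomial is $\mathfrak v$-stable, cannot have non-trivial support, cannot be value-transcendental, and cannot be residue-transcendental; then by the classification of extensions of $v$ to $K[x]$, it must be valuation-algebraic. Non-trivial support is the easiest to dispose of: if $\nu_{\mathfrak v}(f) = \infty$ for some $f \neq 0$, then by the definition $\nu_{\mathfrak v}(f) = \nu_{i_f}(f)$ we would have $\nu_{i_f}(f) = \infty$, contradicting that each $\nu_i$ is a valuation extending $v$ (hence with trivial support on $K[x]$, since $\nu_i \in \mathcal V$ extends $v$ on the field $K$ — one should check the convention in the paper that elements of $\mathcal V$ are Krull on $K[x]$; if $\mathcal V$ allows non-trivial support one instead argues that a non-trivial support forces $\nu_i(f)$ to be eventually $\infty$, which makes $f$ not $\mathfrak v$-stable unless it is $\infty$ along a cofinal set, and handles that case separately).

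Next I would rule out the valuation-transcendental cases. The key structural fact is that the cited result already tells us $\mathfrak v$ has no largest element (this is the relevant setting — if it had a largest element $\nu_{i_0}$ then $\nu_{\mathfrak v} = \nu_{i_0}$ and nothing is gained), so by Lemma~\ref{lemMenorIgualMu}(2), $f$ being $\mathfrak v$-stable is equivalent to $\nu_k(f) = \nu_{\mathfrak v}(f)$ for some $k$, i.e. the value $\nu_{\mathfrak v}(f)$ is already attained at a finite stage. Hence $\Gamma_{\nu_{\mathfrak v}} = \bigcup_{i} \Gamma_{\nu_i}$, a union of subgroups of $\Lambda$ each containing $\Gamma_v$ with $\Gamma_{\nu_i}/\Gamma_v$ a small (cyclic-modulo-divisible-closure) extension. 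For value-transcendence I must show $\Gamma_{\nu_{\mathfrak v}}/\Gamma_v$ is torsion; the cleaner route is the one the authors point to via \cite{NartTree} (``Theorem 1.2 and Lemma 4.1''): a union of a chain of monomial-type augmented valuations that is genuinely increasing and stabilizes every polynomial has value group whose quotient over $\Gamma_v$ is torsion, because each new generator contributed at a finite stage, once it becomes stable, is ``captured'' and no single irrational-over-$\Gamma_v$ direction survives to the limit — the limit value group is the relative divisible closure picture. Similarly, for residue-transcendence one shows $k_{\nu_{\mathfrak v}}/k$ is algebraic: any element of $k_{\nu_{\mathfrak v}}$ is the residue of some $f/g$ with $\nu_{\mathfrak v}(f) = \nu_{\mathfrak v}(g)$, these values are attained at some finite stage $\nu_k$, and one transfers the question to $k_{\nu_k}/k$, which is known to be residue-transcendental of transcendence degree at most one — but the transcendental element does \emph{not} persist to the limit precisely because the family keeps strictly increasing past stage $k$, killing the old residue-transcendental generator.

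The main obstacle, and the step I would spend the most care on, is exactly this ``killing'' phenomenon: making precise why the strictly-increasing-with-no-largest-element hypothesis, combined with stability of \emph{all} polynomials, forces both the value-group quotient to be torsion and the residue extension to be algebraic. The natural way to organize it is: pick any $f$ realizing a putative transcendental residue or irrational value at some stage $\nu_{i_0}$; since $\mathfrak v$ has no largest element choose $i_1 > i_0$ with $\nu_{i_1} > \nu_{i_0}$; find a polynomial $g$ (necessarily of bounded degree, using the tree structure of $\mathcal V$ from Proposition~\ref{treescufutue} and the key-polynomial machinery behind \cite{NartTree}) witnessing $\nu_{i_1} > \nu_{i_0}$, so $\nu_{i_0}(g) < \nu_{i_1}(g) = \nu_{\mathfrak v}(g)$; then an Abhyankar-inequality count applied at stage $i_1$, or the dimension bookkeeping of \cite[Lemma 4.1]{NartTree}, shows the ``budget'' of one unit of either residue-transcendence or value-transcendence relative to $v$ has been consumed by $g$, leaving none for $f$ at the limit — contradiction with $f$ being transcendental-type at the limit. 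Rather than reprove this from scratch I would invoke \cite[Theorem 1.2 and Lemma 4.1]{NartTree} directly for the value-group and residue-field statements, since the hypotheses of Proposition~\ref{lemVstable} (increasing family in $\mathcal V$, no largest element, every polynomial $\mathfrak v$-stable) are precisely what is needed to place $\nu_{\mathfrak v}$ in the scope of that theorem, and then simply assemble: not-nontrivial-support, not-value-transcendental, not-residue-transcendental, therefore valuation-algebraic.
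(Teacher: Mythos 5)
Your proposal is essentially the paper's own approach: the paper gives no independent argument for this corollary, deriving it directly from \cite[Theorem 1.2 and Lemma 4.1]{NartTree}, which is exactly where your proof lands after the correct elementary observations (ruling out non-trivial support, and noting that the relevant setting is an increasing family without largest element, an hypothesis the paper leaves implicit). The intermediate heuristics about the value group and residue field are superfluous once that citation is invoked, so no further work is needed.
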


% If every $f\in K[x]$ is $\mathcal C$-stable, then $\nu_\mathcal C$ is a valuation on $K[x]$. We say that $\nu_\mathcal C$ is the stable limit of $\mathcal C$ and write
%\[
%\nu_{\mathcal C}=\sup_{i\in I}\nu_i.
%\]

%\begin{Teo}
%	$\nu_C$ is valuation. artigo dos limites
%\end{Teo}

Consider the set 
$$C(\mathfrak{v}):=\{ f\in K[x]\mid f \text{ is } \mathfrak{v}\text{-stable }\}.$$
For every $f\in C(\mathfrak{v})$ we set $\nu_\mathfrak{v}(f) = \nu_{i_f}(f)$.

\begin{Lema}\label{lemCofinalEachOther}
	Suppose that $\mathfrak{v}$ and $\mathfrak{v}'$ are two increasing families of valuations such that $ \mathfrak{v}$ and $ \mathfrak{v}'$ are cofinal in each other. Then,
	\[
	C(\mathfrak{v}) =C(\mathfrak{v}')\mbox{ and }\nu_\mathfrak{v}(f)=\nu_{\mathfrak{v}'}(f)\mbox{ for every }f\in C(\mathfrak{v}).
	\]
\end{Lema}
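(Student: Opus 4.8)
The statement is about two increasing families $\mathfrak{v} = \{\nu_i\}_{i \in I}$ and $\mathfrak{v}' = \{\nu'_j\}_{j \in J}$ that are mutually cofinal, and I want to show that their stable-polynomial sets coincide and that the two limit functionals agree on that common set. My plan is to work directly from the definition of $\mathfrak{v}$-stability (via Proposition~\ref{lemVstable} and the discussion preceding it), exploiting the key fact (a consequence of \cite[Corollary 2.3]{novbarnabe} quoted above) that for a polynomial $f$, either the values $\nu_i(f)$ are eventually constant or they strictly increase along the family. The cofinality hypothesis is exactly what is needed to transfer ``eventually constant'' from one family to the other.

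First I would fix $f \in C(\mathfrak{v})$, so there is $i_f \in I$ with $\nu_i(f) = \nu_{i_f}(f)$ for all $i \geq i_f$. Using that $\mathfrak{v}'$ is cofinal in $\mathfrak{v}$, pick $j_0 \in J$ with $\nu'_{j_0} \geq \nu_{i_f}$; then pick $i_1 \in I$ with $\nu_{i_1} \geq \nu'_{j_0}$, and then $j_1 \in J$ with $\nu'_{j_1} \geq \nu_{i_1}$. This interleaving gives $\nu_{i_f} \leq \nu'_{j_0} \leq \nu_{i_1} \leq \nu'_{j_1}$, hence $\nu_{i_f}(f) \leq \nu'_{j_0}(f) \leq \nu_{i_1}(f) \leq \nu'_{j_1}(f)$. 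But $\nu_{i_1}(f) = \nu_{i_f}(f)$ since $i_1 \geq i_f$, which forces $\nu'_{j_0}(f) = \nu'_{j_1}(f) = \nu_{i_f}(f)$. To conclude that $f$ is $\mathfrak{v}'$-stable I would take an arbitrary $j \geq j_0$ in $J$, note that by cofinality there is $i \in I$ with $\nu_i \geq \nu'_j$ and $i \geq i_f$ (the tree/total-order structure of the families lets me assume indices go up together, or I simply take a common upper bound), so $\nu_{i_f}(f) = \nu'_{j_0}(f) \leq \nu'_j(f) \leq \nu_i(f) = \nu_{i_f}(f)$, giving $\nu'_j(f) = \nu_{i_f}(f)$ for all $j \geq j_0$. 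Thus $f \in C(\mathfrak{v}')$ and moreover $\nu_{\mathfrak{v}'}(f) = \nu'_{j_0}(f) = \nu_{i_f}(f) = \nu_{\mathfrak{v}}(f)$. By symmetry $C(\mathfrak{v}') \subseteq C(\mathfrak{v})$ with the matching value, so $C(\mathfrak{v}) = C(\mathfrak{v}')$ and the two functionals agree on it.

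The only mildly delicate point is the bookkeeping with indices: I need, given $j \in J$, an index $i \in I$ that simultaneously dominates $\nu'_j$ and dominates $\nu_{i_f}$. Since $I$ is totally ordered (each family is totally ordered as a subset of the tree $\mathcal{V}$), once I have one index dominating $\nu'_j$, I can replace it by the larger of it and $i_f$; and $\nu_i \geq \nu'_j$ is preserved under passing to a larger index. So there is no real obstacle — the argument is a clean cofinality chase — and the main thing to be careful about is phrasing it so that it is manifestly symmetric in $\mathfrak{v}$ and $\mathfrak{v}'$, which makes the ``by symmetry'' step legitimate.
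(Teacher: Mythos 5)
Your argument is correct and is essentially the paper's own proof: fix a stabilization index $i_f$, use mutual cofinality to sandwich $\nu'_j(f)$ between values of the $\mathfrak{v}$-family equal to $\nu_{i_f}(f)$, and conclude by symmetry. The extra interleaving step with $j_1$ is harmless but unnecessary; the paper gets the same sandwich $\nu_{i_f}(f)\leq \mu_{j_f}(f)\leq\mu_j(f)\leq\nu_i(f)=\nu_{i_f}(f)$ directly.
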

\begin{proof}
	Write $\mathfrak{v}=\{\nu_i\}_{i\in I}$ and $\mathfrak{v}'=\{\mu_j\}_{j\in J}$ and take $f\in C(\mathfrak{v})$. Then, there exists $i_f$ such that $\nu_i(f)=\nu_{i_f}(f)$ for every $i\geq i_f$. By the cofinality of $\mathfrak v'$ in $\mathfrak v$, there exists $j_f\in J$ such that $\nu_{i_f}\leq \mu_{j_f}$. For any $j\geq j_f$, take $i\in I$ large enough such that  $\nu_i\geq \mu_{j}$ (in particular, $i\geq i_f$), which exists by the cofinality of $\mathfrak v$ in $\mathfrak v'$. Then
	\[
		\nu_i(f)= \nu_{i_f}(f)\leq \mu_{j_f}(f)\leq \mu_j(f)\leq \nu_i(f).
		\]
Then equality holds everywhere on the above inequalities. Consequently, $f\in C(\mathfrak{v}')$ and $\nu_{\mathfrak v}(f)=\nu_{\mathfrak v'}(f)$.
The other inclusion follows by the symmetric argument.
\end{proof}

Let $\mathfrak{v}= \{\nu_i\}_{i\in I}$ be a totally ordered set in $\mathcal{V}$ such that $\mathfrak{v}$ has no maximum and admits unstable polynomials. 
%We assume that $i<j$ implies $\nu_i<\nu_j$. 
Let $Q$ be a monic $\mathfrak{v}$-unstable polynomial of smallest  degree among $\mathfrak{v}$-unstable polynomials and take $\gamma\in \Lambda_\infty$ such that $\gamma>\nu_i(Q)$ for every $i\in I$. 

Consider the map 
$$\mu_{Q,\gamma}(f)=\underset{0\leq j \leq r}{\min}\{ \nu_{\mathfrak{v}}(f_j)+j\gamma  \}, $$
where $f_0+f_1Q+\ldots+f_rQ^r$ is the $Q$-expansion of $f$. 

\begin{Prop}\label{propMuQGamma}\cite[Theorem 2.4]{novbarnabe}, \cite[Proposition 1.21]{Vaq}, \cite[Theorem 5.1]{MacLane}
	The map $\mu_{Q,\gamma}$ belongs to $\mathcal{V}$ and  $\nu_i<\mu$ for every $i\in I$. 
\end{Prop}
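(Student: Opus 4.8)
The plan is to recognize $\mu_{Q,\gamma}$ as an instance of the construction of Section~\ref{constructuon} and to deduce that it is a valuation from Theorem~\ref{lemasobreooutrocoiso}, and then to prove the comparison $\nu_i<\mu_{Q,\gamma}$ separately, by induction on degrees. Write $d=\deg Q$ and take $S:=C(\mathfrak{v})$, the set of $\mathfrak{v}$-stable polynomials, with the map $\nu_{\mathfrak{v}}\colon S\to\Lambda_\infty$, $\nu_{\mathfrak{v}}(f)=\nu_{i_f}(f)$. Since $Q$ has smallest degree among $\mathfrak{v}$-unstable polynomials, every polynomial of degree $<d$ is $\mathfrak{v}$-stable, so $K[x]_d\subseteq S$; moreover $S$ is closed under multiplication (for $f,g\in S$ and $i$ past both stability indices, $\nu_i(fg)=\nu_i(f)+\nu_i(g)$ is constant), and $\nu_{\mathfrak{v}}|_S$ satisfies \textbf{(V2')} and hypothesis \textbf{(i)} of Theorem~\ref{lemasobreooutrocoiso} (both follow by evaluating the $\nu_i$ for $i$ large). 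The only extra input I will need is that $\{\nu_i(Q)\}_{i\in I}$ is strictly increasing: this holds because $Q$ is $\mathfrak{v}$-unstable, by the dichotomy of \cite[Corollary 2.3]{novbarnabe}.

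The heart of the argument is hypothesis \textbf{(ii)} of Theorem~\ref{lemasobreooutrocoiso}. Suppose $\overline f\,\overline g=aQ+c$ with $\overline f,\overline g,c\in K[x]_d$ (and hence $a\in K[x]_d$). All of $\overline f\,\overline g$, $a$, $c$ lie in $S$, so for $i$ large $\nu_i(\overline f\,\overline g)$, $\nu_i(a)$, $\nu_i(c)$ are constant, while $\nu_i(aQ)=\nu_i(a)+\nu_i(Q)$ is eventually strictly increasing. From $c=\overline f\,\overline g-aQ$ it is impossible that $\nu_i(aQ)\leq\nu_i(\overline f\,\overline g)$ for all large $i$: otherwise for $i$ even larger $\nu_i(aQ)<\nu_i(\overline f\,\overline g)$, whence $\nu_i(c)=\nu_i(aQ)$ would be strictly increasing, contradicting $c\in S$. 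So $\nu_i(aQ)>\nu_i(\overline f\,\overline g)$ for $i$ large, which forces $\nu_i(c)=\nu_i(\overline f\,\overline g)$, i.e. $\nu_{\mathfrak{v}}(c)=\nu_{\mathfrak{v}}(\overline f\,\overline g)$, and then, using $\gamma>\nu_i(Q)$, $\nu_{\mathfrak{v}}(a)+\gamma=\nu_i(a)+\gamma>\nu_i(a)+\nu_i(Q)=\nu_i(aQ)>\nu_i(\overline f\,\overline g)=\nu_{\mathfrak{v}}(\overline f\,\overline g)=\nu_{\mathfrak{v}}(c)$, which is \textbf{(ii)}. Theorem~\ref{lemasobreooutrocoiso} then gives that $\mu_{Q,\gamma}$ satisfies \textbf{(V1')} and \textbf{(V2')} on all of $K[x]$, hence \textbf{(V1)} and \textbf{(V2)}; \textbf{(V3)} is immediate, $\mu_{Q,\gamma}|_K=v$, and $\mu_{Q,\gamma}$ takes values in $\Lambda_\infty$, so $\mu_{Q,\gamma}\in\mathcal{V}$.

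For the comparison, strictness is clear since the $Q$-expansion of $Q$ gives $\mu_{Q,\gamma}(Q)=\gamma>\nu_i(Q)$. To get $\nu_i\leq\mu_{Q,\gamma}$ I will induct on $\deg f$: for $\deg f<d$ one has $\nu_i(f)\leq\nu_{\mathfrak{v}}(f)=\mu_{Q,\gamma}(f)$; for $\deg f\geq d$, write $f=qQ+f_0$ with $\deg f_0<d$ and $\deg q<\deg f$, so that by definition $\mu_{Q,\gamma}(f)=\min\{\nu_{\mathfrak{v}}(f_0),\ \mu_{Q,\gamma}(q)+\gamma\}$ and, by the induction hypothesis, $\nu_j(q)\leq\mu_{Q,\gamma}(q)$ for every $j\in I$. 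Now $\nu_j(f_0)=\nu_{\mathfrak{v}}(f_0)$ for $j$ large while $\nu_j(qQ)=\nu_j(q)+\nu_j(Q)$ is eventually strictly increasing, so one of two things happens. Either $\nu_j(qQ)<\nu_{\mathfrak{v}}(f_0)$ for all large $j$, in which case $\nu_j(f)=\nu_j(qQ)=\nu_j(q)+\nu_j(Q)$ for $j$ large, which is $<\nu_{\mathfrak{v}}(f_0)$ and also $\leq\mu_{Q,\gamma}(q)+\nu_j(Q)<\mu_{Q,\gamma}(q)+\gamma$, so $\nu_i(f)\leq\nu_j(f)<\mu_{Q,\gamma}(f)$ for $j\geq i$ large; or $\nu_{j_0}(qQ)\geq\nu_{\mathfrak{v}}(f_0)$ for some large $j_0$, in which case $\nu_j(f)=\nu_j(f_0)=\nu_{\mathfrak{v}}(f_0)$ for $j\geq j_0$, so $f$ is $\mathfrak{v}$-stable with $\nu_{\mathfrak{v}}(f)=\nu_{\mathfrak{v}}(f_0)$, and $\nu_i(f)\leq\nu_{\mathfrak{v}}(f_0)\leq\mu_{Q,\gamma}(f)$ because $\nu_{\mathfrak{v}}(f_0)\leq\nu_{j_0}(q)+\nu_{j_0}(Q)<\mu_{Q,\gamma}(q)+\gamma$. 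In both cases $\nu_i(f)\leq\mu_{Q,\gamma}(f)$, which closes the induction.

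I expect the genuine obstacles to be concentrated in hypothesis \textbf{(ii)} and in the inductive step just sketched; both come down to the same tension, namely that $\mathfrak{v}$-unstable polynomials have strictly increasing values bounded above by $\gamma$, whereas $\mathfrak{v}$-stable polynomials have eventually constant values, so once the dichotomy of \cite[Corollary 2.3]{novbarnabe} is available the arguments are essentially forced. The minimality of $\deg Q$ enters only to ensure $K[x]_d\subseteq C(\mathfrak{v})$, and the fussiest part is the bookkeeping in the $\leq$ direction — tracking which term of the $Q$-expansion of $f$ realizes $\nu_j(f)$ as $j$ grows — which is routine but must be done with care.
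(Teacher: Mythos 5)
Your argument is sound, but it is worth saying up front that the paper offers no proof of Proposition~\ref{propMuQGamma} at all: it is imported wholesale from \cite{novbarnabe}, \cite{Vaq} and \cite{MacLane}. So what you have produced is a self-contained reconstruction using the paper's own Section~\ref{constructuon} machinery, which is a genuinely different (and arguably more satisfying, in context) route: you feed $S=C(\mathfrak{v})$ and $\nu_{\mathfrak{v}}$ into Theorem~\ref{lemasobreooutrocoiso}, with the minimality of $\deg Q$ guaranteeing $K[x]_{\deg Q}\subseteq C(\mathfrak{v})$, and you verify hypothesis \textbf{(ii)} exactly where the tension lies, namely that $\nu_i(aQ)=\nu_{\mathfrak{v}}(a)+\nu_i(Q)$ is strictly increasing while $\nu_i(c)$ and $\nu_i(\overline f\,\overline g)$ are eventually constant; your degree induction for $\nu_i\leq\mu_{Q,\gamma}$, splitting on whether $\nu_j(qQ)$ stays below or overtakes $\nu_{\mathfrak{v}}(f_0)$, is the correct mechanism (the naive triangle-inequality estimate only gives the wrong-way bound), and strictness via $\mu_{Q,\gamma}(Q)=\gamma>\nu_i(Q)$ is right. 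What the citation buys the paper is brevity; what your route buys is that the proposition follows from results already quoted in the paper (\cite[Theorem 1.1]{josneimonomial} together with \cite[Corollary 2.3]{novbarnabe}) rather than from a further external theorem.

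Two small points to tidy. First, several of your strict inequalities (e.g.\ $\nu_j(f)<\mu_{Q,\gamma}(f)$ in the first branch, or $\nu_{j_0}(q)+\nu_{j_0}(Q)<\mu_{Q,\gamma}(q)+\gamma$) can degenerate if infinite values occur (recall $\gamma=\infty$ is allowed, and a priori some $\nu_i$ could have nontrivial support); either observe that the hypotheses (no maximum and existence of $\mathfrak v$-unstable polynomials) force every $\nu_i$ to have trivial support, so nonzero polynomials have finite $\nu_i$-values, or weaken those strict signs to $\leq$, which is all your argument needs. Second, membership in $\mathcal V$ literally also requires the values of $\mu_{Q,\gamma}$ to lie in $\Lambda_\infty=(\R_{\rm sme})_\infty$, which you assert rather than check; since $\R_{\rm sme}$ is not closed under addition this is not a complete triviality, though the paper (and its citations) are equally silent on it, so it is not a defect specific to your proof.
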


Consider the cut $\delta$ whose left cut set is
 $$\{\alpha\in \Gamma_{\Q }\mid \alpha \leq \nu_i(Q) \text{ for some } i \in I  \} .$$
Consider $\gamma= x_{\delta}$.

\begin{Prop}\label{lemSupAlgType}
The cut $\delta$ and the valuation $\mu_{Q,\gamma}$ are independent of the choice of the polynomial $Q$, among the monic $\mathfrak v$-unstable polynomials of smallest degree. Moreover,
\[
\mu_{Q,\gamma}=\sup_{i\in I}\nu_i.
\]
\end{Prop}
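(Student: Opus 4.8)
The plan is to split the statement into two independent claims: (A) the cut $\delta$ and hence the valuation $\mu_{Q,\gamma}$ do not depend on the choice of $Q$ among monic $\mathfrak v$-unstable polynomials of minimal degree, and (B) $\mu_{Q,\gamma}=\sup_{i\in I}\nu_i$. For (A), fix two monic $\mathfrak v$-unstable polynomials $Q$ and $Q'$ of the same (minimal) degree $d$. The difference $Q-Q'$ has degree $<d$, so it is $\mathfrak v$-stable and hence lies in $C(\mathfrak v)$; moreover $\nu_i(Q-Q')=\nu_{\mathfrak v}(Q-Q')$ is eventually constant. The key point is that $\nu_i(Q-Q')$ cannot keep pace with the strictly increasing $\nu_i(Q)$ (and $\nu_i(Q')$): since $Q$ is unstable, $\nu_i(Q)$ is strictly increasing in $i$ by Proposition above and Lemma~\ref{lemMenorIgualMu}, so for $i$ large one has $\nu_i(Q)>\nu_{\mathfrak v}(Q-Q')$, which forces $\nu_i(Q')=\nu_i(Q-Q'+Q')=\min\{\nu_i(Q-Q'),\nu_i(Q')\}$... more carefully, $\nu_i(Q')\ge\min\{\nu_i(Q),\nu_i(Q-Q')\}$ and symmetrically, and once $\nu_i(Q)>\nu_i(Q-Q')$ this yields $\nu_i(Q')=\nu_i(Q-Q')$, contradicting that $Q'$ is unstable. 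Hence in fact $\nu_i(Q-Q')$ is dominated by $\nu_i(Q)$ for large $i$, giving $\nu_i(Q')=\nu_i(Q)$ for all large $i$; therefore the two families $\{\nu_i(Q)\}$ and $\{\nu_i(Q')\}$ are eventually equal, so they define the same left cut set and the same cut $\delta$. Since $\mu_{Q,\gamma}$ is built from $\nu_{\mathfrak v}$ on polynomials of degree $<d$ (which is choice-independent) together with $\gamma=x_\delta$ (now known to be choice-independent) and the $Q$-expansion, one checks $\mu_{Q,\gamma}=\mu_{Q',\gamma}$ by comparing $Q$- and $Q'$-expansions degree by degree, using that $Q-Q'\in K[x]_d$ and $\mu_{Q,\gamma}$ restricted to $K[x]_d$ equals $\nu_{\mathfrak v}$.

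For (B), by Proposition~\ref{propMuQGamma} we already know $\mu_{Q,\gamma}\in\mathcal V$ and $\nu_i<\mu_{Q,\gamma}$ for all $i$, so $\mu_{Q,\gamma}$ is an upper bound of $\mathfrak v$ in $\mathcal V$. It remains to show it is the \emph{least} upper bound: given any $\mu'\in\mathcal V$ with $\nu_i\le\mu'$ for all $i$, I must show $\mu_{Q,\gamma}\le\mu'$, i.e. $\mu_{Q,\gamma}(f)\le\mu'(f)$ for every $f\in K[x]$. Proceed by induction on $\deg f$. If $\deg f<d$ then $f$ is $\mathfrak v$-stable, so $\mu_{Q,\gamma}(f)=\nu_{\mathfrak v}(f)=\nu_{i_f}(f)\le\mu'(f)$ directly. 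For general $f$, take the $Q$-expansion $f=\sum_j f_jQ^j$; by the ultrametric inequality $\mu'(f)\ge\min_j\{\mu'(f_j)+j\mu'(Q)\}$, and by induction each $\mu'(f_j)\ge\nu_{\mathfrak v}(f_j)$ since $\deg f_j<d$. The crucial remaining estimate is $\mu'(Q)\ge\gamma=x_\delta$: because $\mu'\ge\nu_i$ gives $\mu'(Q)\ge\nu_i(Q)$ for every $i$, the value $\mu'(Q)$ is an upper bound for the set $\{\nu_i(Q)\}_{i\in I}$, hence (since $x_\delta$ is by construction the supremum of $\delta^L\supseteq\{\nu_i(Q)\}$, and every element strictly above all $\nu_i(Q)$ lies in $\delta^R$ so is $\ge x_\delta$) we get $\mu'(Q)\ge x_\delta=\gamma$. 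One must handle the possibility $\mu'(Q)\in\delta^L$: that would mean $\mu'(Q)\le\nu_i(Q)$ for some $i$, impossible since $\mu'\ge\nu_i$ and $\nu_i(Q)$ is strictly increasing; so indeed $\mu'(Q)\ge x_\delta$. Combining, $\mu'(f)\ge\min_j\{\nu_{\mathfrak v}(f_j)+j\gamma\}=\mu_{Q,\gamma}(f)$, which completes the induction and shows $\mu_{Q,\gamma}\le\mu'$.

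I expect the main obstacle to be the choice-independence of the cut $\delta$ in part (A): one has to argue cleanly that an unstable polynomial of minimal degree has strictly increasing values along the family and that this strict growth is not shared by lower-degree (stable) polynomials, so that $\nu_i(Q)$ and $\nu_i(Q')$ agree eventually — the subtlety is bookkeeping the ultrametric inequalities in both directions and invoking instability at the right moment. The argument in (B) is more routine once one correctly identifies $\mu'(Q)\ge x_\delta$ via the defining property of the cut, but care is needed to ensure $\mu'(Q)\notin\delta^L$ so that the comparison with $x_\delta$ goes through; this again uses the strict monotonicity of $\{\nu_i(Q)\}$. Throughout, Lemma~\ref{lemCofinalEachOther} and Lemma~\ref{lemMenorIgualMu} supply the needed facts about stable polynomials and cofinal families.
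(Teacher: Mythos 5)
Your proposal is correct and follows essentially the same route as the paper: $Q'-Q$ is $\mathfrak v$-stable of lower degree, so instability of $Q'$ forces $\nu_i(Q)=\nu_i(Q')$ for large $i$ (hence the same cut $\delta$), and the supremum property is verified via the $Q$-expansion, stability of polynomials of degree less than $\deg Q$, and the estimate $\mu'(Q)\geq\gamma$. The only difference is that the paper obtains $\mu_{Q,\gamma}=\mu_{Q',\gamma}$ immediately from uniqueness of the supremum, whereas you sketch a (correct but unnecessary) degree-by-degree comparison of $Q$- and $Q'$-expansions before proving the supremum statement.
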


\begin{proof}
Suppose  that $Q'$ is another polynomial of minimal degree not $\mathfrak{v}$-stable and write $Q'=Q+h$. Since $\deg(Q)=\deg(Q')$ and both are monic, we have $\deg(h)<\deg(Q)$, so that $h\in C(\mathfrak v)$. Since $\{\nu_i(h)\}_{i\in I}$ is ultimately constant and $\{\nu_i(Q)\}_{i\in I}$ and $\{\nu_i(Q')\}_{i\in I}$ are increasing, we deduce that $\nu_i(Q)=\nu_i(Q')$ for sufficiently large $i\in I$. This implies that $\gamma$ does not depend on the choice of $Q$ among the monic $\mathfrak v$-unstable polynomials of smallest degree. The fact that $\mu_{Q,\gamma}=\mu_{Q',\gamma}$ will follow immediately from the fact that $\mu_{Q,\gamma}=\sup_{i\in I}\nu_i$.

By Proposition \ref{propMuQGamma}, in order to show that $\mu_{Q,\gamma}=\sup_{i\in I}\nu_i$ we have to show that if $\mu\in \mathcal{V}$ is such that $\mu\geq   \nu_i$ for every $i\in I$, then $\mu\geq \mu_{Q,\gamma}$. If $f\in K[x]$ is such that $\deg(f)<\deg(Q)$, then $f$ is  $\mathfrak{v}$-stable and for some $i\in I$ we have
\[
\nu_{Q,\gamma}(f)=\nu_i(f)\leq \mu(f).
\]
Also, since $\mu(Q)\geq \nu_{i}(Q)$ for every $i\in I$, we have
	$$\mu(Q)\geq \gamma = \mu_{Q,\gamma}(Q).  $$
For any $f\in K[x]$ let
\[
f=f_0+f_1Q+\ldots+f_rQ^r
\]
be the $Q$-expansion of $f$. Then
$$\mu(f)\geq\min_{0\leq i\leq r}\{\mu(f_i)+i\mu(Q)\}  =  \min_{0\leq i\leq r}\{\nu_{Q,\gamma}(f_i)+i\gamma\} = \mu_{Q,\gamma}(f).$$
Hence, $\mu_{Q,\gamma}=\sup_{i\in I}\nu_i. $
\end{proof}

\section{Parametrization of $\mathbb{V}$ via  pseudo-Cauchy sequences }\label{PseudoCauchySeq}

We begin this section with some definitions and results concerning pseudo-Cauchy sequences.

\begin{Def}
	Let $(K,v)$ be a valued field and  $\underline{a}=\{a_{i}\}_{i\in I}$ a subset of $K$, where $I$ is a well-ordered set with at least two elements. We say that  $\underline a$ is a \textbf{pseudo-Cauchy sequence} if
		\[
		v(a_i-a_j)<v(a_j-a_k)\mbox{ for every }i,j,k\in I\mbox{ with }i<j<k.
		\]
A subset $\underline{a}=\{a_{i}\}_{i\in I}$ is said to be \textbf{ultimately} a pseudo-Cauchy sequence if there exists $i_0\in I$ such that $\{a_i\}_{i\geq i_0}$ is a pseudo-Cauchy sequence.
\end{Def}
\begin{Obs}
If the sequence contains only two elements, then it is trivially a pseudo-Cauchy sequence. 
\end{Obs}
\begin{Obs}
The definition above is based on the one given by Kaplansky in \cite{Kapl}. There the notation $\rho<\lambda$ is used (instead of $i\in I$) where $\lambda$ is an infinite ordinal. We use the notation $i\in I$ because we think it is simpler and because we do not assume that $I$ does not have last element. The reason for our assumption is to be able to parametrize a larger class of valuations.
\end{Obs}

\begin{Obs}
In \cite{Per}, the authors work with a more general type of sequences, called \textit{pseudo-monotone}. Although they allow us to parametrize a large class of valuations, we will not use them here.
\end{Obs}

We denote by $\mathbb{S}$ the set of pseudo-Cauchy sequences in $K$. For a pseudo-Cauchy sequence $\underline{a}=\{a_i\}_{i\in I}$, if $I$ has a maximum, we denote it by $i_{\rm max}$. In this case, we denote $I^*= I\setminus\{i_{\rm max}\}$. Otherwise, we set $I^*=I$.

For each $i\in I^*$ we set
\[
\gamma_i:=v(a_{i+1}-a_i)
\]
where $i+1$ denotes the successor of $i$. For each $j\in I$, $i<j$, we have
\[
v(a_j-a_i)=\min\{v(a_j-a_{i+1}),v(a_{i+1}-a_i)\}=v(a_{i+1}-a_i)=\gamma_i.
\]
Hence, $\{\gamma_i\}_{i\in I^*}$ is an increasing sequence in $\Gamma$.

\begin{Lema}\cite[Lemma 1]{Kapl}\label{Lemmabehpseudo}
	If $\{a_i\}_{i\in I}$ is a pseudo-Cauchy sequence, then either
	\begin{equation}\label{eqnsobrepsconvseq}
		v(a_i)<v(a_j)\mbox{ for all }i,j\in I,i<j, \mbox{ (increasing)}
	\end{equation}
	or there exists $i_0\in I$ such that
	\begin{equation}\label{eqnsobrepsconvseq1}
		v(a_j)=v(a_{i_0})\mbox{ for every }j\in I, i_0\leq j \mbox{ (ultimately constant)}.
	\end{equation}
\end{Lema}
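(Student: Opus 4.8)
The statement is the classical dichotomy that along a pseudo-Cauchy sequence the values $v(a_i)$ are either strictly increasing or eventually constant. The plan is to reduce the claim to the behaviour of the differences $v(a_{i+1}-a_i)=\gamma_i$, which we already know form an increasing sequence in $\Gamma$, and then to split into two cases according to whether the $\gamma_i$ ever catch up to $v(a_i)$ from below. First I would fix notation: for $i<j$ we have already recorded that $v(a_j-a_i)=\gamma_i$, and in particular $v(a_{i+1}-a_i)=\gamma_i$ with $\{\gamma_i\}_{i\in I^*}$ increasing. The basic tool throughout is the ultrametric identity: if $v(\alpha)\neq v(\beta)$, then $v(\alpha+\beta)=\min\{v(\alpha),v(\beta)\}$.

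The first case I would treat is when $v(a_i)<\gamma_i$ for \emph{every} $i\in I^*$. Writing $a_j=a_i+(a_j-a_i)$ for $i<j$ and using $v(a_j-a_i)=\gamma_i>v(a_i)$ together with the ultrametric identity gives $v(a_j)=v(a_i)$; that is, the sequence of values is constant (so in particular ultimately constant, taking $i_0$ to be any element of $I$, or the minimum of $I$). Note this case subsumes the possibility that some $v(a_i)=-\infty$-type degeneracies do not arise since we are in a valued field, so $v(a_i)\in\Gamma$ for $a_i\neq 0$; if some $a_i=0$ one argues directly that then $v(a_j)=\gamma_i$ is forced for $j>i$ and one lands in the ultimately-constant case. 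The second case is when there exists $i_0\in I^*$ with $v(a_{i_0})\geq \gamma_{i_0}$. Here I would show two things: (1) for every $i\geq i_0$ (with $i\in I^*$) we still have $v(a_i)\geq\gamma_i$; and (2) for $i_0\le i<j$ we get $v(a_j)=\gamma_i$, hence $\{v(a_j)\}_{j\ge i_0}$ is strictly increasing since the $\gamma_i$ are. For (2): if $v(a_i)>\gamma_i$ then $v(a_j)=\min\{v(a_i),\gamma_i\}=\gamma_i$ directly; if $v(a_i)=\gamma_i$ one needs a slightly finer argument — write $a_j = a_i + (a_j - a_{i+1}) + (a_{i+1}-a_i)$, note $v(a_{i+1}-a_i)=\gamma_i=v(a_i)$ while $v(a_j-a_{i+1})=\gamma_{i+1}>\gamma_i$, and so $v(a_j)\ge\gamma_i$ with the comparison refined by passing to the successor; more robustly, use that $v(a_{i+1})=v(a_i+(a_{i+1}-a_i))$ and since both summands have value $\gamma_i$ the value of $a_{i+1}$ is $\ge\gamma_i$, i.e. $v(a_{i+1})\ge\gamma_i$, and then for $j>i+1$ one has $v(a_j-a_{i+1})=\gamma_{i+1}>\gamma_i$ giving either $v(a_j)=v(a_{i+1})$ or the two are incomparable-in-value forcing $v(a_j)=\gamma_{i+1}$; in all sub-cases $v(a_j)>\gamma_i\ge v(a_i)$-type monotonicity is extracted. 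Statement (1) follows inductively from the same computation, since $v(a_{i+1})\ge\min\{v(a_i),\gamma_i\}=\gamma_i\ge\gamma_{i+1}$ is false in general — so instead one shows $v(a_j)=\gamma_i$ for the immediate purpose and checks the ``$\geq\gamma$ at the next index'' bookkeeping carefully.

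Finally I would assemble the dichotomy: either $v(a_i)<\gamma_i$ for all $i\in I^*$ (case 1, ultimately constant — indeed constant), or there is a first-or-some index $i_0$ with $v(a_{i_0})\ge\gamma_{i_0}$ (case 2, strictly increasing from $i_0$ onward, hence ``increasing'' in the sense of \eqref{eqnsobrepsconvseq} after replacing $I$ by its final segment from $i_0$; if $i_0$ can be taken to be the minimum of $I$ then \eqref{eqnsobrepsconvseq} holds as stated).

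\textbf{Main obstacle.} The delicate point is the borderline sub-case $v(a_i)=\gamma_i$: there the naive ultrametric inequality $v(a_j)\ge\min\{v(a_i),\gamma_i\}$ only yields $v(a_j)\ge\gamma_i$ without separating strict increase from constancy. The fix is to avoid comparing $a_j$ with $a_i$ directly and instead compare with $a_{i+1}$ (or with $a_k$ for $k$ just above $i$), exploiting that $v(a_j-a_k)=\gamma_k>\gamma_i$ strictly — this strict gap is exactly what the pseudo-Cauchy condition provides and is what converts ``$\ge$'' into the clean trichotomy needed. One must be a little careful that, once we land in case 2 at index $i_0$, the property ``$v(a_i)\ge\gamma_i$'' genuinely persists for all larger $i$; this is where I would spend the most care, tracking the value $v(a_j)$ exactly (equal to $\gamma_i$ for suitable $i<j$) rather than just bounding it.
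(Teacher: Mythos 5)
The paper offers no proof of this lemma (it is quoted from Kaplansky), so your attempt can only be measured against the standard argument, and there it contains a genuine error: the conclusion you attach to your Case 2 is false. You claim that the existence of a single $i_0\in I^*$ with $v(a_{i_0})\ge\gamma_{i_0}$ forces $\{v(a_j)\}_{j\ge i_0}$ to be strictly increasing, via (1) $v(a_i)\ge\gamma_i$ for all $i\ge i_0$ and (2) $v(a_j)=\gamma_i$ for all $i_0\le i<j$. Claim (2) is already self-contradictory (for $i<i'<j$ it would give $\gamma_i=v(a_j)=\gamma_{i'}$ although $\gamma_i<\gamma_{i'}$), and both claims, together with the conclusion of Case 2, fail in examples: take $K=\Q_p$ with $p$ odd and $a_i=1+p^i$ for $i\in\N$. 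This is a pseudo-Cauchy sequence with $\gamma_i=i$ and $v(a_0)=0=\gamma_0$, so your Case 2 applies with $i_0=0$; yet $v(a_j)=0$ for every $j$, so the values are constant, and $v(a_1)=0<1=\gamma_1$ refutes (1). The same confusion occurs in your subcase $v(a_i)>\gamma_i$: there one gets $v(a_j)=\gamma_i$ for every $j>i$ with $i$ \emph{fixed}, i.e.\ a constant value, which you misread as increase ``since the $\gamma_i$ are increasing''. Your ``more robust'' treatment of the borderline subcase ends with the assertion that in all sub-cases $v(a_j)>\gamma_i$, which the example above contradicts, since there $v(a_j)=\gamma_0$ for all $j$.

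The quantity that actually governs the dichotomy is not the existence of one index with $v(a_{i_0})\ge\gamma_{i_0}$, but whether $v(a_i)=\gamma_i$ holds for \emph{all} $i\in I^*$. A correct short argument runs as follows. If $I$ has a last element, alternative \eqref{eqnsobrepsconvseq1} holds trivially with $i_0=i_{\max}$, so assume $I^*=I$. If $v(a_i)\neq\gamma_i$ for some $i$, then for every $j>i$ the ultrametric identity gives $v(a_j)=v\bigl(a_i+(a_j-a_i)\bigr)=\min\{v(a_i),\gamma_i\}$, a value independent of $j$, so the values are ultimately constant. If instead $v(a_i)=\gamma_i$ for every $i$, then for $i<j$ one has $v(a_i)=\gamma_i<\gamma_j=v(a_j)$, which is exactly alternative \eqref{eqnsobrepsconvseq} (for all $i<j$, not merely on a final segment, so no replacement of $I$ by a tail is needed). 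Your Case 1 ($v(a_i)<\gamma_i$ for every $i$) is correct but captures only a fragment of the ultimately-constant branch; the remainder of your analysis needs to be reorganized along the line just described.
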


The next lemma is a very useful tool to deal with pseudo-Cauchy sequences.

\begin{Lema}\cite[Lemma 4]{Kapl}\label{lemasuperutil}
	Let $\Gamma$ be an ordered abelian group, $\beta_1,\ldots,\beta_n\in \Gamma$ and $\{\gamma_i\}_{i\in I}$ an increasing sequence in $\Gamma$, without a last element. If $t_1,\ldots,t_n$ are distinct positive integers, then there exist $h$, $1\leq h\leq n$, and $i_0\in I$, such that
	\[
	\beta_l+t_l\gamma_i>\beta_h+t_h\gamma_i\mbox{ for every }l, 1\leq l\leq n, l\neq h\mbox{ and }i\in I, i>i_0.
	\]
\end{Lema}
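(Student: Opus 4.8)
The statement to prove is Lemma~\ref{lemasuperutil}: given finitely many group elements $\beta_1,\dots,\beta_n$, an increasing sequence $\{\gamma_i\}_{i\in I}$ with no last element, and distinct positive integers $t_1,\dots,t_n$, one of the linear forms $\beta_l + t_l\gamma_i$ is eventually strictly smallest. The plan is to argue by induction on $n$. The case $n=1$ is vacuous. For the inductive step, it suffices to compare the two forms with the \emph{largest} values of $t_l$ and show that one of them eventually dominates (is $\geq$) the other, so that we may discard it and invoke the induction hypothesis on the remaining $n-1$ forms; some care is needed because ``eventually dominates'' must be made strict, which is why I would instead directly compare all pairs.

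The cleanest route is a pairwise argument. For each pair of indices $l\neq m$, consider the difference $(\beta_l + t_l\gamma_i) - (\beta_m + t_m\gamma_i) = (\beta_l-\beta_m) + (t_l-t_m)\gamma_i$. Since $t_l\neq t_m$, the coefficient $t_l - t_m$ is a nonzero integer. First I would establish the key monotonicity fact: if $t_l > t_m$, then the sequence $(t_l-t_m)\gamma_i$ is strictly increasing (as $\gamma_i$ is strictly increasing and $t_l-t_m$ is a positive integer, using that in an ordered abelian group multiplication by a positive integer preserves strict inequalities), hence $(\beta_l-\beta_m)+(t_l-t_m)\gamma_i$ is strictly increasing. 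Now apply Proposition~\ref{lemVstable}-style reasoning — actually the relevant dichotomy is simpler: a strictly increasing sequence in an ordered abelian group either is eventually positive or is always $\leq 0$; but here, because there is no last element, if it takes a positive value at some index it stays positive thereafter, and if it is bounded above by $0$ we can still conclude... The honest statement I need is: for the pair $(l,m)$ with $t_l > t_m$, \emph{either} there is $i_0$ with $\beta_l + t_l\gamma_i > \beta_m + t_m\gamma_i$ for all $i > i_0$, \emph{or} $\beta_l + t_l\gamma_i \leq \beta_m + t_m\gamma_i$ for all $i$ — but in fact in the latter case, since the difference is strictly increasing and bounded above by $0$, this cannot persist along an infinite increasing sequence unless... no: a strictly increasing sequence \emph{can} be bounded above. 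So the correct dichotomy is: either $\beta_m + t_m\gamma_i$ is eventually $\leq \beta_l + t_l\gamma_i$ (possibly with equality at one index, but strictly after), or $\beta_l + t_l\gamma_i < \beta_m + t_m\gamma_i$ for all $i$.

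Given this, here is how I would organize the induction. Relabel so that $t_1 < t_2 < \dots < t_n$. Compare the forms indexed $n-1$ and $n$: since $t_n > t_{n-1}$, by the dichotomy above, either (i) $\beta_n + t_n\gamma_i$ is eventually $\geq \beta_{n-1}+t_{n-1}\gamma_i$, in which case form $n$ is never eventually-strictly-smallest and we may delete it, reducing to $n-1$ forms and applying the induction hypothesis (being careful that the $i_0$ from the induction hypothesis can be enlarged to also dominate $i_0$ from this step); or (ii) $\beta_n + t_n\gamma_i < \beta_{n-1}+t_{n-1}\gamma_i$ for \emph{all} $i$, in which case form $n-1$ is never the strict minimum and we delete it instead, again reducing to $n-1$ forms (the set $\{1,\dots,n-2,n\}$) and applying induction. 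In either case the resulting minimizer index $h$, valid past some $i_1$, together with the discarded comparison past some $i_0$, gives the conclusion for $i > \max\{i_0,i_1\}$.

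\textbf{Main obstacle.} The genuinely delicate point is handling equality and the boundary between strict and non-strict inequality along an infinite increasing sequence in an arbitrary ordered abelian group: one must verify that a strictly increasing sequence of differences cannot oscillate, and carefully track which of the two forms to discard so that the surviving one is still the \emph{strict} eventual minimum. Once the pairwise dichotomy is cleanly stated and proved (using only that positive-integer multiplication is strictly order-preserving and that $I$ has no last element), the induction is routine bookkeeping of finitely many thresholds $i_0$, which can all be dominated by their maximum since $I$ is totally ordered.
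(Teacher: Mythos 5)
Your argument is correct. The paper itself gives no proof of this lemma --- it is quoted from Kaplansky's paper --- so there is nothing to clash with, and your pairwise-comparison proof is the natural elementary one (and essentially Kaplansky's). The key points you identify are exactly the right ones: for $t_l>t_m$ the difference $(\beta_l-\beta_m)+(t_l-t_m)\gamma_i$ is strictly increasing because multiplication by a positive integer is strictly order-preserving, and the hypothesis that $I$ has no last element upgrades ``$\leq 0$ for all $i$'' to ``$<0$ for all $i$'', which is what keeps the surviving form a \emph{strict} eventual minimum in your case (ii); note this also shows the hypothesis must be read as \emph{strictly} increasing $\gamma_i$ (as it is in the application, where $\gamma_i=v(a_{i+1}-a_i)$ for a pseudo-Cauchy sequence), since for a constant sequence the statement is false. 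Two small remarks: in your case (i) the discarded form $n$ is eventually \emph{strictly} above form $n-1$ (the strictly increasing difference can vanish at most once), so the bookkeeping of strictness goes through; and you can avoid the induction altogether by applying the dichotomy to all $\binom{n}{2}$ pairs at once --- past the maximum of the finitely many thresholds the $n$ forms are strictly totally ordered with a fixed order, and $h$ is its minimum. Either way the proof is complete.
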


For example, this lemma allows us to prove the following.

\begin{Prop}\cite[Lemma 5]{Kapl}\label{Propkapl1}
	If $\{a_i\}_{i\in I}$ is a pseudo-Cauchy sequence, such that $I^*$ does not have a last element, and $f(x)\in K[x]$, then $\{f(a_i)\}_{i\in I}$ is ultimately a pseudo-Cauchy sequence.
\end{Prop}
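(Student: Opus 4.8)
The statement asserts that for a pseudo-Cauchy sequence $\{a_i\}_{i\in I}$ with $I^*$ having no last element, and any $f\in K[x]$, the sequence $\{f(a_i)\}_{i\in I}$ is ultimately a pseudo-Cauchy sequence; that is, there exists $i_0\in I$ such that $v(f(a_i)-f(a_j))<v(f(a_j)-f(a_k))$ for all $i_0\le i<j<k$. The natural approach is to analyze the differences $f(a_j)-f(a_i)$ by Taylor-expanding $f$ around $a_i$, writing
\[
f(a_j)-f(a_i)=\sum_{t=1}^{n}c_t(a_i)\,(a_j-a_i)^t,
\]
where $c_t(a_i)=\frac{1}{t!}f^{(t)}(a_i)\in K$ and $n=\deg f$. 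The key observation is that, by the pseudo-Cauchy property, $v(a_j-a_i)=\gamma_i$ depends only on $i$ (for $j>i$), as recorded in the discussion preceding Lemma~\ref{Lemmabehpseudo}; and $\{\gamma_i\}_{i\in I^*}$ is strictly increasing with no last element.

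First I would fix a reference index and control the coefficients $c_t(a_i)$: since the sequence $\{a_i\}$ is pseudo-Cauchy, for $i$ large the $a_i$ all lie in a fixed ball, so $v(c_t(a_i))$ stabilizes, or at any rate one can choose $i$ large enough that the relevant valuations behave regularly (this uses Proposition~\ref{Propkapl1} itself applied to the lower-degree polynomials $c_t=\frac{1}{t!}f^{(t)}$, so the argument should be run by induction on $\deg f$, the base case $\deg f\le 1$ being immediate). Next, I would apply Lemma~\ref{lemasuperutil} with $\beta_t=v(c_t(a_i))$ (for a suitably fixed large $i$, or after passing to a tail where these are constant) and the increasing sequence $\{\gamma_j\}_{j>i}$ and the distinct positive exponents $t=1,\dots,n$: this yields a dominant index $h$ and a threshold beyond which $v(c_t(a_i)(a_j-a_i)^t)>v(c_h(a_i)(a_j-a_i)^h)$ for all $t\ne h$. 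By the ultrametric inequality (V2) with a strict winner, we get
\[
v(f(a_j)-f(a_i))=v(c_h(a_i))+h\gamma_i
\]
for all sufficiently large $i$ and all $j>i$ — the crucial point being that the right-hand side depends only on $i$, not on $j$.

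Finally, from $v(f(a_j)-f(a_i))=v(c_h(a_i))+h\gamma_i$ I would show the required strict monotonicity: for $i_0\le i<j<k$ in the tail, $v(f(a_j)-f(a_i))=v(c_h(a_i))+h\gamma_i$ while $v(f(a_k)-f(a_j))=v(c_h(a_j))+h\gamma_j$ (or with possibly a different dominant exponent at stage $j$ — one must be slightly careful here and instead use that $v(c_h(a_i))$ is eventually constant, equal to some $\beta$, and the dominant exponent $h$ is eventually the same; this is where the induction hypothesis applied to $c_h$ pins down the behaviour). Then the inequality reduces to $\beta+h\gamma_i<\beta+h\gamma_j$, i.e. $\gamma_i<\gamma_j$, which holds since $\{\gamma_i\}$ is strictly increasing. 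The main obstacle is the bookkeeping needed to guarantee that, on a common tail, (a) each $v(c_t(a_i))$ is constant in $i$, and (b) the dominant exponent $h$ produced by Lemma~\ref{lemasuperutil} can be taken independent of the base index $i$; handling this cleanly is exactly what forces the proof to proceed by induction on $\deg f$ so that Proposition~\ref{Propkapl1} is available for the derivatives $c_t$.
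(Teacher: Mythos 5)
There is a genuine gap, and it sits exactly where you flag it but do not resolve it: the behaviour of the coefficient values $v(c_t(a_i))$. Your argument needs, on a common tail, (a) each $v(c_t(a_i))$ constant in $i$ and (b) a dominant exponent $h$ independent of $i$, and you propose to get this either from ``the $a_i$ lie in a fixed ball'' (which gives nothing: membership in a ball does not stabilize values) or from the induction hypothesis. But the induction hypothesis only tells you that $\{c_t(a_i)\}_{i}$ is ultimately a pseudo-Cauchy sequence, and then Lemma~\ref{Lemmabehpseudo} gives a dichotomy: its values are \emph{either} ultimately constant \emph{or strictly increasing}. The increasing case really occurs and breaks the mechanism. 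Take $f=x^2$ in residue characteristic $\neq 2$ and a pseudo-Cauchy sequence with $v(a_i)=\gamma_i$ strictly increasing (pseudo-converging to $0$): then $c_1(a_i)=2a_i$ has strictly increasing values, so there is no tail on which $\beta_1=v(c_1(a_i))$ is constant; moreover the two Taylor terms have value $v(2a_i)+\gamma_i=2\gamma_i$ and $2\gamma_i$, so they tie for \emph{every} $i,j$, and no strict winner in the sense of Lemma~\ref{lemasuperutil} exists. If instead you freeze $\beta_1=v(c_1(a_{i_*}))$ at one large index $i_*$, Lemma~\ref{lemasuperutil} declares $h=1$ dominant with value $\beta_1+\gamma_i$, which is simply not equal to $v(f(a_j)-f(a_i))=2\gamma_i$. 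So the asserted identity $v(f(a_j)-f(a_i))=v(c_h(a_i))+h\gamma_i$ with $h$ and $v(c_h(a_i))$ eventually independent of $i$ is not established by your argument; Lemma~\ref{lemasuperutil} requires \emph{fixed} $\beta$'s and genuinely cannot be applied with base-point-dependent coefficients. (A smaller, fixable point: $c_t=f^{(t)}/t!$ is not available in positive characteristic; one should use the coefficients of the $(x-a_i)$-expansion, i.e.\ Hasse derivatives, as in Lemma~\ref{lemaGraumenorFixed}.)

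Note that the paper does not prove Proposition~\ref{Propkapl1} at all: it is quoted from Kaplansky \cite{Kapl}. The only place the paper runs your kind of computation is Lemma~\ref{lemaGraumenorFixed}, and there it is legitimate precisely because of the extra hypothesis that every polynomial of smaller degree has its value \emph{fixed} by $\underline a$ --- that hypothesis is what makes the $\beta$'s in Lemma~\ref{lemasuperutil} constants. To repair your induction you would need a stronger inductive statement, e.g.\ that for every $g$ of smaller degree the values $v(g(a_i))$ are eventually either constant or of the exact form $\beta_g+e_g\gamma_i$ with fixed $\beta_g$ and integer $e_g\geq 1$, and then an additional argument handling the resulting ties among the combined exponents $e_t+t$ (ruling out cancellation), or else switch to a different route such as factoring $f$ over $\overline K$ and analyzing $\overline v(a_i-c)$ root by root, which is exactly the device the paper uses later in Section~\ref{approcxsec}. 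As written, the proposal does not close this case, so the proof is incomplete.
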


 Hence, given a polynomial $f\in K[x]$, by Proposition \ref{Propkapl1} and Lemma \ref{Lemmabehpseudo}, $f(\underline a):=\{f(a_i)\}_{i\in I}$  is ultimately increasing or constant. If $f(\underline a)$ is ultimately constant, we denote by $\nu_{\underline{a}}(f)$ this constant value. Let
\[
S_{\underline a}:=\{f\in K[x]\mid f(\underline a)\mbox{ is ultimately constant}\}.
\]
Then we can consider the map
\[
\nu_{\underline a}:S_{\underline a}\lra \Gamma_\infty\mbox{ by }f\longmapsto \nu_{\underline a}(f).
\]

The next result follows easily from the definitions.
\begin{Lema}\label{remnkbosparcar}$\,$
	
	\noindent\textbf{(i):} If $f,g\in S_{\underline a}$, then $fg\in S_{\underline a}$ and
	\[
	\nu_{\underline{a}}(fg)=\nu_{\underline{a}}(f)+\nu_{\underline{a}}(g).
	\]
	\noindent\textbf{(ii):} If  $f, g, f+g\in S_{\underline{a}}$, then 
	\[
	\nu_{\underline{a}}(f+g)\geq \min\{\nu_{\underline{a}}(f), \nu_{\underline{a}}(g)\}.
	\]
\end{Lema}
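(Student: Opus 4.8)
The statement to be proved is Lemma \ref{remnkbosparcar}, which asserts that $S_{\underline a}$ is closed under multiplication and that $\nu_{\underline a}$ satisfies the multiplicativity \textbf{(i)} and the ultrametric inequality \textbf{(ii)} on $S_{\underline a}$. The plan is to argue directly from the definition of $S_{\underline a}$ together with the elementary fact that if two sequences in $\Gamma_\infty$ are each ultimately constant, then so are their termwise sum and product, and their ultimate values are, respectively, the sum and the minimum-dominated-by-sum of the ultimate values. Concretely, for $f,g\in S_{\underline a}$, both $\{f(a_i)\}_{i\in I}$ and $\{g(a_i)\}_{i\in I}$ are ultimately constant; choose $i_0\in I$ large enough that both have stabilized for $i\geq i_0$, say $f(a_i)=\nu_{\underline a}(f)$ and $g(a_i)=\nu_{\underline a}(g)$ for all $i\geq i_0$ (using that $I$ is well-ordered, one such common index exists — take the larger of the two stabilization indices).

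First I would prove \textbf{(i)}. For $i\geq i_0$ we have, since $v$ is a valuation on $K$,
\[
v\big((fg)(a_i)\big)=v\big(f(a_i)g(a_i)\big)=v(f(a_i))+v(g(a_i))=\nu_{\underline a}(f)+\nu_{\underline a}(g),
\]
which is independent of $i$. Hence $\{(fg)(a_i)\}_{i\in I}$ is ultimately constant, so $fg\in S_{\underline a}$, and its ultimate value is $\nu_{\underline a}(f)+\nu_{\underline a}(g)$; that is, $\nu_{\underline a}(fg)=\nu_{\underline a}(f)+\nu_{\underline a}(g)$. Next, for \textbf{(ii)}, assume in addition $f+g\in S_{\underline a}$, and enlarge $i_0$ so that $(f+g)(a_i)=\nu_{\underline a}(f+g)$ for all $i\geq i_0$ as well. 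Then for any $i\geq i_0$,
\[
\nu_{\underline a}(f+g)=v\big((f+g)(a_i)\big)=v\big(f(a_i)+g(a_i)\big)\geq\min\{v(f(a_i)),v(g(a_i))\}=\min\{\nu_{\underline a}(f),\nu_{\underline a}(g)\},
\]
again using property \textbf{(V2)} of the valuation $v$ on $K$. This is exactly the claimed inequality.

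There is no serious obstacle here: the lemma is a formal consequence of the definition of $S_{\underline a}$ and of the valuation axioms for $v$, and the only mild point to be careful about is the existence of a common stabilization index, which is immediate since $I$ is totally (indeed well-)ordered and a finite collection of eventually-constant sequences stabilizes past the maximum of their individual thresholds. One should also note implicitly that evaluation $f\mapsto f(a_i)$ is a ring homomorphism $K[x]\to K$, which is what lets us transport the identities $v(xy)=v(x)+v(y)$ and $v(x+y)\geq\min\{v(x),v(y)\}$ from $K$ to the level of the sequences; this is the content of the phrase ``follows easily from the definitions'' in the statement.
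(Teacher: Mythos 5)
Your argument is correct and is precisely the definitional argument the paper alludes to (it gives no proof, saying only that the result ``follows easily from the definitions''): pick a common stabilization index and transport \textbf{(V1)} and \textbf{(V2)} for $v$ through the evaluation homomorphism $f\mapsto f(a_i)$. The only blemish is a notational slip where you write $f(a_i)=\nu_{\underline{a}}(f)$ and $(f+g)(a_i)=\nu_{\underline{a}}(f+g)$; you mean $v(f(a_i))=\nu_{\underline{a}}(f)$ and $v\bigl((f+g)(a_i)\bigr)=\nu_{\underline{a}}(f+g)$, since $\nu_{\underline{a}}(f)\in\Gamma_\infty$ is the ultimate value of $v(f(a_i))$, not of $f(a_i)$ itself, and indeed your displayed computations use the correct version.
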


\begin{Def}
Take a pseudo-Cauchy sequence $\underline a=\{a_i\}_{i\in I}$ such that $I^*$ does not have a maximum. Then $\underline a$ is said to be of \textbf{transcendental type} if $S_{\underline a}=K[x]$. Otherwise, $\underline a$ is said to be of \textbf{algebraic type}. 
\end{Def}

We denote by
\[
\mathbb{S}_{\rm rt}=\left\{\{a_i\}_{i\in I}\in\mathbb S\mid I^*\mbox{ has a maximum}\right\}, 
\]
and
\[
\mathbb{S}_{\rm al}=\{\underline{a}\in\mathbb S\mid \underline{a}\mbox{ is of transcendental type}\}.
\]
Take a pseudo-Cauchy sequence $\underline a$ of algebraic type. Choose a monic polynomial $F\in K[x]$ of smallest degree among all the polynomials  not fixed by $\underline a$ and set $d=\deg(F)$. Consider the family $\mathfrak v=\{\nu_i\}_{i\in I^*}$, where $\nu_i=v_{a_i,\gamma_i}$, the cut $\delta$ having
\begin{equation}\label{eqdocutmod}
\delta^L=\{\alpha\mid \alpha\leq \nu_i(F)\mbox{ for some }i\in I^*\}
\end{equation}
as the left cut set and $\gamma=\sup \delta^L\in \Lambda_\infty$ (as in Section \ref{Secincreafamil}).
We define
\[
\mathbb{S}_{\rm vt}=\{\underline{a}\in\mathbb S\mid \underline{a}\mbox{ is of algebraic type and }\gamma\neq \infty\}
\]
and 
\[
\mathbb{S}_{\rm nt}=\{\underline{a}\in\mathbb S\mid \underline{a}\mbox{ is of algebraic type and }\gamma= \infty\}.
\]
By definition
\[
\mathbb S=\mathbb{S}_{\rm rt}\sqcup \mathbb{S}_{\rm al}\sqcup \mathbb{S}_{\rm vt}\sqcup \mathbb{S}_{\rm nt}.
\]
\subsection{Pseudo-Cauchy sequences and families of valuations}

For a pseudo-Cauchy sequence $\underline a  =\{ a_i \}_{i\in I}$  and each $i\in I^*$ we consider the valuation
\[
\nu_i=v_{a_i,\gamma_i}.
\]
Then, $\{\nu_i\}_{i\in I^*}$ is a totally ordered subset of $\mathcal{V}$. Indeed, since $\{\gamma_i\}_{i\in I^*}$ is increasing and for $i,j\in I^*$ with $i<j$ we have
\[
v(a_j- a_i)= \gamma_i
\]
it follows from Lemma~\ref{lemNuaGammaMenorIgual} that
\[
\nu_i(f)\leq \nu_j(f)\mbox{ for every }f\in K[x]. 
\]
Also, for $f=x-a_j$ we have
\[
\nu_j(f)=\gamma_j>\gamma_i=\min\{\gamma_i,v(a_j-a_i)\}=\nu_i(f).
\]

\begin{Lema}\label{lemaGraumenorFixed}
Assume that $\underline a=\{a_i\}_{i\in I}\in \mathbb{S}_{\rm vt}\sqcup \mathbb{S}_{\rm al}$. Take a polynomial $g$ and assume that every polynomial of smaller degree is fixed by $\underline a$. Then for $i\in I$ large enough we have
\[
v(g(a_i)) = \nu_i(g).
\]	
\end{Lema}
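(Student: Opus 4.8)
The plan is to unwind the definitions and compute $\nu_i(g) = v_{a_i,\gamma_i}(g)$ via the $(x-a_i)$-expansion of $g$, and show that for $i$ large the constant term dominates. Write the $(x-a_i)$-expansion as $g = \sum_{j=0}^r g_j^{(i)} (x-a_i)^j$ with $g_j^{(i)}\in K$; note that $g_0^{(i)} = g(a_i)$ and that the coefficients $g_j^{(i)}$ for $j\ge 1$ are, up to factorials, the values of the derivatives $g^{(j)}(a_i)$, so they are obtained by applying polynomials of degree $< \deg(g)$ (namely $g^{(j)}/j!$) to the $a_i$'s. By hypothesis every polynomial of degree smaller than $\deg(g)$ is fixed by $\underline a$, hence $\{v(g_j^{(i)})\}_{i\in I}$ is ultimately constant for each fixed $j\ge 1$; call these eventual constants $\beta_j$. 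Then $\nu_i(g) = \min\{v(g(a_i)),\ \min_{1\le j\le r}(\beta_j + j\gamma_i)\}$ for $i$ large enough.

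Next I would control $v(g(a_i))$. Since $\underline a\in \mathbb{S}_{\rm vt}\sqcup\mathbb{S}_{\rm al}$, the sequence $I^*$ has no maximum and $g$ is either fixed or not fixed by $\underline a$. I would apply Lemma~\ref{lemasuperutil} to the finite list of exponents $0,1,\dots,r$ with the elements $\beta_0 := v(g(a_i))$ replaced appropriately: more carefully, first observe that $v(g(a_i)-g(a_j)) = \min_{1\le j\le r}\{v(g_j^{(j)}) + j\,v(a_i - a_j)\}$ type expansions (compare $g$ at $a_i$ versus a fixed $a_k$) show that $\{g(a_i)\}$ is ultimately pseudo-Cauchy by Proposition~\ref{Propkapl1}, and its behavior is governed by the terms $\beta_j + j\gamma_i$ for $j\ge 1$. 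The key point is that for $i$ large, $v(g(a_i))$ grows strictly (if $g$ is unstable) or stabilizes, but in either case, using Lemma~\ref{lemasuperutil} applied to $\{\beta_j\}_{1\le j\le r}$ and $\{\gamma_i\}$, there is a unique index $h\ge 1$ with $\beta_h + h\gamma_i < \beta_l + l\gamma_i$ for all $l\ne h$, $l\ge 1$ and $i$ large; and one checks that the $(x-a_i)$-expansion forces $v(g(a_i)) \ge \beta_h + h\gamma_i$ always, with equality impossible to beat from below since $g(a_i) = g(a_{i+1}) - \sum_{j\ge 1} g_j^{(i+1)}(a_i - a_{i+1})^j$ and iterating. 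This yields $v(g(a_i)) = \beta_h + h\gamma_i$ for $i$ large, whence $v(g(a_i)) = \min\{v(g(a_i)), \min_{j\ge 1}(\beta_j+j\gamma_i)\} = \nu_i(g)$.

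The cleanest route, which I would actually write, avoids the derivative bookkeeping: fix a large $k\in I^*$ and expand $g$ in powers of $(x-a_k)$ as $g = \sum_j c_j (x-a_k)^j$ with $c_j\in K$, $c_0 = g(a_k)$, $\deg$-considerations giving $c_j = (\text{poly of degree} < \deg g)(a_k)$ for $j\ge 1$. For $i\ge k$ we have $a_i - a_k$ with $v(a_i-a_k) = \gamma_k$, so $g(a_i) = \sum_j c_j(a_i-a_k)^j$ and $v(g(a_i)) \ge \min_j\{v(c_j) + j\gamma_k\}$. Running $k\to\infty$ along $I^*$ and using that each $c_j$ ($j\ge1$) has ultimately constant value $\beta_j$, together with $\gamma_k\to\gamma$ (the supremum, possibly $\infty$), pins down $v(g(a_i))$ for $i$ large as exactly the minimum appearing in the monomial valuation $v_{a_i,\gamma_i}(g)$, because the $j=0$ term $v(c_0)=v(g(a_k))$ is itself of this form by induction on the previous stage. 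I expect the main obstacle to be making the "$j=0$ term is already governed by the $j\ge 1$ terms at an earlier stage" argument precise without circularity — i.e., correctly invoking Lemma~\ref{lemasuperutil} so that the dominant exponent $h$ is $\ge 1$ and stays $\ge 1$, which is exactly where the hypothesis that all lower-degree polynomials are fixed (so $g$ itself is the first place instability can enter) is used.
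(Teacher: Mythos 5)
Your first paragraph is exactly the paper's approach: expand $g$ in powers of $(x-a_i)$, note that the coefficients in degrees $j\ge 1$ are values at $a_i$ of polynomials of degree smaller than $\deg(g)$ (use the Hasse--Taylor coefficients $\partial_jg$ here rather than $g^{(j)}/j!$, which need not make sense in positive characteristic), so by hypothesis their values are ultimately constant, say $\beta_j$, and apply Lemma~\ref{lemasuperutil} to obtain a single index $h\ge 1$ with $\beta_h+h\gamma_i<\beta_l+l\gamma_i$ for all $l\ne h$, $l\ge 1$, and all large $i$; hence $\nu_i(g)=\min\{v(g(a_i)),\beta_h+h\gamma_i\}$ eventually. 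Up to this point you and the paper agree.

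The gap is in how you compare $v(g(a_i))$ with $\beta_h+h\gamma_i$. The conclusion requires $v(g(a_i))\le\beta_h+h\gamma_i$ for large $i$ (so that the minimum is attained at the constant term), but you assert that the expansion ``forces $v(g(a_i))\ge\beta_h+h\gamma_i$ always'' and then that equality holds eventually. Both claims fail precisely when $g$ is fixed by $\underline a$: then $v(g(a_i))$ is ultimately constant while $\beta_h+h\gamma_i$ is strictly increasing (since $h\ge1$ and the $\gamma_i$ strictly increase), so the two can agree at most once. This case cannot be excluded; it is exactly the case needed in the induction of Lemma~\ref{lemUnstableNotFixed}, where the present lemma is applied to stable $g$ in order to conclude that $g$ is fixed. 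Your ``iterating'' identity $g(a_i)=g(a_{i+1})-\sum_{j\ge1}\partial_jg(a_{i+1})(a_i-a_{i+1})^j$ only yields $v(g(a_i))\ge\min\{v(g(a_{i+1})),\beta_h+h\gamma_i\}$, which gives nothing when the first entry is the smaller one, so it does not establish your inequality. The step is closed as follows: for $j>i$ both large, evaluating the $(x-a_i)$-expansion at $a_j$ gives $v(g(a_j)-g(a_i))=\beta_h+h\gamma_i$, because the minimum over $l\ge1$ is attained only at $l=h$; on the other hand $\{g(a_i)\}_{i\in I}$ is ultimately a pseudo-Cauchy sequence (Proposition~\ref{Propkapl1}), so by Lemma~\ref{Lemmabehpseudo} its values are ultimately increasing or ultimately constant, whence $v(g(a_i))\le v(g(a_j))$ and therefore $\beta_h+h\gamma_i=v(g(a_j)-g(a_i))\ge\min\{v(g(a_j)),v(g(a_i))\}=v(g(a_i))$, which is the inequality needed (equality holds only when $g$ is not fixed). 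Your alternative ``cleanest route'' produces lower bounds of the same kind and, as you yourself note, leaves precisely this step open.
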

\begin{proof}
	For any $i\in I$ the $x-a_i$-expansion of $g$ is
	$$g(x) = g(a_i)+\partial_1g(a_i)(x-a_i)+\ldots +\partial_rg(a_i)(x-a_i)^r. $$
	Since for every $l$, $1\leq l\leq r$, the value of $\partial_lg$ is fixed, by Lemma~\ref{lemasuperutil}
	%Lemma 4 of \cite{Kapl}
	there exists $h$, $1\leq h \leq r$, such that for $i\in I$ large enough we have
	$$v(\partial_hg(a_i))+h\gamma_i<\underset{l\neq h}{\min}\{ v(\partial_lg(a_i))+l\gamma_i \} $$
	and $v(g(a_i)) = v(\partial_hg(a_i))+h\gamma_i. $ Hence,
	\begin{align*}
		\nu_i(g) & = \underset{0\leq l \leq r}{\min}\{ v(\partial_lg(a_i))+l\gamma_i  \}\\
		& = \min\left\{ v(g(a_i)), \underset{1\leq l \leq r}{\min}\{v(\partial_hg(a_i))+h\gamma_i\}\right\}\\
		& = v(g(a_i)).
	\end{align*}
\end{proof}

\begin{Lema}\label{lemUnstableNotFixed}
Take $\underline a=\{a_i\}_{i\in I}\in \mathbb{S}_{\rm vt}\sqcup \mathbb{S}_{\rm al}$ and consider the family $\mathfrak{v}_{\underline a} = \{ \nu_i \}_{i\in I^*}$. A polynomial $f$ has the minimal degree among $\mathfrak{v}_{\underline a}$-unstable polynomials if and only if it has the smallest degree among polynomials with value not fixed by $\underline a$.
	
\end{Lema}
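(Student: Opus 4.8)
The plan is to unwind the definitions on both sides and show the two notions of ``minimal degree'' coincide by a double inequality, using Lemma~\ref{lemaGraumenorFixed} as the bridge between the valuations $\nu_i = v_{a_i,\gamma_i}$ and the values $v(f(a_i))$. Recall that $f$ being fixed by $\underline a$ means exactly $f\in S_{\underline a}$, i.e. $\{f(a_i)\}_{i\in I}$ is ultimately constant, whereas $f$ being $\mathfrak v_{\underline a}$-stable means $\{\nu_i(f)\}_{i\in I^*}$ is ultimately constant. So the statement is that the smallest degree at which $\{v(f(a_i))\}$ fails to stabilize equals the smallest degree at which $\{\nu_i(f)\}$ fails to stabilize.

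First I would handle the easy direction. Let $m$ be the minimal degree of a polynomial not fixed by $\underline a$ (this exists since $\underline a\in\mathbb{S}_{\rm vt}\sqcup\mathbb{S}_{\rm al}$, hence is of algebraic type, so $S_{\underline a}\neq K[x]$). Take $f$ with $\deg f < m$; then every polynomial of degree $\le\deg f$ is fixed by $\underline a$, so by Lemma~\ref{lemaGraumenorFixed} (applied to $f$ and, implicitly, to the lower-degree coefficients of its $(x-a_i)$-expansion) we get $\nu_i(f) = v(f(a_i))$ for $i$ large, and the right-hand side is ultimately constant; hence $f$ is $\mathfrak v_{\underline a}$-stable. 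Contrapositively, any $\mathfrak v_{\underline a}$-unstable polynomial has degree $\ge m$. This shows the minimal degree of a $\mathfrak v_{\underline a}$-unstable polynomial is $\ge m$.

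For the reverse inequality, I would take $F$ a monic polynomial of degree exactly $m$ not fixed by $\underline a$ — so $\{v(F(a_i))\}$ is not ultimately constant, and since it is ultimately a pseudo-Cauchy sequence (Proposition~\ref{Propkapl1}) combined with Lemma~\ref{Lemmabehpseudo}, it must be ultimately strictly increasing. Now apply Lemma~\ref{lemaGraumenorFixed} to $F$: every polynomial of degree $< m$ is fixed by $\underline a$, so for $i$ large $\nu_i(F) = v(F(a_i))$, which is strictly increasing; hence $F$ is $\mathfrak v_{\underline a}$-unstable. This exhibits a $\mathfrak v_{\underline a}$-unstable polynomial of degree $m$, so the minimal degree of a $\mathfrak v_{\underline a}$-unstable polynomial is $\le m$. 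Combining the two inequalities, the minimal degrees agree, and moreover the polynomials realizing the minimum on each side are literally the same set: a polynomial $f$ of degree $m$ is $\mathfrak v_{\underline a}$-unstable iff it is not fixed by $\underline a$, by the same application of Lemma~\ref{lemaGraumenorFixed} (valid because all lower-degree polynomials are fixed).

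The only delicate point — the ``main obstacle'' — is making sure Lemma~\ref{lemaGraumenorFixed} is being invoked legitimately: its hypothesis is that \emph{every polynomial of smaller degree} than the given one is fixed by $\underline a$, and in the reverse direction one must know this holds for $F$, which is exactly the minimality of $m = \deg F$. One should also double-check the edge case $\deg F = m = 1$ (then there are no lower-degree nonconstant polynomials and the statement $\nu_i(x-a) = v(a_i - a) = \gamma_i$-type computation is immediate from the definition of $v_{a_i,\gamma_i}$), and note that the hypothesis $\underline a\notin\mathbb S_{\rm rt}$ (i.e. $I^*$ has no maximum) is what lets us apply Proposition~\ref{Propkapl1} and Lemma~\ref{lemasuperutil} inside Lemma~\ref{lemaGraumenorFixed}. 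With those checks in place the argument is a clean double inequality.
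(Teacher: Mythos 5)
Your core mechanism is the same as the paper's: use Lemma~\ref{lemaGraumenorFixed}, whose hypothesis (all lower-degree polynomials fixed) is supplied exactly by minimality, to identify $\nu_i(f)$ with $v(f(a_i))$ for large $i$ and transfer (in)stability to (non)fixedness. However, there is a genuine gap at the very first step: you claim that the minimal degree $m$ of a polynomial not fixed by $\underline a$ exists ``since $\underline a\in\mathbb{S}_{\rm vt}\sqcup\mathbb{S}_{\rm al}$, hence is of algebraic type.'' This misreads the paper's notation: $\mathbb{S}_{\rm al}$ is by definition the set of pseudo-Cauchy sequences of \emph{transcendental} type (the subscript refers to the associated valuation being valuation-algebraic, not to the sequence being of algebraic type). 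For $\underline a\in\mathbb{S}_{\rm al}$ every polynomial is fixed by $\underline a$, so $m$ does not exist and your double inequality never gets off the ground. In that case the lemma is not vacuous: since no $f$ satisfies the right-hand condition, one must show no $f$ satisfies the left-hand one either, i.e.\ that there are no $\mathfrak v_{\underline a}$-unstable polynomials at all --- and this is precisely the content later exploited in Remark~\ref{obsFixedStable}, so it cannot be waved away.

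The omission is easily repaired within your own framework: if every polynomial is fixed by $\underline a$, then the hypothesis of Lemma~\ref{lemaGraumenorFixed} holds for every $g$, so $\nu_i(g)=v(g(a_i))$ is ultimately constant for every $g$, hence every polynomial is $\mathfrak v_{\underline a}$-stable and both sides of the equivalence are unsatisfiable. Note that the paper sidesteps the issue by arguing in the other direction: it starts from a minimal-degree \emph{unstable} polynomial and proves by induction on degree that all polynomials of smaller degree are fixed, which handles the two cases uniformly without ever assuming a non-fixed polynomial exists; your organization around $m$ avoids that induction but forces you to treat the case ``$m$ undefined'' separately. The remainder of your argument (degree $<m$ implies stable; degree $m$ not fixed implies unstable via Proposition~\ref{Propkapl1} and Lemma~\ref{Lemmabehpseudo}, the strict increase being unnecessary but harmless; agreement of the two minimal sets at degree $m$) is correct.
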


\begin{proof}Assume $f$ has the smallest degree $n$ among unstable polynomials. We prove by induction that every polynomial of degree smaller than $n$ (hence stable) has the value fixed by $\underline a$. Indeed, if $n=1$, then every constant polynomial is fixed by $\underline a$, hence the result follows. Suppose $g$ is such that $\deg(g)<n$ and every polynomial of smaller  degree is fixed by $\underline a$.  By  Lemma~\ref{lemaGraumenorFixed}
\[
v(g(a_i)) = \nu_i(g)\mbox{ for }i\in I^*\mbox{ large enough}.
\]
Since $g$ is stable, $v(g(a_i))$ is ultimately constant, that is,  $g$ is fixed by $\underline a$. Using again Lemma~\ref{lemaGraumenorFixed}, we have
\[
v(f(a_i)) = \nu_i(f)\mbox{ for }i\in I^*\mbox{ large enough}.
\]
Since $f$ is unstable, it follows that $f$ is not fixed by $\underline a$ and has the minimal degree with this property. 
	
For the converse, if $f$ has the smallest degree $n$  among polynomials with value not fixed by $\underline a$, then every polynomial with degree smaller than $n$ has its value fixed by $\underline a$. By Lemma~\ref{lemaGraumenorFixed},
\[
v(f(a_i)) = \nu_i(f)\mbox{ for }i\mbox{ large enough}.
\]
Therefore, $f$ is unstable and has minimal degree with this property. 
\end{proof}

\begin{Obs}\label{obsFixedStable}
The lemma above implies that  $\underline a$ is of transcendental type if and only if all polynomials are $\mathfrak{v}_{\underline a}$-stable. Moreover, in this case
\[
v(f(a_i)) = \nu_i(f)\mbox{ for }i\mbox{ large enough}.
\]
\end{Obs}

\begin{Cor}\label{corTransTypeVal}
	If $\underline{a}$ is of transcendental type, then $\nu_{\underline a}$ is a valuation on $K[x]$. Moreover,
\[
\nu_{\underline{a}}= \sup_{i\in I^*}\nu_i. 
\]
\end{Cor}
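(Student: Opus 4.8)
The plan is to deduce this corollary directly from the machinery on increasing families developed in Section~\ref{Secincreafamil}, via Remark~\ref{obsFixedStable}. First I would invoke Remark~\ref{obsFixedStable}: since $\underline a$ is of transcendental type, every polynomial $f\in K[x]$ is $\mathfrak v_{\underline a}$-stable, where $\mathfrak v_{\underline a}=\{\nu_i\}_{i\in I^*}$. Thus the hypotheses of Proposition~\ref{lemVstable} are satisfied (noting $\mathfrak v_{\underline a}$ has no largest element because $I^*$ has no maximum, as required by the definition of transcendental type), and it follows that the map $f\mapsto \nu_{i_f}(f)$ belongs to $\mathcal V$ and is the supremum of $\mathfrak v_{\underline a}$.

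The second step is to identify this supremum with $\nu_{\underline a}$. By definition $\nu_{\underline a}(f)$ is the ultimate constant value of $\{f(a_i)\}_{i\in I}$, while the supremum assigns to $f$ the ultimate constant value of $\{\nu_i(f)\}_{i\in I^*}$. By the last assertion of Remark~\ref{obsFixedStable}, $v(f(a_i))=\nu_i(f)$ for $i$ large enough; since both sides are eventually constant in $i$, their common eventual value is exactly $\nu_{\underline a}(f)$. Hence $\nu_{\underline a}(f)=\sup_{i\in I^*}\nu_i(f)$ for every $f$, so $\nu_{\underline a}$ coincides with the valuation produced by Proposition~\ref{lemVstable}, which shows both that $\nu_{\underline a}$ is a valuation on $K[x]$ (it is defined on all of $K[x]$ precisely because $S_{\underline a}=K[x]$ for transcendental type) and that $\nu_{\underline a}=\sup_{i\in I^*}\nu_i$.

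I do not expect a serious obstacle here: the corollary is essentially a bookkeeping combination of Proposition~\ref{lemVstable} and Remark~\ref{obsFixedStable}. The only point requiring a little care is matching the indexing sets $I$ and $I^*$ (the value of $f$ on the index $i_{\max}$, if it existed, is irrelevant, but for sequences of transcendental type $I^*=I$ has no maximum anyway) and confirming that "ultimately constant" in the definition of $S_{\underline a}$ corresponds to "$\mathfrak v_{\underline a}$-stable" under the identification $v(f(a_i))=\nu_i(f)$ — both of which are already handled by Remark~\ref{obsFixedStable}.
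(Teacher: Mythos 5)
Your proposal is correct, and its substantive part (the identification $\nu_{\underline a}=\sup_{i\in I^*}\nu_i$) follows exactly the paper's route: Remark~\ref{obsFixedStable} gives stability of every polynomial together with $v(f(a_i))=\nu_i(f)$ for large $i$, and Proposition~\ref{lemVstable} then produces the supremum, which coincides with $\nu_{\underline a}$ by definition. The only deviation is in how you obtain that $\nu_{\underline a}$ is a valuation: the paper verifies \textbf{(V1)}--\textbf{(V3)} directly from Lemma~\ref{remnkbosparcar} (using $S_{\underline a}=K[x]$), whereas you get it as a byproduct of the fact that the stable-limit map of Proposition~\ref{lemVstable} lies in $\mathcal V$; this is a legitimate and slightly more economical shortcut, at the cost of leaning entirely on the cited Proposition~\ref{lemVstable} rather than on the elementary Lemma~\ref{remnkbosparcar}. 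Your side remarks (that $I^*$ has no maximum by the definition of transcendental type, so the family has no largest element, and that there is no circularity) are accurate.
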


\begin{proof} Condition $\textbf{(V3)}$ follows directly from the fact that $v$ is a valuation and that $\nu_{\underline{a}}(f)=v(f)$ for every $f\in K$. If $\underline a$ is of transcendental type, then $S_{\underline a}=K[x]$ and conditions \textbf{(V1)} and \textbf{(V2)} follow from Lemma \ref{remnkbosparcar}. Therefore, $\nu_{\underline a}$ is a valuation on $K[x]$. 
	By Remark~\ref{obsFixedStable}, Proposition~\ref{lemVstable} and the definition of $\nu_{\underline{a}}$ we have $\nu_{\underline{a}}=  \displaystyle \sup_{i\in I^*} \nu_i$.
	
	\end{proof}
\begin{Obs}
The valuation $\nu_{\underline a}$ is the same as the one in Kaplansky's Theorem 2 (in \cite{Kapl}).
\end{Obs}

Take $F$ a polynomial of smallest degree not fixed by $\underline a$ and $\gamma=\sup \delta^L$ with notation as in \eqref{eqdocutmod}. The next result follows easily from the definitions and from Proposition \ref{lemSupAlgType}.
\begin{Prop}\label{corAlgTypeVal}
If $\alpha\in \Lambda_\infty$ with $\alpha\geq \gamma$, then $\mu_{F,\alpha}$ is a valuation on $K[x]$. Moreover,
\[
\mu_{F,\gamma}=\sup_{i\in I^*}\nu_i
\] 
and $\mu_{F,\gamma}$ does not depend on the choice of $F$ among polynomials of smallest degree not fixed by $\underline{a}$.
\end{Prop}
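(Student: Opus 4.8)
The plan is to reduce everything to results already established about increasing families of valuations, in particular Proposition~\ref{lemSupAlgType}, and to identify the family $\mathfrak v_{\underline a}=\{\nu_i\}_{i\in I^*}$ appearing here with the abstract family $\mathfrak v$ of Section~\ref{Secincreafamil}. First I would record that, by Lemma~\ref{lemUnstableNotFixed}, the polynomials of smallest degree not fixed by $\underline a$ are exactly the $\mathfrak v_{\underline a}$-unstable polynomials of smallest degree; in particular $F$ is a monic $\mathfrak v_{\underline a}$-unstable polynomial of minimal degree $d$, and all polynomials in $K[x]_d$ are $\mathfrak v_{\underline a}$-stable, so $\nu_{\mathfrak v_{\underline a}}$ is well-defined on $K[x]_d$. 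Next I would check that the cut $\delta$ defined by \eqref{eqdocutmod}, namely $\delta^L=\{\alpha\mid\alpha\le\nu_i(F)\text{ for some }i\in I^*\}$, coincides with the cut appearing just before Proposition~\ref{lemSupAlgType} (whose left cut set is $\{\alpha\in\Gamma_\Q\mid\alpha\le\nu_i(Q)\text{ for some }i\in I\}$ with $Q$ a monic $\mathfrak v$-unstable polynomial of smallest degree); this is immediate once $F$ is known to be such a $Q$. One small point to address: one must confirm that $\mathfrak v_{\underline a}$ has no maximum — this holds because for $\underline a\in\mathbb S_{\rm vt}\sqcup\mathbb S_{\rm al}$ we are in the case $I^*$ has no last element, and the computation $\nu_j(x-a_j)=\gamma_j>\gamma_i=\nu_i(x-a_j)$ done just before Lemma~\ref{lemaGraumenorFixed} shows the family is strictly increasing without largest element.

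Having made this identification, the bulk of the proposition follows directly. For the first assertion, note $\gamma=\sup\delta^L=x_\delta\in\Lambda_\infty$, so for $\alpha\in\Lambda_\infty$ with $\alpha\ge\gamma$ we have $\alpha>\nu_i(F)$ for every $i\in I^*$; this is precisely the hypothesis under which Proposition~\ref{propMuQGamma} applies to $Q=F$, giving that $\mu_{F,\alpha}$ belongs to $\mathcal V$, hence is a valuation on $K[x]$. (When $\alpha=\gamma$ we interpret $\mu_{F,\gamma}$ via the value group $\Lambda$ containing $x_\delta$; if $\gamma=\infty$ — the case $\underline a\in\mathbb S_{\rm nt}$ — then $\mu_{F,\gamma}$ has nontrivial support $FK[x]$, and one should remark that it is still a valuation in the sense of the paper's definition, since \textbf{(V1)}, \textbf{(V2)}, \textbf{(V3)} are checked by the same $Q$-expansion formula.) For the identity $\mu_{F,\gamma}=\sup_{i\in I^*}\nu_i$ and the independence of the choice of $F$, I would simply invoke Proposition~\ref{lemSupAlgType} applied to $\mathfrak v=\mathfrak v_{\underline a}$ and $Q=F$: that proposition states that both the cut $\delta$ and the valuation $\mu_{Q,\gamma}$ are independent of the choice of minimal-degree monic $\mathfrak v$-unstable polynomial, and that $\mu_{Q,\gamma}=\sup_{i\in I}\nu_i$. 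By Lemma~\ref{lemUnstableNotFixed} the set of admissible $F$'s (smallest degree, not fixed by $\underline a$) equals the set of admissible $Q$'s (smallest degree, $\mathfrak v_{\underline a}$-unstable), so the independence transfers verbatim.

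The main — and really only — obstacle is bookkeeping rather than mathematics: making sure the two descriptions of the cut genuinely agree and that the degenerate case $\gamma=\infty$ is handled cleanly, since Section~\ref{Secincreafamil} was implicitly phrased for $\gamma\in\Lambda$ (finite) and one wants $\mu_{F,\infty}$ to still make sense as the support-$FK[x]$ valuation; I would dispatch this by observing that the $F$-expansion formula $\mu_{F,\gamma}(f)=\min_j\{\nu_{\mathfrak v_{\underline a}}(f_j)+j\gamma\}$ with $\gamma=\infty$ returns $\nu_{\mathfrak v_{\underline a}}(f_0)$, i.e. the valuation is pulled back along $K[x]\to K[x]/FK[x]$, and this is still the supremum of the $\nu_i$ in $\mathcal V$ because any $\mu\ge\nu_i$ for all $i$ must have $\mu(F)>\nu_i(F)$ for all $i$, forcing $\mu(F)=\infty$ (there is no finite upper bound of $\{\nu_i(F)\}$ in the relevant value group when the cut $\delta^L$ is cofinal in $\Gamma_\Q$), and then the same $F$-expansion inequality used in the proof of Proposition~\ref{lemSupAlgType} gives $\mu(f)\ge\mu_{F,\infty}(f)$. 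With these remarks in place the proposition is proved.
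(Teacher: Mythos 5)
Your main line of argument is exactly the paper's: the paper proves this proposition simply by observing that it ``follows easily from the definitions and from Proposition~\ref{lemSupAlgType}'', and the bookkeeping you supply (Lemma~\ref{lemUnstableNotFixed} to identify the monic polynomials of smallest degree not fixed by $\underline a$ with the monic $\mathfrak v_{\underline a}$-unstable polynomials of smallest degree, the remark that $\{\nu_i\}_{i\in I^*}$ is strictly increasing without a largest element, Proposition~\ref{propMuQGamma} for the first assertion, and Proposition~\ref{lemSupAlgType} for the supremum and the independence of $F$) is precisely the intended reduction.

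One caveat concerns your extra justification of the case $\gamma=\infty$. The parenthetical claim that there is no finite upper bound of $\{\nu_i(F)\}_{i\in I^*}$ in the relevant value group cannot mean $\Lambda=\R_{\rm sme}$, yet that is what your argument needs, since you apply it to $\mu(F)\in\Lambda_\infty$ for an arbitrary upper bound $\mu\in\mathcal V$. By the isomorphism $\Gamma_{\rm sme}\cong{\rm Qcuts}(\Gamma_\Q)$ recalled in Section~\ref{SecV}, the cut $(\Gamma_\Q,\emptyset)$ corresponds to an element $\beta\in\Gamma_{\rm sme}\subset\Lambda$ with $\beta>\Gamma_\Q$; in particular $\beta$ is a finite upper bound of $\{\nu_i(F)\}_{i\in I^*}$ even when $\delta^L$ is cofinal in $\Gamma_\Q$, and Proposition~\ref{propMuQGamma} then produces an upper bound $\mu_{F,\beta}$ of the family with $\mu_{F,\beta}(F)=\beta<\infty$. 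So the inference ``$\mu(F)>\nu_i(F)$ for all $i$ forces $\mu(F)=\infty$'' does not follow, and that patch should be dropped. The paper itself treats this degenerate case only through its convention $\gamma=\sup\delta^L\in\Lambda_\infty$ and the blanket appeal to Proposition~\ref{lemSupAlgType}; your main reduction is faithful to that, but the additional argument for $\gamma=\infty$ is not correct as written.
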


\begin{Obs}
The valuation obtained in Kaplansky's Theorem 3 (in \cite{Kapl}) corresponds to the valuation on the proposition above taking $\alpha=\infty$. Moreover, in this case, if $G$ is another monic polynomial of minimal degree not fixed by $\underline{a}$, then
\[
\mu_{F,\infty}\neq \mu_{G,\infty}.
\]
Indeed, suppose $\mu:=\mu_{F,\infty}=\mu_{G,\infty}$. Since $\mu(F)= \mu(G) = \infty$, we have
	$$\mu(F-G) \geq \min\{ \mu(F), \mu(G)  \} =\infty.$$
Since $\deg(F-G)<\deg(F)$, this implies that there exists $i_0\in I^*$ such that 
	$$
	\infty = \mu_{F,\infty}(F-G) = v((F-G)(a_i)) \text{ for every } i\in I, i>i_0.\\
	$$
This implies $(F-G)(a_i)=0$ for every $i\in I, i>i_0$. Since there are infinitely many such $i$'s, we must have $F=G$. 
\end{Obs}

We are now ready to define the map $\Psi$.
\begin{Teo}
	The function $\Psi:\mathbb S\lra \mathbb V$ given by
\[
\Psi(\underline{a})=[\mu]\mbox{ where }\mu=\sup_{i\in I^*}\nu_i
\]
is well-defined. Moreover,
	\begin{align*}
\Psi(\underline a)= \begin{cases}
			\nu_{\underline a}, &\text{ if } \underline{a} \text{ is of transcendental type},\\
			\mu_{F,\gamma}, & \text{ if } \underline{a} \text{ is of algebraic type},\\
			\nu_{i^*_{\max}}, &  \text{ if } I^* \text{ has a maximum }i^*_{\max}.
		\end{cases} 
	\end{align*}
\end{Teo}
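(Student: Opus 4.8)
The plan is to verify the three pieces of the formula case by case, since each case has already been essentially treated, and then to assemble them into a proof that $\Psi$ is well-defined. The three cases ($\underline a$ of transcendental type, $\underline a$ of algebraic type, $I^*$ has a maximum) are mutually exclusive and exhaust $\mathbb S$: if $I^*$ has a maximum we are in the first branch of the displayed formula, and if $I^*$ has no maximum then $\underline a$ is either of transcendental type or of algebraic type by definition. So it suffices to show that in each case the supremum $\sup_{i\in I^*}\nu_i$ exists in $\mathcal V$ and equals the claimed valuation; well-definedness of $\Psi$ (i.e.\ that it lands in $\mathbb V$ and does not depend on spurious choices) then follows because in every case the output is a genuine valuation extending $v$, hence determines a class in $\mathbb V$.

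First I would dispose of the case $I^* = I\setminus\{i_{\max}\}$, i.e.\ $I^*$ has a maximum $i^*_{\max}$. Here $\{\nu_i\}_{i\in I^*}$ is a totally ordered family with a largest element $\nu_{i^*_{\max}}$, which is visibly its supremum; and $\nu_{i^*_{\max}} = v_{a_{i^*_{\max}},\gamma_{i^*_{\max}}}$ is a monomial valuation, hence lies in $\mathcal V$ by the Corollary following Theorem~\ref{lemasobreooutrocoiso} (and is residue-transcendental by Lemma~\ref{lemqnSobrecTransc1}). Next, the transcendental-type case is exactly Corollary~\ref{corTransTypeVal}: there $S_{\underline a} = K[x]$, every polynomial is $\mathfrak v_{\underline a}$-stable by Remark~\ref{obsFixedStable}, so Proposition~\ref{lemVstable} gives that $\nu_{\underline a} = \sup_{i\in I^*}\nu_i \in \mathcal V$. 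Finally, the algebraic-type case (covering both $\mathbb S_{\rm vt}$ and $\mathbb S_{\rm nt}$) is Proposition~\ref{corAlgTypeVal}: choosing a monic $F$ of smallest degree not fixed by $\underline a$, Lemma~\ref{lemUnstableNotFixed} identifies $F$ as a $\mathfrak v_{\underline a}$-unstable polynomial of minimal degree, and then Proposition~\ref{lemSupAlgType} (via Proposition~\ref{propMuQGamma}) shows $\mu_{F,\gamma} = \sup_{i\in I^*}\nu_i \in \mathcal V$, independently of the choice of $F$.

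Having checked that in all three cases the supremum exists in $\mathcal V$ and is given by the stated formula, I would conclude: the map $\underline a \mapsto \sup_{i\in I^*}\nu_i$ is a well-defined map $\mathbb S \to \mathcal V$, and composing with the quotient $\mathcal V \to \mathbb V$ gives a well-defined map $\Psi\colon\mathbb S\to\mathbb V$. The only subtlety worth spelling out is that $\sup_{i\in I^*}\nu_i$ is a priori the supremum taken in $\mathcal V$, and one should note that this is genuinely the least upper bound: in the transcendental and algebraic cases this is the content of Propositions~\ref{lemVstable} and \ref{lemSupAlgType} respectively, and in the maximum case it is trivial. The independence clauses in Propositions~\ref{corTransTypeVal} and \ref{corAlgTypeVal} already guarantee that the formula does not depend on the auxiliary choice of $F$, so $\Psi$ is unambiguous.

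The main obstacle is not really a difficulty but a matter of bookkeeping: one must make sure the three cases of the displayed formula are set up so that they partition $\mathbb S$ correctly (in particular that ``$I^*$ has a maximum'' takes precedence, since such a sequence is neither declared of transcendental nor of algebraic type — those notions were only defined when $I^*$ has no maximum), and that each invoked result applies with $\mathfrak v = \mathfrak v_{\underline a} = \{\nu_i\}_{i\in I^*}$, which requires knowing $\mathfrak v_{\underline a}$ is increasing with no largest element — precisely the computation carried out just before Lemma~\ref{lemaGraumenorFixed} using Lemma~\ref{lemNuaGammaMenorIgual} and the fact that $\{\gamma_i\}$ is strictly increasing. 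Once these hypotheses are in place, the proof is a short assembly of the cited propositions.
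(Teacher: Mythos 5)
Your proposal is correct and follows essentially the same route as the paper, whose proof is just the citation of Lemma \ref{lemNuaGammaMenorIgual} (to see that $\{\nu_i\}_{i\in I^*}$ is an increasing family), Corollary \ref{corTransTypeVal} (transcendental type) and Proposition \ref{corAlgTypeVal} (algebraic type), with the maximum case being trivial. You have merely spelled out the case analysis and bookkeeping that the paper leaves implicit.
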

\begin{proof}
It follows from Lemma \ref{lemNuaGammaMenorIgual}, Corollary \ref{corTransTypeVal} and Proposition \ref{corAlgTypeVal}.
\end{proof}

Write
\[
\mathbb V=\mathbb V_{\rm nt}\sqcup\mathbb V_{\rm vt}\sqcup\mathbb V_{\rm rt}\sqcup \mathbb V_{\rm al}
\]
in the obvious way. 

\begin{Cor}
We have
\[
\Psi(\mathbb S_{\rm rt})\subseteq\mathbb V_{\rm rt},\ \Psi(\mathbb S_{\rm al})\subseteq\mathbb V_{\rm al},\ \Psi(\mathbb S_{\rm vt})\subseteq\mathbb V_{\rm vt}\mbox{ and }\Psi(\mathbb S_{\rm nt})\subseteq\mathbb V_{\rm nt}.
\]
\end{Cor}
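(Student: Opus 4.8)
The statement to prove is the Corollary asserting that $\Psi(\mathbb S_{\rm rt})\subseteq\mathbb V_{\rm rt}$, $\Psi(\mathbb S_{\rm al})\subseteq\mathbb V_{\rm al}$, $\Psi(\mathbb S_{\rm vt})\subseteq\mathbb V_{\rm vt}$ and $\Psi(\mathbb S_{\rm nt})\subseteq\mathbb V_{\rm nt}$. The plan is to handle each of the four cases using the explicit description of $\Psi(\underline a)$ given by the preceding Theorem, together with the classification of valuations into the four types and the results already established about monomial valuations and suprema of increasing families.

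\emph{Case $\mathbb S_{\rm rt}$.} Here $I^*$ has a maximum $i^*_{\max}$, so $\Psi(\underline a)=\nu_{i^*_{\max}}=v_{a_{i^*_{\max}},\gamma_{i^*_{\max}}}$ is a monomial valuation with $\gamma_{i^*_{\max}}\in\Gamma_v$. By Lemma~\ref{lemqnSobrecTransc1} every such monomial valuation is residue-transcendental, so $\Psi(\underline a)\in\mathbb V_{\rm rt}$.

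\emph{Case $\mathbb S_{\rm al}$ (transcendental type).} Here $\Psi(\underline a)=\nu_{\underline a}=\sup_{i\in I^*}\nu_i$ by Corollary~\ref{corTransTypeVal}, and by Remark~\ref{obsFixedStable} every polynomial is $\mathfrak v_{\underline a}$-stable. Hence Proposition~\ref{lemVstable} applies and, by Corollary~\ref{CorVstableAlgebraic}, the valuation $\nu_{\underline a}$ is valuation-algebraic, i.e. $\Psi(\underline a)\in\mathbb V_{\rm al}$.

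\emph{Cases $\mathbb S_{\rm vt}$ and $\mathbb S_{\rm nt}$ (algebraic type).} Here $\Psi(\underline a)=\mu_{F,\gamma}$ with $\gamma=\sup\delta^L$. If $\gamma=\infty$ (the case $\mathbb S_{\rm nt}$), then $\mu_{F,\infty}(F)=\infty$ with $F$ monic irreducible (it has minimal degree among polynomials with value not fixed by $\underline a$, hence is irreducible, since a proper factor would have smaller degree and its value would also be unstable), so $\supp(\mu_{F,\infty})=FK[x]\neq 0$ and $\Psi(\underline a)\in\mathbb V_{\rm nt}$. If $\gamma\neq\infty$ (the case $\mathbb S_{\rm vt}$), then $\gamma=x_\delta$ for a genuine cut $\delta$ in $\Gamma_\Q$, so by the construction of $\Gamma_\Q(\delta)$ in Section~\ref{SecV} the element $\gamma$ satisfies $n\gamma\notin\Gamma_\Q$ for all $n\geq 1$ (it is strictly between $\delta^L$ and $\delta^R$, and no rational multiple can land in $\Gamma_\Q$ since that would force $\gamma\in\Gamma_\Q$). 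Since $\mu_{F,\gamma}(F)=\gamma$ and $F$ is $\mathfrak v_{\underline a}$-unstable, the residue of $F^n$ cannot be absorbed into $\Gamma_v$, so $\Gamma_{\mu_{F,\gamma}}/\Gamma_v$ is not a torsion group; one must also verify $\supp(\mu_{F,\gamma})=0$, which follows because $\mu_{F,\gamma}(g)<\infty$ for every nonzero $g$ (the $F$-expansion coefficients have finite value and $\gamma<\infty$). Thus $\mu_{F,\gamma}$ is value-transcendental and $\Psi(\underline a)\in\mathbb V_{\rm vt}$.

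\textbf{Main obstacle.} The delicate point is the $\mathbb S_{\rm vt}$ case: one must argue carefully that $\Gamma_{\mu_{F,\gamma}}/\Gamma_v$ is genuinely non-torsion rather than merely that $\gamma$ has infinite order modulo $\Gamma_\Q$. The cleanest route is to invoke the fact (from Section~\ref{SecV}, via the isomorphism $\Gamma_{\rm sme}\to{\rm Qcuts}(\Gamma_\Q)$ and the order structure on $\Gamma_\Q(\delta)$) that $\langle\Gamma_v,\gamma\rangle$ is the direct sum $\gamma\Z\oplus\Gamma_\Q$ as an ordered group when $\delta$ is a proper cut, so no nonzero multiple of $\gamma$ lies in $\Gamma_\Q\supseteq\Gamma_v$; then since $\Gamma_{\mu_{F,\gamma}}\supseteq\langle\Gamma_v,\gamma\rangle$, the quotient $\Gamma_{\mu_{F,\gamma}}/\Gamma_v$ contains the non-torsion element $\gamma+\Gamma_v$. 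Together with $\supp(\mu_{F,\gamma})=0$ this pins down the value-transcendental type, and the remaining cases are immediate from the cited results.
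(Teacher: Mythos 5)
Your proof is correct and follows essentially the same route as the paper: the $\mathbb S_{\rm rt}$ case via Lemma \ref{lemqnSobrecTransc1}, the $\mathbb S_{\rm al}$ case via Corollary \ref{CorVstableAlgebraic}, the $\mathbb S_{\rm nt}$ case from $F\in\supp(\mu_{F,\infty})$, and the $\mathbb S_{\rm vt}$ case from $\mu_{F,\gamma}(F)=\gamma$ with $\gamma\in\Lambda\setminus\Gamma_\Q$. The paper's proof is simply terser, leaving implicit the details you spell out (trivial support and the fact that no multiple of $\gamma$ lies in $\Gamma_\Q$, which also follows directly from divisibility of $\Gamma_\Q$ and torsion-freeness of $\Lambda$).
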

\begin{proof}
If $\underline{a}\in\mathbb S_{\rm rt}$, then
\[
\Psi(\underline{a})=\left[\nu_{i^*_{\max}}\right]=\left[v_{a_{i^*_{\max}},\gamma_{i^*_{\max}}}\right]\in \mathbb V_{\rm rt}
\]
by Lemma \ref{lemqnSobrecTransc1}.

If $\underline{a}\in\mathbb S_{\rm al}$, then
\[
\Psi(\underline{a})=\left[\nu_{\underline{a}}\right]\in \mathbb V_{\rm al}
\]
by Proposition \ref{CorVstableAlgebraic}.

Assume now that $\underline a$ is algebraic and take $F$ and $\gamma$ as above. If $\underline{a}\in \mathbb S_{\rm vt}$, then $\gamma\in \Lambda\setminus \Gamma_\Q$ and since
\[
\mu_{F,\gamma}(F)=\gamma
\]
we deduce that $\Psi(\underline a)\in \mathbb V_{\rm vt}$. Finally, if $\gamma=\infty$, then $F\in \SU(\mu_{Q,\gamma})$ and consequently $\Psi(\underline{a})\in \mathbb V_{\rm nt}$.
\end{proof}
We present now an example of a valued field for which all these subsets of $\mathbb S$ are non-empty.

\begin{Exa}
Consider the field $K=\F_p(t)^{\frac{1}{p^\infty}}$, which is the perfect hull of $\F_p(t)$ inside $\F_p\left(\left(t^\Q\right)\right)$. Let $v$ be the $t$-adic valuation on $K$. If we consider the set $I=\{1,2\}$ and
\[
a_1=t\mbox{ and }a_2=t+t^2
\]
then $\underline a=\{a_i\}_{i\in I}\in \mathbb S_{\rm rt}$ and $\Psi(\underline{a})=v_{t,2}$. Take now an element $\eta\in \F_p((t))$ transcendental over $K$. Write
\[
\eta=\sum_{j=0}^\infty b_jt^j\mbox{ with }b_j\in \F_p\mbox{ for every }j\in\N.
\]
Set $I=\N$ and for each $i\in I$ set
\[
a_i:=\sum_{j=0}^ib_jt^j\in K.
\]
Then $\underline{a}=\{a_i\}_{i\in I}$ is a pseudo-Cauchy of transcendental type, and hence $\underline{a}\in \mathbb S_{\rm al}$.

Take now
\[
\eta=\sum_{j=0}^\infty t^{p^j}=t+t^p+t^{p^2}+\ldots\in \F_p((t))\setminus K.
\]
Then $\eta$ is a root of $F=x^p-x+t\in K[x]$. If we consider the sequence
\[
a_i=\sum_{j=0}^i t^{p^j}\in K,
\]
then $\underline{a}=\{a_i\}_{i\in I}$ is a pseudo-Cauchy sequence of algebraic type over $(K,v)$ and $F$ is a monic polynomial of smallest degree not fixed by $\underline a$. Moreover, in the previous notation we have $\gamma=\infty$ and so $\underline a\in \mathbb S_{\rm nt}$.

Finally, if we take
\[
\eta=\sum_{j=0}^\infty t^{-\frac{1}{p^j}}=t^{-1}+t^{-\frac{1}{p}}+t^{-\frac{1}{p^2}}+\ldots\in \F_p\left(\left(t^\Q\right)\right)\setminus K.
\]
Then $\eta$ is a root of $F=x^p-x-t^{-p}\in K[x]$. If we consider the sequence
\[
a_i=\sum_{j=0}^i t^{-\frac{1}{p^j}}\in K,
\]
then $\underline{a}=\{a_i\}_{i\in I}$ is an algebraic pseudo-Cauchy sequence over $(K,v)$ and $F$ is a monic polynomial of smallest degree not fixed by $\underline a$. Moreover, $\gamma=0^-$ and consequently $\underline a\in \mathbb S_{\rm vt}$.
 
\end{Exa}
\subsection{The case $K=\overline K$}

If the field $K$ is algebraically closed, then the situation becomes much simpler. The next result is well-known, but we present its proof here for sake of completeness. Also, it illustrates the elements in the sets $\mathbb S_{\rm nt}$, $\mathbb S_{\rm rt}$, $\mathbb S_{\rm vt}$ and $\mathbb S_{\rm al}$.

\begin{Prop}\label{pseudcaucqysufec}
If $K$ is algebraically closed, then $\Psi$ is surjective.
\end{Prop}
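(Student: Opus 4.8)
The plan is to take an arbitrary class $[\mu]\in\mathbb V$, represented by a valuation $\mu\in\mathcal V$ extending $v$, and construct a pseudo-Cauchy sequence $\underline a$ in $K$ with $\Psi(\underline a)=[\mu]$. The construction will proceed by cases according to the type of $\mu$, using the order structure on $\mathcal V$ from Proposition~\ref{treescufutue} together with the classification of $\mu$ as non-trivial support, value-transcendental, residue-transcendental or valuation-algebraic. First I would handle the residue-transcendental case: by Lemma~\ref{lemqnSobrecTransc1} and the classification of residue-transcendental valuations on $K[x]$ (which over $K=\overline K$ are all equivalent to monomial valuations $v_{a,\gamma}$ with $a\in K$, $\gamma\in\Gamma_v$), one picks such a representative and then, using Lemma~\ref{lemasobrebola}~\textbf{(ii)}, produces a two-element sequence $\{a,a'\}$ with $v(a'-a)=\gamma$ so that $\Psi(\{a,a'\})=[v_{a,\gamma}]$; here $I^*=\{1\}$ has a maximum, so $\underline a\in\mathbb S_{\rm rt}$.

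For the remaining three cases the idea is the same: $\mu$ is a genuine limit of an increasing chain of monomial valuations. Concretely, I would consider the set $\left]-\infty,\mu\right[$ of valuations below $\mu$ in $\mathcal V$; since $K=\overline K$, the valuations $v_{a,\gamma}$ with $v_{a,\gamma}\le\mu$ form a cofinal increasing family inside this totally ordered set (this is where algebraic closedness is essential — every key polynomial can be taken linear, so every proper truncation of $\mu$ is a monomial valuation). By Lemma~\ref{lemNuaGammaMenorIgual}, $v_{a,\gamma}\le v_{a',\gamma'}$ is controlled by nested balls, so this cofinal family corresponds to a decreasing chain of balls $B(a,\gamma)$; choosing centers $a_i$ along a well-ordered cofinal subchain and using Lemma~\ref{lemasobrebola} to arrange $v(a_i-a_j)=\gamma_i$ strictly increasing gives a pseudo-Cauchy sequence $\underline a=\{a_i\}_{i\in I}$ whose associated family $\mathfrak v_{\underline a}=\{\nu_i\}$ is cofinal (in both directions) with the chosen family below $\mu$. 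Then by Lemma~\ref{lemCofinalEachOther} and the definition of $\Psi$ we get $\Psi(\underline a)=[\sup_i\nu_i]=[\mu]$. Tracking the value $\gamma=\sup\delta^L$ attached to a minimal-degree unfixed polynomial shows that the valuation-algebraic case gives $\gamma=\infty$ only when $\mu$ has non-trivial support, value-transcendental $\mu$ gives $\gamma\in\Lambda\setminus\Gamma_\Q$, and the residue field grows transcendentally exactly when the chain stabilizes at a monomial valuation — so $\underline a$ lands in $\mathbb S_{\rm al}$, $\mathbb S_{\rm vt}$ or $\mathbb S_{\rm nt}$ as appropriate.

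The main obstacle I expect is showing that the truncations of $\mu$ below the critical level really are monomial valuations $v_{a,\gamma}$ with center in $K$, i.e. that over an algebraically closed field one never needs key polynomials of degree $>1$. This is the place where one invokes (or reproves) the classical fact that every valuation on $\overline K[x]$ extending $v$ is a limit of monomial valuations centered at points of $\overline K=K$; concretely, for $\mu$ not residue-transcendental one must argue that for each $\gamma$ in the relevant cut there is $a\in K$ with $\mu(x-a)\ge\gamma$, which amounts to solving, approximately, $x-a\equiv 0$ in the residue structure — and this uses that residue extensions are trivial/algebraic combined with algebraic closedness to clear them. Once that cofinal family of monomial valuations is in hand, the passage to a pseudo-Cauchy sequence and the identification of $\Psi(\underline a)$ are routine applications of the lemmas already proved, in particular Lemmas~\ref{lemNuaGammaMenorIgual}, \ref{lemasobrebola} and \ref{lemCofinalEachOther} and Proposition~\ref{lemSupAlgType}.
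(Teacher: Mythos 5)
Your plan is, underneath the packaging, the same argument as the paper's, and it would work to the same extent; but the paper reaches the conclusion more directly and your order-theoretic detour introduces claims you neither need nor establish. The paper never looks at the poset $\left]-\infty,\mu\right[$: it examines the set ${\rm d}(x,K)=\{\mu(x-c)\mid c\in K\}$, splits according to whether this set has a maximum and whether that maximum lies in $\Gamma$, writes down the pseudo-Cauchy sequence explicitly in each case (a two-element sequence when the maximum is $\gamma=v(a)\in\Gamma$; the sequence $a_i=c+b_i$ with $v(b_i)$ running through a cofinal subset of $\Gamma_{<\gamma}$ when the maximum $\gamma\notin\Gamma$, which covers both the value-transcendental and the non-trivial support cases; centers $a_i$ with $\mu(x-a_i)$ cofinal in ${\rm d}(x,K)$ when there is no maximum), and then verifies $\Psi(\underline a)=[\mu]$ by factoring polynomials into linear factors, which is where $K=\overline K$ enters. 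Your claim that the monomial valuations below $\mu$ are cofinal in $\left]-\infty,\mu\right[$ is stronger than what is needed and is not argued; all that matters is that the particular increasing chain you select has supremum equivalent to $\mu$, and that is exactly the content the paper supplies through the linear-factor computation. Likewise your flagged ``main obstacle'' dissolves into two trivial remarks once one works with ${\rm d}(x,K)$: over $\overline K$ a support generator is linear, and ${\rm d}(x,K)$ consists by definition of values $\mu(x-a)$, so no residue-structure argument is involved; and your last sentence about tracking $\gamma=\sup\delta^L$ conflates the valuation-algebraic and non-trivial support cases, though identifying the subset of $\mathbb S$ is not needed for surjectivity. Passing from the chain of balls to a pseudo-Cauchy sequence via Lemma~\ref{lemasobrebola}, Lemma~\ref{lemNuaGammaMenorIgual} and Lemma~\ref{lemCofinalEachOther} is fine and matches the paper.

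One caveat, which your sketch shares with the paper's own proof: both implicitly assume that when $\gamma=\mu(x-c)\notin\Gamma$ the segment $\Gamma_{<\gamma}$ has no maximum. If it has a maximum $m$ (i.e. $\gamma$ realizes the cut whose left set is $\Gamma_{\leq m}$), then by Lemma~\ref{lemNuaGammaMenorIgual} every monomial valuation $v_{a,\rho}$ with $\rho\in\Gamma_v$ lying below $\mu$ satisfies $v_{a,\rho}\leq v_{c,m}$, so any chain of such valuations has supremum at most the residue-transcendental valuation $v_{c,m}$, never $\mu$; in particular your assertion that the monomial valuations below $\mu$ have supremum $\mu$ fails there, exactly as the paper's cofinal-subset construction does. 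Be aware that this boundary case is not covered by the argument as written, yours or the paper's.
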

\begin{proof}
Take any valuation $\nu$ on $K[x]$ extending $v$. Consider the set
\[
{\rm d}(x,K):=\{\nu(x-c)\mid c\in K\}.
\]
If ${\rm d}(x,K)$ has a largest element $\gamma=\nu(x-c)$, then $\nu=v_{c,\gamma}$. Indeed, since $K$ is algebraically closed, it is enough to show that
\[
\nu(x-b)=v_{c,\gamma}(x-b)\mbox{ for every }b\in K.
\]
If $\nu(x-b)<\nu(x-c)$, then $\nu(x-b)=v(b-c)$ and consequently
\[
v_{c,\gamma}(x-b)=\min\{\gamma,v(b-c)\}=v(b-c)=\nu(x-b).
\]
If $\nu(x-b)=\nu(x-c)$, then $\nu(x-b)\leq v(b-c)$ and consequently
\[
v_{c,\gamma}(x-b)=\min\{\gamma,v(b-c)\}=\gamma=\nu(x-b).
\]
Suppose that $\gamma\in \Gamma$, say $\gamma=v(a)$. In this case, take $I=\{1,2\}$ and
\[
a_1=c\mbox{ and }a_2=a+c.
\]
Clearly, $\underline a=\{a_i\}_{i\in I}$ is a pseudo-Cauchy sequence. It follows from the definition that
\[
\underline a\in \mathbb S_{\rm rt}\mbox{ and }[\nu]=\Psi(\underline a).
\]

If $\gamma\notin \Gamma$, then it induces a cut $\delta=\left(\Gamma_{<\gamma},\Gamma_{>\gamma}\right)$ on $\Gamma_\Q=\Gamma$. Take a cofinal set $\{\gamma_i\}_{i\in I}$ on $\delta^L$ and for each $i\in I$ an element $b_i\in K$ such that $v(b_i)=\gamma_i$. For each $i\in I$ set $a_i:=b_i+c$. Then $\underline{a}=\{a_i\}_{i\in I}$ is a pseudo-Cauchy sequence and for $f=x-c$ we have
\[
v(f(a_i))=v(a_i-c)=v(b_i)=\gamma_i.
\]
Consequently, $\underline{a}$ is of algebraic type and $x-c$ is a monic polynomial of smallest degree not fixed by $\underline{a}$. Moreover, the cut induced by $\{v(f(a_i))\}_{i\in I}$ is $\delta$. 
If $\gamma=\sup \delta^L=\infty$, then
\[
\underline a\in \mathbb S_{\rm nt}\mbox{ and }[\nu]=\Psi(\underline a).
\]
If $\gamma<\infty$, then
\[
\underline a\in \mathbb S_{\rm vt}\mbox{ and }[\nu]=\Psi(\underline a).
\]

Suppose now that ${\rm d}(x,K)$ does not have a maximum. Take a well-ordered cofinal subset $\{\gamma_i\}_{i\in I}$ of ${\rm d}(x,K)$ and for each $i\in I$ an element $a_i\in K$  such that $\gamma_i=\nu(x-a_i)$. It is easy to see that $\{a_i\}_{i\in I}$ is a pseudo-Cauchy sequence on $K$. For every $b\in K$ there exists $i\in I$ such that
\[
\nu(x-b)<\nu(x-a_j)\mbox{ for every }j\in I, i<j.
\]
Then, for $f=x-b$ we have
\begin{equation}\label{rtrnasnceiqeu}
v(f(a_j))=v(b-a_j)=\nu(x-a_j-(x-b))=\nu(x-b)\mbox{ for every }j\in I, i<j.
\end{equation}
Since $K$ is algebraically closed, this shows that the value of every polynomial in $K[x]$ is fixed by $\underline a$, and so $\underline a$ is of transcendental type. This and \eqref{rtrnasnceiqeu} imply that
\[
\underline{a}\in \mathbb S_{\rm al}\mbox{ and }[\nu]=\Psi(\underline a).
\]
\end{proof}
\section{Approximation types}\label{approcxsec}

Let $\mathfrak{B}=\mathfrak{B}(K,v)$ be the set of all closed balls in $K$ with radii in $\Gamma=\Gamma_v$. A \textbf{nest of balls} $\{B_i\}_{i\in I}$ is a family of elements $B_i\in \mathfrak{B}$ such that $I$ is a totally ordered set of indices satisfying
\[
i<j \Llr B_i\supset B_j.
\]
For a nest of balls $\mathcal{B}=\{B_i\}_{i\in I}$,  let
$$\overline{\mathcal{B}}:= \{ B\in \mathfrak{B}\mid B_i\subseteq B \text{ for some } i\in I  \}. $$

\begin{Def}
	An \textbf{approximation type} over $(K,v)$ is a nest of balls $\Appr$ such that $\boldsymbol{ \rm A}=\overline{\boldsymbol{\rm A}}$.
\end{Def}

Let $\mathbb{A}$ be the set of all approximation types over $(K,v)$. 

\begin{Lema}\cite[Lemma 5.1]{NartJosnei}\label{lemUnicoAppr}
For every nest of balls $\mathcal{B}$, there exists a uniquely determined approximation type $\Appr$ such that $\Appr$ and $\mathcal{B}$ are cofinal in each other. 
\end{Lema}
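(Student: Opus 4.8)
The plan is to construct $\Appr$ explicitly from $\mathcal B$ by "completing" the nest under supersets, and then verify the three required properties: that $\Appr$ is a nest of balls, that $\Appr=\overline{\Appr}$, and that $\Appr$ and $\mathcal B$ are cofinal in each other, before finally arguing uniqueness. So first I would define
\[
\Appr:=\overline{\mathcal B}=\{B\in\mathfrak B\mid B_i\subseteq B\text{ for some }i\in I\},
\]
using the notation already introduced in the excerpt. The natural index set for $\Appr$ is $\Appr$ itself, ordered by reverse inclusion; one must check this is a total order, which is where the properties of balls come in: for $B,B'\in\Appr$ there are $i,j\in I$ with $B_i\subseteq B$ and $B_j\subseteq B'$, and taking $k\geq i,j$ we get $B_k\subseteq B\cap B'$, so $B\cap B'\neq\emptyset$; two balls with nonempty intersection are comparable (one contains the other) by Lemma~\ref{lemasobrebola}(i) together with Lemma~\ref{lemasobrebola}(iii). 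Hence $\Appr$ is totally ordered by inclusion and is a nest of balls.

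Next I would check $\Appr=\overline{\Appr}$. The inclusion $\Appr\subseteq\overline{\Appr}$ is immediate since every $B\in\Appr$ satisfies $B\subseteq B$. For the reverse, if $B\in\overline{\Appr}$ then there is $B'\in\Appr$ with $B'\subseteq B$, and by definition of $\Appr=\overline{\mathcal B}$ there is $i\in I$ with $B_i\subseteq B'\subseteq B$, so $B\in\overline{\mathcal B}=\Appr$. Thus $\Appr$ is an approximation type. Cofinality of $\mathcal B$ in $\Appr$ is trivial: every $B_i$ lies in $\Appr$. Cofinality of $\Appr$ in $\mathcal B$ means that for every $B\in\Appr$ there is some $B_i$ with $B_i\subseteq B$ (so that $\mathcal B$ reaches at least as "deep" as any element of $\Appr$); this is exactly the defining property of $\overline{\mathcal B}$. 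One should also note the directedness/decreasing structure is respected: since $\mathcal B$ has no maximum-forcing issues, $\Appr$ and $\mathcal B$ genuinely co-determine the same "tail".

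For uniqueness, suppose $\Appr'$ is another approximation type cofinal in each other with $\mathcal B$. I would show $\Appr'=\overline{\mathcal B}$. If $B\in\Appr'$, then by cofinality of $\mathcal B$ in $\Appr'$ there is $B_i\subseteq B$, so $B\in\overline{\mathcal B}=\Appr$. Conversely if $B\in\Appr=\overline{\mathcal B}$, pick $B_i\subseteq B$; by cofinality of $\Appr'$ in $\mathcal B$ there is $B'\in\Appr'$ with $B'\subseteq B_i\subseteq B$, hence $B\in\overline{\Appr'}=\Appr'$. Therefore $\Appr'=\Appr$. The main obstacle, modest as it is, is the verification that $\overline{\mathcal B}$ is \emph{totally} ordered rather than merely a set of balls: this rests on the key fact that any two balls with a common point are nested, and that the common tail of $\mathcal B$ forces any two members of $\overline{\mathcal B}$ to share a point. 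Everything else is bookkeeping with the definition of $\overline{(\cdot)}$.
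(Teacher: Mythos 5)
Your proposal is correct and follows the same route as the paper, which simply observes that $\overline{\mathcal B}$ is the desired approximation type; you have filled in the routine verifications (total ordering via the fact that two balls sharing a point are nested, $\overline{\overline{\mathcal B}}=\overline{\mathcal B}$, mutual cofinality, and uniqueness) exactly as intended.
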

\begin{proof}
It is easy to show that $\overline{\mathcal B}$ is this approximation type.
\end{proof}
The \textbf{support} of $\Appr$ is the set
$$\supp(\Appr):=\{\gamma\in \Gamma_v\mid \Appr \text{ contains a closed ball of radius } \gamma\}. $$
This is an initial segment of $\Gamma_v$. By \cite[Lemma 2.16]{KuhlmannApprTypes}, for each $\gamma \in \supp(\Appr)$ there exists a unique ball in $\Appr$ of radius $\gamma$ which we denoted by  $\Appr_\gamma$.

Fix an extension $\overline v$ of $v$ to $\overline K$. For each $\gamma\in \Gamma$ and $a\in K$ we denote by $B_{\overline K}(a,\gamma)$ the corresponding ball in $\overline K$, i.e.,
\[
B_{\overline K}(a,\gamma)=\{b\in \overline K\mid \overline v(b-a)\geq \gamma\}.
\] 

Take a pseudo-Cauchy sequence $\underline{a}=\{a_i\}_{i\in I}$. We associate to $\underline{a}$ the nest of balls $\mathcal B=\{B(a_i, \gamma_i)\}_{i\in I^*}$ and the approximation type
	$$\iota(\underline a) := \overline{\mathcal B}.$$
It is easy to see that
\[
\SU(\iota(\underline a))=\bigcup_{i\in I^*}\Gamma_{\leq \gamma_i}.
\]
Denote
\[
\tilde{\underline{a}}:=\bigcap_{i\in I^*}B_{\overline K}(a_i,\gamma_i).
\] 
\begin{Prop}
The map
\[
\iota:\mathbb{S}\lra \mathbb{A},\ \underline{a}\mapsto\iota(\underline{a})
\]
is surjective. Moreover we have the following.
\begin{description}
\item[(i)] $\underline a\in \mathbb S_{\rm rt}$ if and only if $\SU(\iota(\underline{a}))$ admits a maximum.

\item[(ii)] $\underline a\in \mathbb S_{\rm vt}$ if and only if $\SU(\iota(\underline{a}))$ does not admit a maximum,
\[
\tilde{\underline{a}}\neq \emptyset\mbox{ and }\SU(\iota(\underline{a}))\neq \Gamma.
\]

\item[(iii)] $\underline a\in \mathbb S_{\rm nt}$ if and only if $\SU(\iota(\underline{a}))$ does not admit a maximum,
\[
\tilde{\underline{a}}\neq \emptyset\mbox{ and }\SU(\iota(\underline{a}))= \Gamma.
\] 

\item[(iv)] $\underline a\in \mathbb S_{\rm al}$ if and only if $\tilde{\underline{a}}=\emptyset$.
\end{description}
\end{Prop}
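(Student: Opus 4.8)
The plan is to prove surjectivity first, then establish the four equivalences by unwinding the definitions of $\mathbb S_{\rm rt},\mathbb S_{\rm vt},\mathbb S_{\rm nt},\mathbb S_{\rm al}$ and relating them to properties of $\iota(\underline a)=\overline{\mathcal B}$. For surjectivity, I would take an arbitrary approximation type $\Appr\in\mathbb A$. Its support $\SU(\Appr)$ is an initial segment of $\Gamma$; choose a well-ordered cofinal subset $\{\gamma_i\}_{i\in I^*}$ of $\SU(\Appr)$ (adding a maximum index if $\SU(\Appr)$ has a largest element, so that the resulting index set $I$ has the right shape), and for each $i$ let $a_i\in K$ be the center of the unique ball $\Appr_{\gamma_i}$ of radius $\gamma_i$. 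Because the balls are nested ($\Appr_{\gamma_i}\supseteq \Appr_{\gamma_j}$ for $i<j$), Lemma~\ref{lemasobrebola}\textbf{(i)} gives $B(a_i,\gamma_i)=\Appr_{\gamma_i}$ and $v(a_j-a_i)\geq\gamma_i$; combined with $v(a_j-a_i)<\gamma_j$ (which follows since $a_j$ is a center of radius $\gamma_j$ ball strictly inside the radius $\gamma_i$ ball, using Lemma~\ref{lemasobrebola}\textbf{(iv)} to see the inclusion is strict when $\gamma_j>\gamma_i$) one checks the pseudo-Cauchy inequality $v(a_i-a_j)<v(a_j-a_k)$ for $i<j<k$. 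Then $\iota(\underline a)=\overline{\{B(a_i,\gamma_i)\}}=\overline{\{\Appr_{\gamma_i}\}}$, and since $\{\Appr_{\gamma_i}\}$ is cofinal in $\Appr$, Lemma~\ref{lemUnicoAppr} forces $\iota(\underline a)=\Appr$.

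For the equivalences, the organizing observation is that $\SU(\iota(\underline a))=\bigcup_{i\in I^*}\Gamma_{\leq\gamma_i}$ (already noted in the excerpt), so $\SU(\iota(\underline a))$ admits a maximum if and only if the increasing sequence $\{\gamma_i\}_{i\in I^*}$ admits a maximum, i.e. if and only if $I^*$ has a maximum — which is exactly the definition of $\mathbb S_{\rm rt}$. That disposes of \textbf{(i)}. For the remaining three cases we are in the situation $I^*$ has no maximum, so $\underline a$ is either of transcendental type (then $\underline a\in\mathbb S_{\rm al}$) or of algebraic type, and in the algebraic case we split according to whether $\gamma=\sup\delta^L$ equals $\infty$ ($\mathbb S_{\rm nt}$) or not ($\mathbb S_{\rm vt}$), where $\delta^L=\{\alpha\mid\alpha\leq\nu_i(F)\text{ for some }i\in I^*\}$ for $F$ a monic polynomial of smallest degree not fixed by $\underline a$. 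So the heart of the matter is \textbf{(iv)}: $\underline a$ is of transcendental type if and only if $\tilde{\underline a}=\bigcap_{i\in I^*}B_{\overline K}(a_i,\gamma_i)=\emptyset$. The plan for \textbf{(iv)} is to use Remark~\ref{obsFixedStable} (transcendental type $\Leftrightarrow$ all polynomials $\mathfrak v_{\underline a}$-stable $\Leftrightarrow$ no polynomial has value not fixed by $\underline a$): if some $b\in\overline K$ lies in $\tilde{\underline a}$, take its minimal polynomial $F$ over $K$ and show $v(F(a_i))$ is strictly increasing (factor $F$ over $\overline K$, and for the conjugate roots $b'$ note $\overline v(a_i-b')$ is eventually constant while $\overline v(a_i-b)=\gamma_i$ is strictly increasing — this is the standard computation behind algebraic type), so $F$ is not fixed and $\underline a$ is of algebraic type; conversely if $\underline a$ is of algebraic type with $F$ of minimal degree not fixed, one produces a root $b\in\overline K$ of $F$ with $\overline v(a_i-b)\geq\gamma_i$ for all $i$ (again by a pigeonhole/Lemma~\ref{lemasuperutil} argument on the $x-a_i$-expansion of $F$ showing the term driving $v(F(a_i))$ down is the linear one, forcing some root to be close), so $b\in\tilde{\underline a}$.

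With \textbf{(iv)} in hand, \textbf{(ii)} and \textbf{(iii)} are essentially bookkeeping: in both we assume $I^*$ has no maximum and $\tilde{\underline a}\neq\emptyset$, which by \textbf{(iv)} means $\underline a$ is of algebraic type, so $\underline a\in\mathbb S_{\rm vt}\sqcup\mathbb S_{\rm nt}$; the only thing to check is that the dichotomy $\gamma=\infty$ versus $\gamma<\infty$ matches $\SU(\iota(\underline a))=\Gamma$ versus $\SU(\iota(\underline a))\neq\Gamma$. The key point here is that $\nu_i(F)$, for $F$ of minimal degree not fixed, is related to the $\gamma_i$: by Lemma~\ref{lemaGraumenorFixed}, $\nu_i(F)=v(F(a_i))$ for large $i$, and since $F$ is not fixed this is the increasing sequence controlling $\delta^L$; one argues that $\sup\delta^L=\infty$ (i.e. $v(F(a_i))\to\infty$, meaning $\delta^L=\Gamma_\Q$) precisely when the radii $\gamma_i$ are cofinal in $\Gamma$, i.e. when $\SU(\iota(\underline a))=\bigcup\Gamma_{\leq\gamma_i}=\Gamma$. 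The direction $\SU(\iota(\underline a))=\Gamma\Rightarrow\gamma=\infty$ uses that if the $\gamma_i$ are cofinal in $\Gamma$ and $b\in\tilde{\underline a}$ is a root of $F$, then $\overline v(F(a_i))\geq\overline v(a_i-b)=\gamma_i$ is cofinal in $\Gamma$; the reverse direction is similar. I expect the main obstacle to be \textbf{(iv)}, specifically the clean extraction of a root $b\in\tilde{\underline a}$ from an algebraic-type sequence — this requires invoking Lemma~\ref{lemasuperutil} on the $(x-a_i)$-expansion of $F$ to identify that the slope-controlling term is linear and then a compactness/nestedness argument over $\overline K$ to locate the root inside all the balls $B_{\overline K}(a_i,\gamma_i)$; everything else reduces to the definitions, Lemma~\ref{lemasobrebola}, and Lemma~\ref{lemUnicoAppr}.
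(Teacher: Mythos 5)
Your surjectivity argument has a genuine gap. A closed ball in $(K,v)$ has no distinguished centre: by Lemma \ref{lemasobrebola} \textbf{(i)} every element of $\Appr_{\gamma_i}$ is a centre, so ``let $a_i$ be the centre of $\Appr_{\gamma_i}$'' is an arbitrary choice, and for an arbitrary choice the resulting family need not be pseudo-Cauchy. The step that fails is your claim that $v(a_j-a_i)<\gamma_j$ follows from the strict inclusion $\Appr_{\gamma_j}\subsetneq\Appr_{\gamma_i}$: Lemma \ref{lemasobrebola} \textbf{(iv)} only produces \emph{some} point of the larger ball outside the smaller one, and says nothing about where the chosen centres sit. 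Concretely, if some $c\in K$ lies in every ball of $\Appr$ (e.g.\ $\Appr=\{B(c,\gamma)\mid \gamma\in\Gamma\}$), then $c$ is a legitimate centre of every $\Appr_{\gamma_i}$, and the choice $a_i=c$ for all $i$ gives a constant family with $v(a_i-a_j)=\infty$, so the pseudo-Cauchy inequality fails. The paper's proof avoids exactly this by choosing $a_i\in\Appr_{\gamma_i}\setminus\Appr_{\gamma_{i+1}}$ (nonempty by Lemma \ref{lemasobrebola} \textbf{(iv)}); then $a_i\notin B(a_j,\gamma_j)$ for $j>i$ yields the missing bound $v(a_i-a_j)<\gamma_j\leq v(a_j-a_k)$. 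The case where $\SU(\Appr)$ has a maximum $\gamma$ also needs its own construction (the paper takes $I=\{1,2\}$, $a_1=a$, $a_2=a+b$ with $v(b)=\gamma$), since one must realize the top radius exactly as the value of a difference of consecutive terms; ``adding a maximum index'' to a family of centres does not achieve this.

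For the equivalences you follow essentially the paper's route (factor the relevant polynomial over $\overline K$ and use the increasing-or-ultimately-constant dichotomy), but two intermediate claims need repair. First, in the forward direction of \textbf{(iv)}, the conjugate roots $b'$ of $b\in\tilde{\underline a}$ need not have $\overline v(a_i-b')$ eventually constant (other conjugates may also lie in $\tilde{\underline a}$); the conclusion survives because each such term is increasing or ultimately constant and the term for $b$ is strictly increasing. Second, and more seriously, in \textbf{(ii)}/\textbf{(iii)} the inequality $v(F(a_i))\geq\overline v(a_i-b)=\gamma_i$ is false in general, because the remaining roots can contribute \emph{negative} values (see the paper's example $F=x^p-x-t^{-p}$, where all root distances are negative). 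What is true, and what the argument needs, is that ultimately $v(F(a_i))=N\gamma_i+C$, where $N\geq 1$ is the number of roots of $F$ lying in $\tilde{\underline a}$ (each contributes exactly $\gamma_i$, since $\overline v(a_i-d)\geq\gamma_i$ and $\overline v(a_{i+1}-d)>\gamma_i$ force equality) and $C$ is the eventually constant contribution of the other roots. From this identity both directions of the dichotomy ``$\gamma=\infty$ iff $\SU(\iota(\underline a))=\Gamma$'' follow at once, and the same identity, together with the observation that a monic unfixed polynomial of minimal degree is irreducible (lower-degree factors are fixed), produces the root of $F$ in $\tilde{\underline a}$ that you propose to extract via Lemma \ref{lemasuperutil} and a ``compactness'' argument; the factorization argument is both cleaner and the one the paper uses. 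Part \textbf{(i)} and the partition bookkeeping are fine.
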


\begin{proof}
Take an approximation type $\Appr\in \mathbb A$. Suppose first that $\SU(\Appr)$ has a maximum $\gamma$. Then $\Appr=\overline{ \Appr_\gamma}$ and $\Appr_\gamma=B(a,\gamma)$ for some $a\in K$.  Take $b\in K$ such that $v(b)=\gamma$. Setting $I=\{1,2\}$, $a_1=a$ and $a_2=a_1+b$ we have
\[
\underline a=\{a_i\}_{i\in I}\in \mathbb S_{\rm rt}\mbox{ and }\iota(\underline a)=\Appr.
\]
Moreover, it is clear that if $\underline a\in \mathbb S_{\rm rt}$, then $\SU(\iota(\underline a))$ has a maximum.

Suppose now that $\SU(\Appr)$ does not have a maximum. Take a cofinal sequence $\{\gamma_i\}_{i\in I}$ in $\SU(\Appr)$ and denote $\Appr_i:=\Appr_{\gamma_i}$. Since $\SU(\Appr)$ does not have a maximum, we can assume that $\Appr_i\supsetneq \Appr_{i+1}$. For each $i\in I$ choose an element $a_i\in \Appr_i\setminus \Appr_{i+1}$. We claim that $\{a_i\}_{i\in I}$ is a pseudo-Cauchy sequence on $K$. Indeed, for $i,j,k\in I$ with $i<j<k$, by our choice, we have
\[
a_i\in \Appr_i\setminus \Appr_j\mbox{ and }a_k\in \Appr_j. 
\]
Since $a_i\in \Appr_i$ and $a_j\in \Appr_j$, we have
\[
\Appr_i=B(a_i,\gamma_i)\mbox{ and }\Appr_j=B(a_j,\gamma_j).
\]
The fact that $a_i\notin \Appr_j$ and $a_k\in \Appr_j$ imply that
\[
v(a_i-a_j)<\gamma_j\leq v(a_k-a_j),
\]
which shows that $\underline{a}$ is a pseudo-Cauchy sequence. Moreover, it is easy to see that $\Appr=\iota(\underline a)$ and hence $\iota$ is surjective.

It remains to show the items \textbf{(ii)}, \textbf{(iii)} and \textbf{(iv)}. Assume that $\SU(\iota(\underline{a}))$ does not admit a maximum and
\begin{equation}\label{eqasngdslc}
B':=\bigcap_{i\in I^*}B_{\overline K}(a_i,\gamma_i)\neq \emptyset.
\end{equation}
Take $c\in B'$ and let $F$ be the minimal polynomial of $c$ over $K$. Let $c=c_0,c_1,\ldots,c_s\in \overline K$ be the roots of $F$. Write
\[
F=(x-c) \prod_{l=1}^s(x-c_l).
\]
For each $l$, $1\leq l\leq s$, the set $\{a_i-c_l\}_{i\in I}$ is a pseudo-Cauchy sequence, so its value is increasing or ultimately constant. Since $c\in B'$, we deduce that 
\[
v(a_i-c)=v(a_i-a_{i+1}+a_{i+1}-c)=\min\{\gamma_i,\gamma_{i+1}\}=\gamma_i
\]
and consequently $\{\overline v(a_i-c)\}_{i\in I}$ is increasing. Since
\[
v(F(a_i))=\overline v\left((a_i-c) \prod_{l=1}^s(a_i-c_l)\right)=\overline{v}(a_i-c)+\sum_{l=1}^s\overline{v}(a_i-c_l),
\]
we deduce that $\{v(F(a_i))\}_{i\in I}$ is ultimately increasing. This shows that $\underline{a}$ is of algebraic type. In this case, by definition 
\[
\underline{a}\in \mathbb S_{\rm vt}\Llr\SU(\iota(\underline a))\neq \Gamma,
\]
and this is same as saying that
\[
\underline{a}\in \mathbb S_{\rm nt}\Llr\SU(\iota(\underline a))=\Gamma.
\]

Finally, if $B'=\emptyset$, then for every monic polynomial $F\in K[x]$ we write
\[
F=\prod_{l=1}^r (x-c_l)\mbox{ with }c_1,\ldots,c_l\in \overline K.
\]
Since $c_l\notin B'$, there exists $i\in I$ such that $c_l\notin B_{\overline K}(a_i, \gamma_i)$, that is, $\overline v(c_l-a_i)<\gamma_i$. This means that for $j>i$ we have
\[
\overline v(c_l-a_j)=\overline v(c_l-a_i+a_i-a_j)=\overline v(c_l-a_i)
\]
and consequently $\{\overline{v}(a_i-c_l)\}_{i\in I}$ is ultimately constant. Since
\[
v(F(a_i))=\sum_{l=1}^r\overline v(a_i-c_l),
\]
we deduce that $\{v(F(a_i))\}_{i\in I}$ is ultimately constant. This shows that $\underline{a}\in \mathbb S_{\rm al}$ and this concludes the proof.
\end{proof}

Set
\begin{displaymath}
\begin{array}{rcl}
\mathbb A_{\rm rt}&:=&\{\Appr\in \mathbb A\mid \SU(\Appr)\mbox{ has a maximum}\},\\
\mathbb A_{\rm vt}&:=&\{\Appr\in \mathbb A\mid \SU(\Appr)\mbox{ does not have a maximum, }\tilde{\underline{a}}\neq \emptyset\mbox{ and }\SU(\Appr)\neq \Gamma\},\\
\mathbb A_{\rm nt}&:=&\{\Appr\in \mathbb A\mid \SU(\Appr)\mbox{ does not have a maximum, }\tilde{\underline{a}}\neq \emptyset\mbox{ and }\SU(\Appr)= \Gamma\},\\
\mathbb A_{\rm al}&:=&\{\Appr\in \mathbb A\mid \SU(\Appr)\mbox{ does not have a maximum and }\tilde{\underline{a}}=\emptyset\}.

\end{array}
\end{displaymath}

\begin{Cor}
We have
\[
\mathbb A=\mathbb A_{\rm rt}\sqcup\mathbb A_{\rm vt}\sqcup\mathbb A_{\rm nt}\sqcup\mathbb A_{\rm al}
\]
and
\[
\iota\left(\mathbb S_{\rm rt}\right)=\mathbb A_{\rm rt},\iota\left(\mathbb S_{\rm vt}\right)=\mathbb A_{\rm vt}, \iota\left(\mathbb S_{\rm nt}\right)=\mathbb A_{\rm nt} \mbox{ and }\iota\left(\mathbb S_{\rm al}\right)=\mathbb A_{\rm al}.
\]
\end{Cor}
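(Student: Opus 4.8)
The plan is to derive the Corollary directly from the Proposition that immediately precedes it, since all the real content has already been established there. First I would observe that the decomposition $\mathbb{S}=\mathbb{S}_{\rm rt}\sqcup\mathbb{S}_{\rm al}\sqcup\mathbb{S}_{\rm vt}\sqcup\mathbb{S}_{\rm nt}$ is already known, and that the four items of the Proposition give, for each type of pseudo-Cauchy sequence $\underline a$, a characterization of $\iota(\underline a)$ purely in terms of the invariants $\SU(\iota(\underline a))$ and $\tilde{\underline a}$. Matching these characterizations against the definitions of $\mathbb{A}_{\rm rt}$, $\mathbb{A}_{\rm vt}$, $\mathbb{A}_{\rm nt}$, $\mathbb{A}_{\rm al}$ shows that $\iota(\mathbb{S}_{\rm rt})\subseteq\mathbb{A}_{\rm rt}$ and likewise for the other three pieces.

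Next I would establish that $\mathbb{A}=\mathbb{A}_{\rm rt}\sqcup\mathbb{A}_{\rm vt}\sqcup\mathbb{A}_{\rm nt}\sqcup\mathbb{A}_{\rm al}$. Disjointness is immediate from the mutually exclusive conditions defining the four subsets (having a maximum versus not; and when there is no maximum, $\tilde{\underline a}=\emptyset$ versus $\tilde{\underline a}\neq\emptyset$, the latter split further by whether $\SU(\Appr)=\Gamma$). For the covering, I would use the surjectivity of $\iota$ proved in the Proposition: given $\Appr\in\mathbb{A}$, pick $\underline a\in\mathbb{S}$ with $\iota(\underline a)=\Appr$; then $\underline a$ lies in exactly one of the four pieces of $\mathbb{S}$, and the corresponding item of the Proposition places $\Appr$ in the matching piece of $\mathbb{A}$. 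One subtlety to note here is that in the definition of $\mathbb{A}_{\rm vt}$, $\mathbb{A}_{\rm nt}$, $\mathbb{A}_{\rm al}$ the symbol $\tilde{\underline a}$ should be read as $\bigcap_{i\in I^*}B_{\overline K}(a_i,\gamma_i)$ for any $\underline a$ with $\iota(\underline a)=\Appr$; I would remark that this intersection depends only on $\Appr$ (equivalently, only on the cofinal nest of balls, since enlarging or refining a cofinal subnest does not change the intersection of the corresponding balls in $\overline K$), so the conditions are well-defined on $\mathbb{A}$.

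Finally, for the equalities $\iota(\mathbb{S}_{\rm rt})=\mathbb{A}_{\rm rt}$ etc., the inclusions $\subseteq$ were obtained in the first step. For the reverse inclusions, given $\Appr\in\mathbb{A}_{\rm rt}$ (say), surjectivity of $\iota$ gives some $\underline a$ with $\iota(\underline a)=\Appr$; by the Proposition the "if and only if" in item (i) forces $\underline a\in\mathbb{S}_{\rm rt}$, so $\Appr\in\iota(\mathbb{S}_{\rm rt})$. The same argument, using the respective biconditionals in items (ii), (iii), (iv), handles the other three cases.

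I do not expect a genuine obstacle here: the Corollary is essentially a bookkeeping consequence of the preceding Proposition together with the disjointness of the defining conditions. The only point requiring a word of care is the well-definedness of $\tilde{\underline a}$ as an invariant of $\Appr$ rather than of $\underline a$, which is why I would include the short remark about cofinal subnests inducing the same intersection in $\overline K$. Beyond that, the proof is a direct translation between the two lists of conditions.
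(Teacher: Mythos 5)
Your argument is correct and is exactly the bookkeeping the paper intends (the Corollary is stated without proof as an immediate consequence of the preceding Proposition's biconditionals and the surjectivity of $\iota$). Your extra remark that $\tilde{\underline a}$ depends only on $\Appr$ (because mutually cofinal nests of balls in $K$ give the same intersection of the corresponding balls in $\overline K$) is a worthwhile point that the paper leaves implicit.
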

For an approximation type
\[
\Appr=\{B(a_i,\gamma_i)\mid \gamma_i \in \SU(\Appr)\}
\]
we define
\[
\Phi(\Appr)=[\mu]\mbox{ where }\mu=\sup\{\nu_i\mid \gamma_i\in \SU(\Appr)\}
\]
where $\nu_i:=v_{a_i,\gamma_i}$.

\begin{Teo}\label{teoPhiAppr}
The map $\Phi:\mathbb A\lra \mathbb V$ given by $\Appr\mapsto \Phi(\Appr)$ is well-defined, injective and
\[
\Psi=\Phi\circ \iota.
\]
\end{Teo}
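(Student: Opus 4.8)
The strategy is to reduce everything to the results already established for $\Psi$, using the surjectivity of $\iota$ as the bridge. First I would verify that $\Phi$ is well-defined: given $\Appr\in\mathbb A$, the family $\{\nu_i\mid\gamma_i\in\SU(\Appr)\}$ is a totally ordered subset of $\mathcal V$ (exactly as in the paragraph before Lemma~\ref{lemaGraumenorFixed}, since the $\gamma_i$ are increasing and $v(a_i-a_j)=\gamma_i$ for $i<j$ by Lemma~\ref{lemNuaGammaMenorIgual}), and its supremum exists in $\mathcal V$ because each such family arises from a pseudo-Cauchy sequence $\underline a$ with $\iota(\underline a)=\Appr$ and we already know $\sup_{i}\nu_i$ exists by the Theorem defining $\Psi$. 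The only subtlety is independence of the choice of centers $a_i$ and of the cofinal subsequence; this follows from Lemma~\ref{lemCofinalEachOther} together with Lemma~\ref{lemNuaGammaMenorIgual}, since changing centers within the same balls or passing to a cofinal subfamily produces increasing families that are cofinal in each other, hence have the same stable polynomials and the same supremum.

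\textbf{The identity $\Psi=\Phi\circ\iota$.} This is almost immediate from the definitions. Given $\underline a=\{a_i\}_{i\in I}\in\mathbb S$, the approximation type $\iota(\underline a)=\overline{\mathcal B}$ is cofinal in $\mathcal B=\{B(a_i,\gamma_i)\}_{i\in I^*}$, so the family of monomial valuations $\{v_{a_i,\gamma_i}\mid \gamma_i\in\SU(\iota(\underline a))\}$ used to compute $\Phi(\iota(\underline a))$ is cofinal in $\{\nu_i\}_{i\in I^*}$ used to compute $\Psi(\underline a)$, and conversely. By Lemma~\ref{lemCofinalEachOther} these two increasing families have the same set of stable polynomials and agree on it, hence the same supremum in $\mathcal V$ by Proposition~\ref{lemVstable} (or by Proposition~\ref{lemSupAlgType} in the unstable case). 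Therefore $\Psi(\underline a)=[\sup_i\nu_i]=\Phi(\iota(\underline a))$. One should handle the edge case where $I^*$ has a maximum separately, but there $\SU(\iota(\underline a))$ has a maximum $\gamma_{i^*_{\max}}$, the supremum is just $\nu_{i^*_{\max}}=v_{a_{i^*_{\max}},\gamma_{i^*_{\max}}}$, and both maps return the same class.

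\textbf{Injectivity of $\Phi$ — the main obstacle.} This is the substantive part. Suppose $\Phi(\Appr)=\Phi(\Appr')$; I must show $\Appr=\Appr'$. The natural route is to recover $\Appr$ from the valuation class $\mu=\sup\{\nu_i\}$: I would show that a closed ball $B(a,\gamma)$ lies in $\Appr$ if and only if $v_{a,\gamma}\leq\mu$ (for a suitable representative $\mu\in\mathcal V$). One direction is clear, since each defining $\nu_i$ satisfies $\nu_i\leq\mu$ and, for $B(a,\gamma)\in\Appr=\overline{\Appr}$, some $B(a_i,\gamma_i)\subseteq B(a,\gamma)$, whence $v_{a,\gamma}\leq v_{a_i,\gamma_i}\leq\mu$ by Lemma~\ref{lemNuaGammaMenorIgual}. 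The reverse direction — that $v_{a,\gamma}\leq\mu$ forces $B(a,\gamma)\in\Appr$ — is where the work lies: one uses that $\mathcal V$ is a tree (Proposition~\ref{treescufutue}), so the monomial valuations below $\mu$ form a chain, and one must check this chain is exactly the set $\overline{\{\nu_i\}}$ up to the cofinality; invoking Lemma~\ref{lemNuaGammaMenorIgual}(iii) translates the order on monomial valuations into reverse inclusion of balls, so the set of balls $\{B(a,\gamma)\mid v_{a,\gamma}\leq\mu\}$ is a nest equal to its own closure, i.e.\ an approximation type, and it is cofinal in (hence by Lemma~\ref{lemUnicoAppr} equal to) $\Appr$. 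Since $\mu$ depends only on the class $\Phi(\Appr)$, this shows $\Appr$ is recovered from $\Phi(\Appr)$, giving injectivity. The delicate point to watch is the case $\SU(\Appr)=\Gamma$ with $\tilde{\underline a}\neq\emptyset$ (the $\mathbb A_{\rm nt}$ case), where $\mu$ has nontrivial support and one must argue that no ball of radius outside $\SU(\Appr)$ can satisfy $v_{a,\gamma}\leq\mu$; here the explicit description $F\in\SU(\mu)$ from the proof of the previous Corollary, together with the fact that $F$ has minimal degree, does the job.
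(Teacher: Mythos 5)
Your reductions for well-definedness and for $\Psi=\Phi\circ\iota$ are fine and agree in substance with the paper, which disposes of these points as following from the definitions; your cofinality argument via Lemma \ref{lemCofinalEachOther} is a legitimate way to make that precise (note also that by Lemma \ref{lemNuaGammaMenorIgual} the monomial valuation $v_{a,\gamma}$ depends only on the closed ball $B(a,\gamma)$, so there is really no choice of centers to worry about). The problem lies in the injectivity argument, which is the substantive part. You propose to recover $\Appr$ from $\mu=\sup\{\nu_i\}$ as $\{B(a,\gamma)\mid v_{a,\gamma}\leq\mu\}$, and you correctly isolate the hard direction, namely that $v_{a,\gamma}\leq\mu$ forces some ball of $\Appr$ to be contained in $B(a,\gamma)$. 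But at exactly this point you only assert that the chain of monomial valuations below $\mu$ ``is exactly $\overline{\{\nu_i\}}$ up to cofinality''; no argument is given, and the tree property alone does not deliver it. Proposition \ref{treescufutue} only yields that $v_{a,\gamma}$ is comparable with each $\nu_i$ (when both are strictly below $\mu$); a priori the comparison could go the wrong way for every $i$, i.e.\ $\nu_i\leq v_{a,\gamma}$ for all $i$. To exclude this you must invoke that $\mu$ is the \emph{supremum} of the family: then $v_{a,\gamma}$ would be an upper bound, so $\mu\leq v_{a,\gamma}\leq \mu$, i.e.\ $v_{a,\gamma}=\mu$; and one must then dispose of this equality case when $\SU(\Appr)$ has no maximum, e.g.\ by observing that there $\mu$ is valuation-algebraic, value-transcendental or has nontrivial support (Corollary \ref{CorVstableAlgebraic}, Proposition \ref{lemSupAlgType} and the cut construction), whereas $v_{a,\gamma}$ with $\gamma\in\Gamma_v$ is residue-transcendental by Lemma \ref{lemqnSobrecTransc1}. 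None of this appears in your sketch, and your ``delicate point'' is aimed at the wrong place: in the $\mathbb A_{\rm nt}$ case one has $\SU(\Appr)=\Gamma$, so there are no radii outside the support to exclude; the genuine issue is the one just described, and it is uniform across all four cases.

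For comparison, the paper's proof avoids this step by comparing $\Appr$ and $\Appr'$ directly rather than reconstructing $\Appr$ from $\mu$. When the supports have no maximum, it takes $B(a,\gamma)\in\Appr$, chooses $B(a',\gamma')\in\Appr'$ with $\gamma'>\gamma$, notes that $v_{a,\gamma}$ and $v_{a',\gamma'}$ both lie strictly below the common supremum $\mu$ and hence are comparable by Proposition \ref{treescufutue}, and rules out $v_{a',\gamma'}\leq v_{a,\gamma}$ because by Lemma \ref{lemNuaGammaMenorIgual} and Lemma \ref{lemasobrebola} this would give $B(a,\gamma)\subseteq B(a',\gamma')$ and $\gamma\geq\gamma'$, a contradiction; thus $B(a',\gamma')\subseteq B(a,\gamma)$ and $B(a,\gamma)\in\overline{\Appr'}=\Appr'$ (the case where $\SU(\Appr)$ has a maximum is handled separately using the upward closedness of $\Appr'$). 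The strict increase of the radius plays the role that the supremum property would have to play in your argument. Your route can certainly be completed along the lines indicated above, but as written the key step is missing.
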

\begin{proof}
It follows from the definitions that $\Phi$ is well-defined and $\Psi=\Phi\circ \iota$. In order to show that $\Phi$ is injective take two approximation types $\Appr$ and $\Appr'$ such that $\Phi(\Appr)=\Phi(\Appr')$. We will show that $\Appr=\Appr'$.

If $\SU(\Appr)$ admits a maximum $\gamma$ ($\Appr_\gamma=B(a,\gamma)$), then
\[
\Phi(\Appr)=[v_{a,\gamma}].
\]
Since $\Phi(\Appr)=\Phi(\Appr')$, this can only happen if $B(a,\gamma)\in \Appr'$. Since $\overline {\Appr'}=\Appr'$, we deduce that $\Appr\subseteq \Appr'$. If $\Appr\neq \Appr'$, then there would exist $B(a',\gamma')\in \Appr'$ with $B(a',\gamma')\subsetneq B(a,\gamma)$. By Lemma \ref{lemNuaGammaMenorIgual} this would imply that
\[
v_{a,\gamma}<v_{a',\gamma'}\leq \sup\{v_{a_i,\gamma_i}\mid \gamma_i\in \SU(\Appr')\}
\]
and this is a contradiction to $\Phi(\Appr)=\Phi(\Appr')$.

Assume now that $\SU(\Appr)$ does not admit a maximum. Hence $\SU(\Appr')$ also does not have a maximum. Take $B(a,\gamma)\in \Appr$. Then $\gamma\in \SU(\Appr')$ and consequently, there exists $\gamma'\in \SU(\Appr')$ with $\gamma'>\gamma$. Set
\[
\Appr'_{\gamma'}=B(a',\gamma').
\]
For
\[
\mu:=\sup\{v_{a_\gamma,\gamma}\mid \gamma\in \SU(\Appr)\}=\sup\{v_{a_{\gamma'},\gamma'}\mid \gamma'\in \SU(\Appr')\},
\]
Proposition \ref{treescufutue} implies that
\begin{equation}\label{treeodrtetola}
\left]-\infty, \mu\right[\mbox{ is totally ordered.}
\end{equation}
This and Lemma \ref{lemasobrebola} \textbf{(iv)} imply that $v_{a',\gamma'}\geq v_{a,\gamma}$. By Lemma \ref{lemNuaGammaMenorIgual} we deduce that
\[
\Appr_\gamma=B(a,\gamma)\supseteq B(a',\gamma')=\Appr'_{\gamma'}.
\]
Since $\Appr'$ is an approximation type, this implies that $B(a,\gamma)\in \Appr'$. Hence $\Appr\subseteq \Appr'$. The other inclusion follows by a symmetric reasoning.
\end{proof}
\subsection{The case $K=\overline K$}
Given a valuation $\mu$ on $K(x)$, the set
$$\apprv := \{  B\cap K\mid B=B(x, \gamma)\in \mathfrak{B}(K(x),\nu) , \gamma\in \Gamma_v  \} $$
is an approximation type over $(K,v)$ (\cite[Lemma 3.4]{KuhlmannApprTypes}). 

If we consider $[\nu]\in \mathbb{V}$, one can show  that the definition of $\apprv$  is independent of the choice of the representative.

\begin{Prop}\cite[Theorems 1.2 and 1.3]{KuhlmannApprTypes} \label{TeoKuhlmannApprTypesSurjective}
The map 
\begin{align*}
		\mathbb{V} &\longrightarrow \mathbb{A}\\
		[\nu] &\longmapsto \apprv
\end{align*}
is surjective. If $K$ is algebraically closed, then this mapping is bijective. 
\end{Prop}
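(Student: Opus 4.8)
The plan is to identify the map $[\nu]\mapsto\mathrm{appr}_\nu(x,K)$ of the statement with an inverse of the injective map $\Phi\colon\mathbb A\to\mathbb V$ from Theorem~\ref{teoPhiAppr}. Write $\Theta\colon\mathbb V\to\mathbb A$ for this map; it is well defined by the remark preceding the statement. The crucial step is to prove
\[
\Theta\circ\Phi=\mathrm{id}_{\mathbb A}.
\]
Granting this, surjectivity of $\Theta$ is immediate, since every $\Appr\in\mathbb A$ equals $\Theta(\Phi(\Appr))$ and hence lies in the image. For the case $K=\overline K$ one argues in addition that $\Phi$ is bijective: it is injective by Theorem~\ref{teoPhiAppr}, and it is surjective because $\Psi$ is surjective by Proposition~\ref{pseudcaucqysufec} and $\Psi=\Phi\circ\iota$; combined with $\Theta\circ\Phi=\mathrm{id}_{\mathbb A}$ this forces $\Theta=\Phi^{-1}$, which is therefore bijective.

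It remains to establish $\Theta\circ\Phi=\mathrm{id}_{\mathbb A}$. Fix an approximation type $\Appr=\{\Appr_\gamma\mid\gamma\in\SU(\Appr)\}$, write $\Appr_\gamma=B(a_\gamma,\gamma)$, and let $\mu=\sup\{v_{a_\gamma,\gamma}\mid\gamma\in\SU(\Appr)\}$ be the representative of $\Phi(\Appr)$ coming from the definition of $\Phi$. The computation to carry out is that, for $B(x,\gamma)$ the closed ball of radius $\gamma\in\Gamma_v$ around $x$ with respect to $\mu$,
\[
B(x,\gamma)\cap K=
\begin{cases}
\Appr_\gamma & \text{if }\gamma\in\SU(\Appr),\\
\emptyset & \text{if }\gamma\notin\SU(\Appr).
\end{cases}
\]
For $\gamma\in\SU(\Appr)$ one uses $\mu(x-a_\gamma)\geq v_{a_\delta,\delta}(x-a_\gamma)\geq\gamma$ for the relevant $\delta\geq\gamma$ together with the ultrametric inequality applied to $x-b=(x-a_\gamma)+(a_\gamma-b)$: then $\mu(x-b)\geq\gamma$ holds precisely when $v(a_\gamma-b)\geq\gamma$. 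For $\gamma\notin\SU(\Appr)$, note that $\SU(\Appr)$ is an initial segment of $\Gamma_v$; if it has a maximum $\gamma_0$ then $\mu=v_{a_{\gamma_0},\gamma_0}$ and the claim is clear, while if it has no maximum then, since $\mu(x-b)=\sup_\delta\min\{\delta,v(a_\delta-b)\}$ over a cofinal family in $\SU(\Appr)$, this value is either an element of $\SU(\Appr)$ or the cut determined by $\SU(\Appr)$ (in the sense of Section~\ref{SecV}), hence in both cases strictly smaller in $\Lambda$ than $\gamma$. Since an approximation type is determined by the collection $\{\Appr_\gamma\mid\gamma\in\SU(\Appr)\}$ of its balls, this gives $\mathrm{appr}_\mu(x,K)=\Appr$.

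The main obstacle is the case in which $\Phi(\Appr)$ has nontrivial support (the case $\gamma=\infty$ in the notation of Section~\ref{PseudoCauchySeq}): then $\mu$ is not literally a valuation on $K(x)$ and $B(x,\gamma)$ has to be read correctly. One handles this by realizing $\mu$ inside a simple algebraic extension $K(c)$, with $c$ a root of the associated minimal polynomial $F$ and $x$ identified with $c$; then $B(x,\gamma)\cap K$ becomes $B_{\overline K}(c,\gamma)\cap K$, and since $c\in\bigcap_\gamma B_{\overline K}(a_\gamma,\gamma)$ we get $B_{\overline K}(c,\gamma)\cap K=B(a_\gamma,\gamma)=\Appr_\gamma$, so the same conclusion holds. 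For $K=\overline K$ one may alternatively avoid the ball computation altogether by using the identity $v_{a,\gamma}(f)=\min_{b\in B(a,\gamma)}v(f(b))$ recalled in the introduction.
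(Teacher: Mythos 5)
The paper does not actually prove this proposition: it is imported from Kuhlmann (Theorems 1.2 and 1.3 of \cite{KuhlmannApprTypes}), so there is no internal proof to compare yours against. Your argument is therefore a genuinely different, self-contained route built on the paper's own machinery: writing $\Theta$ for the map $[\nu]\mapsto \apprv$ of the statement, you verify $\Theta\circ\Phi=\mathrm{id}_{\mathbb A}$, which gives surjectivity of $\Theta$ over an arbitrary $(K,v)$, and for $K=\overline K$ you combine this with injectivity of $\Phi$ (Theorem \ref{teoPhiAppr}) and surjectivity of $\Psi=\Phi\circ\iota$ (Proposition \ref{pseudcaucqysufec}) to get $\Theta=\Phi^{-1}$. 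None of these ingredients rely on the cited result, so there is no circularity; in fact your identity $\Theta\circ\Phi=\mathrm{id}_{\mathbb A}$ is precisely the ``second part'' that the paper's final proposition leaves as ``easy to check''. A further benefit of your route is that it proves the statement for the notion of approximation type used in this paper (nests of closed balls only), whereas invoking Kuhlmann's theorems literally requires a translation, since his approximation types also involve open balls.

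Two steps deserve more care. First, in the case $\gamma\notin\SU(\Appr)$ with $\SU(\Appr)$ having no maximum, you use $\mu(x-b)=\sup_\delta\min\{\delta,v(a_\delta-b)\}$; this is correct but needs justification: a linear polynomial is either $\mathfrak{v}$-stable, in which case Lemma \ref{lemMenorIgualMu} identifies $\mu(x-b)$ with the stable value (which lies in $\SU(\Appr)$), or it is unstable of minimal degree, in which case the explicit formula for $\mu_{Q,\gamma}$ from Section \ref{Secincreafamil} shows the value is the cut element $x_\delta$; in both cases the value is bounded above by the cut determined by $\SU(\Appr)$, hence strictly smaller than $\gamma$. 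Second, in the nontrivial-support case you choose a root $c$ of $F$ and assert $c\in\bigcap_\gamma B_{\overline K}(a_\gamma,\gamma)$; this holds only if $c$ (equivalently, the extension of $v$ to $K(c)$) is chosen compatibly with $\mu$, i.e.\ one must work with the valuation induced by $\mu$ on $K[x]/(F)$ rather than with an arbitrary root under a previously fixed $\overline v$. The cleaner fix is to avoid $\overline K$ entirely: the only set ever needed is $\{b\in K\mid \mu(x-b)\geq\gamma\}$, which makes sense for $\mu$ as a valuation on $K[x]$ regardless of its support, so your main computation applies verbatim; note also that in this case $\SU(\Appr)=\Gamma_v$ (this is part of the definition of $\mathbb A_{\rm nt}$), so the empty case does not occur there.
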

\begin{Prop}
If $K$ is algebraically closed, then $\Phi$ is a bijection. Moreover, its inverse is the map in Proposition \ref{TeoKuhlmannApprTypesSurjective}.
\end{Prop}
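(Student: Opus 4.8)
The plan is to show that $\Phi$ is surjective when $K=\overline K$ (we already know from Theorem~\ref{teoPhiAppr} that it is injective), and simultaneously identify the inverse with the map $[\nu]\mapsto\apprv$ of Proposition~\ref{TeoKuhlmannApprTypesSurjective}. Since that map is already known to be a bijection when $K$ is algebraically closed, it suffices to prove one composite identity, say that $\Phi\bigl(\apprv\bigr)=[\nu]$ for every $[\nu]\in\mathbb V$; injectivity of $\Phi$ then forces $\Phi$ and $[\nu]\mapsto\apprv$ to be mutually inverse bijections. So the whole proof reduces to the single computation: given a valuation $\nu$ on $K[x]$ extending $v$, with approximation type $\Appr=\apprv=\{B(a_\gamma,\gamma)\mid\gamma\in\SU(\Appr)\}$, show that $\nu$ is equivalent to $\sup\{v_{a_\gamma,\gamma}\mid\gamma\in\SU(\Appr)\}$.

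First I would recall the concrete description of $\apprv$: for each $\gamma\in\Gamma_v$ the ball $B(x,\gamma)\cap K=\{c\in K\mid \nu(x-c)\geq\gamma\}$, and $\gamma\in\SU(\Appr)$ precisely when this set is nonempty, i.e.\ when there is $a_\gamma\in K$ with $\nu(x-a_\gamma)\geq\gamma$; in that case $B(a_\gamma,\gamma)=\{c\in K\mid v(c-a_\gamma)\geq\gamma\}$ is the unique ball of radius $\gamma$ in $\Appr$. The next step is to compare $\nu$ with each $v_{a_\gamma,\gamma}$. Because $K=\overline K$, both valuations are determined by their values on linear polynomials $x-b$, $b\in K$ (every polynomial factors into linear terms, and both $\nu$ and a monomial valuation respect products on such factorizations — for $\nu$ this uses that $\SU(\nu)$ is generated by at most one irreducible, handled separately). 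For a fixed $\gamma\in\SU(\Appr)$ and arbitrary $b\in K$, I would show $v_{a_\gamma,\gamma}(x-b)=\min\{\gamma,v(b-a_\gamma)\}\leq\nu(x-b)$: indeed $\nu(x-b)\geq\min\{\nu(x-a_\gamma),v(a_\gamma-b)\}\geq\min\{\gamma,v(a_\gamma-b)\}$. Hence $v_{a_\gamma,\gamma}\leq\nu$ for every $\gamma\in\SU(\Appr)$, so $\mu:=\sup_\gamma v_{a_\gamma,\gamma}\leq\nu$ and we already know $\mu\in\mathcal V$.

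For the reverse inequality $\nu\leq\mu$, one again reduces to linear polynomials. Fix $b\in K$ and set $\beta:=\nu(x-b)$. If $\beta\in\Gamma_v$ (more precisely, if the increasing family $\{v(g(a_i))\}$ stabilizes — equivalently $x-b$ is ``fixed'' by the associated pseudo-Cauchy data), then $\beta\in\SU(\Appr)$ and $b$ itself can serve as a center $a_\beta$, giving $v_{b,\beta}(x-b)=\beta=\nu(x-b)$ with $v_{b,\beta}\leq\mu$, so $\nu(x-b)\leq\mu(x-b)$. If instead $\beta=\sup\SU(\Appr)\notin\SU(\Appr)$ (the value- or valuation-algebraic/transcendental situation, where $\SU(\Appr)$ has no maximum and $b$ lies in every ball $B(a_\gamma,\gamma)$), then for every $\gamma\in\SU(\Appr)$ we have $v(b-a_\gamma)\geq\gamma$, hence $v_{a_\gamma,\gamma}(x-b)=\min\{\gamma,v(b-a_\gamma)\}=\gamma$, so $\mu(x-b)=\sup_\gamma\gamma=\beta=\nu(x-b)$. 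In the remaining case $\nu(x-b)<\gamma$ for some $\gamma\in\SU(\Appr)$, then $\nu(x-b)=\nu(x-a_\gamma)-$ wait, more carefully $v(b-a_\gamma)=\nu(x-b)$ since $\nu(x-b)<\gamma\leq\nu(x-a_\gamma)$, and then $v_{a_\gamma,\gamma}(x-b)=v(b-a_\gamma)=\nu(x-b)$, so again $\nu(x-b)\leq\mu(x-b)$. Combining, $\nu\leq\mu$ on linear polynomials, hence (by the factorization argument, treating $\SU(\nu)$ separately) $\nu\leq\mu$ everywhere, so $\nu=\mu$ and $\Phi(\apprv)=[\nu]$.

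The main obstacle I anticipate is the reduction from linear polynomials to arbitrary polynomials when $\SU(\nu)\neq 0$: if $\nu$ has nontrivial support $fK[x]$ with $f$ irreducible, then $f$ need not split over $K$ unless one passes to $\overline K$ — but $K=\overline K$ here, so $\SU(\nu)=(x-b)K[x]$ for a single $b\in K$, and one must check that $\mu$ also has this support, i.e.\ that $\mu(x-b)=\infty$, which corresponds exactly to $\SU(\Appr)=\Gamma_v$ and $b\in\bigcap_\gamma B(a_\gamma,\gamma)$ (the $\mathbb A_{\rm nt}$ case of the earlier Proposition). Once this bookkeeping between the four types of $\Appr$ and the four types of $\nu$ is in place — and it is essentially already done in the earlier classification results of this section — the argument is a short and clean verification on linear polynomials as sketched above.
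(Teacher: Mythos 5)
Your overall strategy is genuinely different from the paper's: the paper deduces bijectivity of $\Phi$ from surjectivity of $\Psi$ (Proposition \ref{pseudcaucqysufec}) together with $\Psi=\Phi\circ\iota$ and injectivity of $\Phi$ (Theorem \ref{teoPhiAppr}), and treats the identification of the inverse as a routine check, whereas you reduce everything to the single identity $\Phi({\rm appr}_\nu(x,K))=[\nu]$. The reduction itself is sound, and your forward inequality $v_{a_\gamma,\gamma}\leq\nu$ is fine; the gap is in the case analysis for $\nu\leq\mu$ on linear polynomials, and it is not a cosmetic omission. Let $\gamma_0\in\Gamma_v$ and let $\beta\in\Lambda$ be the element realizing the cut of $\Gamma_\Q$ whose left set is $(\Gamma_\Q)_{\leq\gamma_0}$ (it exists, since $\Gamma_{\rm sme}$ is order-isomorphic to ${\rm Qcuts}(\Gamma_\Q)$); thus $\gamma_0<\beta<\alpha$ for every $\alpha\in\Gamma_\Q$ with $\alpha>\gamma_0$. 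Take $\nu=v_{b,\beta}$, a value-transcendental element of $\mathbb V$. Then $\nu(x-c)=\min\{\beta,v(c-b)\}$, so $\SU({\rm appr}_\nu(x,K))=(\Gamma_v)_{\leq\gamma_0}$, which \emph{has} the maximum $\gamma_0$, with balls $B(b,\delta)$, $\delta\leq\gamma_0$. For $x-b$ we have $\beta=\nu(x-b)\notin\Gamma_v$, yet we are in none of your three cases ($\beta\in\Gamma_v$; $\beta=\sup\SU(\Appr)$ with $\SU(\Appr)$ without maximum; $\nu(x-b)<\gamma$ for some $\gamma\in\SU(\Appr)$). Here $\Phi({\rm appr}_\nu(x,K))=[v_{b,\gamma_0}]$, which is residue-transcendental, so the composite identity fails for this $\nu$. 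More structurally, any monomial valuation $v_{a,\delta}$ with $\delta\in\Gamma_v$ satisfying $v_{a,\delta}\leq v_{b,\beta}$ already satisfies $\delta\leq\gamma_0$ and $v(a-b)\geq\delta$, hence $v_{a,\delta}\leq v_{b,\gamma_0}$ by Lemma \ref{lemNuaGammaMenorIgual}; so no supremum of a family of such monomial valuations can equal $v_{b,\beta}$, i.e.\ such classes are not in the image of $\Phi$ at all.

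The same example shows that $[v_{b,\gamma_0}]$ and $[v_{b,\beta}]$ have the same closed-ball approximation type, so the injectivity half of Proposition \ref{TeoKuhlmannApprTypesSurjective} that you lean on is not available verbatim in this paper's setting: Kuhlmann's bijection is proved for his approximation types, which also record the ``open'' balls (equivalently, balls whose radii lie in cuts of $\Gamma_v$), and that is exactly the data distinguishing $v_{b,\gamma_0}$ from $v_{b,\beta}$. Note that the paper's own route runs into the same obstruction: in the proof of Proposition \ref{pseudcaucqysufec}, in the case where ${\rm d}(x,K)$ has a maximum $\gamma\notin\Gamma$ whose lower cut set has a largest element, the cofinal sequence constructed there does not return $[\nu]$ under $\Psi$. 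So the missing case in your trichotomy is precisely where the statement becomes delicate; to close the gap one would have to either enrich $\mathbb A$ so that it remembers this extra (open-ball) information, or restrict the class of valuations for which the identity $\Phi({\rm appr}_\nu(x,K))=[\nu]$ is claimed, and your write-up should flag this rather than treat it as bookkeeping.
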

\begin{proof}
That $\Phi$ is a bijection follows from Proposition \ref{pseudcaucqysufec} and Proposition \ref{teoPhiAppr}. The second part is easy to check.
\end{proof}


\begin{thebibliography}{99}
%%\bibitem{Naart} M. Alberich-Carrami\~nana, A. Fern\'andez Boix, J. Fern\'andez, J. Gu\`ardia, E. Nart, J. Ro\'e, \textit{Invariants of
%%limit key polynomials}, arXiv:2005.04406v1, 2020.
\bibitem{popescu2} V. Alexandru, N. Popescu and A. Zaharescu, \emph{A theorem of characterization of residual transcendental extensions of a valuation}, J. Math. Kyoto Univ.  \textbf{v. 28}  n. 4 (1988),579--592.
%
%\bibitem{popescu3} ALEXANDRU V.; POPESCU N.; ZAHARESCU, A. Minimal pairs of definition of a residual transcendental extension of a valuation. \textbf{J. Math. Kyoto Univ.} v. 30, n. 2, pp. 207-225, 1990.
%
%\bibitem{popescu4} V. Alexandru, N. Popescu, A. Zaharescu, All valuations on K(x), J. Math. Kyoto Univ. 30 (1990), 281-296.
%%
%\bibitem{Andrei} BENGUS-LASNIER, A., Minimal Pairs, Truncation and Diskoids. arxiv.org/abs/2012.07780.
%
%

\bibitem{NartTree}M. Alberich-Carrami$\tilde{\mbox{n}}$ana,  J. Gu\`ardia, E. Nart, J. Ro\'e, \emph{Valuative trees of valued fields}, J. Algebra {\bf 614} (2023), 71--114.


%\bibitem{novbarnabeAntigo}  M. dos Santos Barnabé, J. Novacoski, Newton polygons associated to truncated valuations, 2021. 	arXiv:2106.05765 
%

\bibitem{novbarnabe}  M. S. Barnabé and J. Novacoski, \emph{Valuations on $K[x]$ approaching a fixed irreducible polynomial}, J. Algebra \textbf{592} (2022), 100--117.

\bibitem{Andrei} A. Bengus-Lasnier, \emph{Minimal pairs, truncation and diskoids}, J. Algebra \textbf{579} (2021), 388-427.

\bibitem{Eng} A. Engler and A. Prestel,  Valued Fields.  Springer-Verlag, 2005. 205 p.

\bibitem{Kapl} I. Kaplansky, \emph{Maximal fields with valuations I}, Duke Math. Journ. \textbf{9}
(1942), 303--321.

\bibitem{KuhlmannApprTypes} F.-V. Kuhlmann, \emph{Approximation types describing extensions of valuations to rational function fields}, Math. Z. {\bf 301} (2022), 2509--2546.

\bibitem{KuhlmannNart} F-V. Kuhlmann and E. Nart, \emph{Cuts and small extensions of abelian ordered groups}, J. Pure Appl. Algebra \textbf{226} (2022).


\bibitem{MacLane} S.  MacLane, \emph{A construction for absolute values in polynomial rings}, Trans. Amer. Math. Soc. \textbf{40} 3 (1936), 363--395.


\bibitem{Rig} E. Nart, \emph{Rigidity of valuative trees under henselization}, Pacific J. Math. {\bf 319} (2022), 189--211.

\bibitem{NartJosnei} E. Nart and J. Novacoski, \emph{Geometric parametrization of valuations on a polynomial ring in one variable}, Math. Z. \textbf{304} (2023), 59. 


\bibitem{josneimonomial} J. Novacoski, \emph{On MacLane-Vaqui\'e key polynomials}, J. Pure Appl. Algebra \textbf{225} (2021), 106644.

\bibitem{josneicaio2} J. Novacoski, C. H. Silva de Souza and M. Spivakovsky, \emph{Graded rings associated to valuations and direct limits}, J. Pure Appl. Algebra \textbf{227} (2023), 107296.


\bibitem{Ost} A. Ostrowski, \emph{Untersuchungen zur arithmetischen Theorie der K\"orper}, Math. Z. \textbf{39} (1935), 269--404.

\bibitem{Per} G. Peruginelli and D. Spirito, \emph{Extending valuations to the field of rational functions using pseudo-monotone sequences}, J. Algebra \textbf{586} (2021), 756--786.


\bibitem{Vaq}   M. Vaquié, \emph{Extension d'une valuation}, Trans. Amer. Math. Soc.  \textbf{359} 7 (2007), 3439--3481.

\bibitem{Vaq2} M. Vaqui\'e, \emph{Famille admissible de valuations et defaut d'une extension}, J. Algebra \textbf{359} no. 2 (2007), 859--876.
\bibitem{zar} O. Zariski; P. Samuel.  Commutative Algebra II. Springer-Verlag Berlin Heidelberg, 1960. 416 p.

\end{thebibliography}
\end{document}